    \renewcommand{\leq}{\leqslant}
    \renewcommand{\geq}{\geqslant}
\theoremstyle{plain}
\newtheorem{thm}{Theorem}[section]
\newtheorem{dfn}[thm]{Definition}
\newtheorem{prop}[thm]{Proposition}
\newtheorem{cor}[thm]{Corollary}
\newtheorem{ex}[thm]{Example}
\newtheorem{conge}[thm]{Conjecture}
\theoremstyle{plain}
\newtheorem{oss}[thm]{Remark}
\DeclareMathOperator{\spn}{span} 
\DeclareMathOperator{\inv}{\mathrm{inv}}
\DeclareMathOperator{\pfl}{\mathrm{Fl_P}}
\DeclareMathOperator{\gl}{\mathrm{GL}}
\DeclareMathOperator{\gr}{\mathrm{Gr}}
\DeclareMathOperator{\cs}{\mathrm{Conf}}
\DeclareMathOperator{\inve}{\mathrm{inv}}
\DeclareMathOperator{\pos}{\mathrm{POS}}
\DeclareMathOperator{\id}{\mathrm{Id}}
\DeclareMathOperator{\aut}{\mathrm{Aut}}
\DeclareMathOperator{\F}{\mathbb{F}}
\DeclareMathOperator{\R}{\mathbb{R}}
\DeclareMathOperator{\C}{\mathbb{C}}
\DeclareMathOperator{\N}{\mathbb{N}}
\DeclareMathOperator{\JP}{\mathcal{J}(P)}
\DeclareMathOperator{\JQ}{\mathcal{J}(Q)}
\DeclareMathOperator{\End}{\mathrm{End}}
\DeclareMathOperator{\soc}{\mathrm{soc}}
\begin{document}

\begin{center}

{\Large \bf P-flag spaces and incidence stratifications}

\vspace{1cm}

 Davide Bolognini\footnote{Dipartimento di Matematica,
Universit\`a di Bologna, Bologna, Italy,
{\em davide.bolognini.cast@gmail.com}}
$\,$ Paolo Sentinelli\footnote{Dipartimento di Matematica,
Politecnico di Milano, Milan, Italy, {\em paolosentinelli@gmail.com}},

\end{center}

 \vspace{0.5cm}

\begin{abstract}
For any finite poset $P$, we introduce a homogeneous space as a quotient of the general linear group. When $P$ is a chain this quotient
is a complete flag variety. Moreover, we provide partitions for any set in a projective space, induced by the action of incidence groups of
posets. Our general framework allows to deal with several combinatorial and geometric objects, unifying and extending different structures such as Bruhat orders, parking functions and weak orders on matroids. We introduce the notion of $P$-flag matroid, extending flag matroids.
\end{abstract}

 \vspace{0.5cm}

\section{Introduction}

Flag varieties are classical homogeneous spaces, studied from several different points of view. They parameterize the flags of $V$, i.e.\ sequences of subspaces $V_1 \subseteq \ldots \subseteq V_n=V$, where $V$ is an $n$-dimensional $\F$-vector space and $V_i \subseteq V$ is an $i$-dimensional subspace of $V$. They can be obtained as quotients of the general linear group $GL(n,\F)$ with the subgroup $B$ of invertible upper triangular matrices. The action of $B$ on the flags gives rise to a partition into {\em Schubert cells}. Their Zariski closures are the so-called {\em Schubert varieties}, which are in bijection with the symmetric group $S_n$. The {\em Bruhat order} on $S_n$ is the poset of Schubert varieties ordered by inclusion. Similar facts hold for the Grassmannian $\gr_{\F}(k,n)$, replacing $S_n$ with the subset $S_n^{(k)}$ of Grassmannian permutations.

In this article we introduce a new class of homogeneous spaces, namely the quotients $\pfl(\F):=\gl(n,\F)/I^*(P;\F)$, where $I^*(P;\F)$ is the so-called \emph{incidence group}
of a poset $P=\left(\{1,\ldots,n\},\leqslant_P\right)$, i.e. the group of invertible elements of the incidence algebra of $P$.
Since the Borel subgroup $B$ is the group $I^*(c_n;\F)$, where $c_n$ is the chain on $n$ elements, we recover classical flag varieties.

In Definition \ref{complete flag} we introduce $P$-flags in $V$. They are tuples $(V_1,\cdots,V_n)$ of vector subspaces of $V$, satisfying, among others, the following properties (see Proposition \ref{proprieta bandiere}):
\begin{itemize}
    \item $V_i \subseteq V_j$ if and only if $i \leqslant_P j$;
    \item $\mathrm{dim}(V_i)=|i^\downarrow|$, where $i^\downarrow=\left\{j\in [n]: j\leqslant_P i\right\}$.
\end{itemize}
We prove that $\pfl(\F)$ is a homogeneous space parametrizing $P$-flags in $V$ (Theorem \ref{fl omogeneo}).
For this reason, we call $\pfl(\F)$ the $P$-{\emph{flag space} over $\F$.
The elements of these spaces are certain spanning subspace configurations, see Remark \ref{oss spanning configurations}.

The second main contribution of this paper is a new tool to obtain finite partitions of any subset $X$ of a projective space, introducing the notion of {\em incidence stratifications}, see Definition \ref{def incidence strat}. In fact, an incidence group $I^*(Q;\F)$, where $Q$ is a poset of cardinality $n$, acts on the projective space $\mathbb{P}(\F^n)$ by left multiplication; the orbits of this action  are indexed by non-empty order ideals of $Q$ (Theorem \ref{pspazio}).

This general framework allows to deal with several combinatorial and geometric objects, unifying and extending different structures such as Bruhat orders, parking functions and weak orders on matroids.

For the Grassmannian $\gr_{\F}(k,n) \hookrightarrow \mathbb{P}\left(\bigwedge^kV\right)$, we consider a suitable poset $Q^k_<$ (Definition \ref{ordine k}) to realize an incidence stratification. In this setting, for $Q=c_n$, we recover the classical Schubert cells (Proposition \ref{celle classiche GR}). When $Q=t_n$, the incidence group $I^*((t_n)^k_<;\F)$ is a maximal torus and we obtain the matroid strata introduced in \cite{Gelfand} and studied, e.g. in  \cite{Torus GP}, \cite{Ford}, \cite{Sturmfels}, \cite{transcendence degree}. See also \cite[Section~2.4]{orientedMat} and references there.

We introduce a poset $Q^P$ (Definition \ref{def Q alla P}) to provide an incidence stratification of the $P$-flag space $\pfl(\F) \hookrightarrow \mathbb{P}\left(\bigotimes\limits_{i=1}^n\bigwedge^{|i^{\downarrow}_P|}V \right)$. In this way the Schubert stratification of a flag variety is recovered, see Proposition \ref{celle classiche F}.

One more contribution is the construction, for $\pfl(\F)$, of the $Q$-\emph{Bruhat poset}, whose elements are order ideals of $Q^P$ (Definition \ref{QP Bruhat order}).
In a completely combinatorial way, we obtain the Bruhat order of $S_n$ as the $c_n$-Bruhat order on the classical flag variety, see Proposition \ref{corollario iniezione2}.
The study of the $t_n$-Bruhat poset of $\pfl(\F)$ is one motivation to introduce $P$-\emph{flag matroids} (Definition \ref{definizione P flag matroid}), extending flag matroids.
The representable ones (Definition \ref{def P-flag matroid rep}) determine the $t_n$-stratification of $\pfl(\F)$ (Corollary \ref{cor P-flag matroidi}).
In general, we believe that a $Q$-Bruhat poset is graded, see Conjecture \ref{congettura 2}.

The paper is organized as follows: 

\begin{itemize}
\item In Section \ref{sezione preliminari} we fix notation and we recall useful facts concerning symmetric groups, incidence algebras and matroids. Several classical topics
overviewed in the section are extended in this paper.

\item In Section \ref{sezione P-flag} we introduce $P$-flags in a vector space (Definition \ref{complete flag}). We prove that $P$-flags are parameterized by a homogeneous space $\pfl(\F)$ (Theorem \ref{fl omogeneo}), which for $\F=\mathbb{R}$ is a differentiable manifold (Corollary \ref{dimensione}).

    In Theorem \ref{canonical}, we describe as homogeneous spaces some of the orbits of the action of $I^*(Q,\F)$ on $P$-flags, where $Q$ is any poset of cardinality $n$.
     This shows that also classical Schubert cells are homogeneous spaces, where the isotropy subgroups are the incidence groups of posets whose Hasse diagrams are the graphs introduced in \cite{Forest-like permutations}, see Remark \ref{oss forest-like}.


\item Section \ref{sezione grassmanniane} and Section \ref{sezione strati pflag} are devoted to the study of incidence stratifications of Grassmannians $\gr_{\F}(k,n)$ and $P$-flag spaces. First we provide full information about the orbits (and their Zariski closures) of the action of $I^*(Q,\F)$ on $\mathbb{P}(V)$ (Theorem \ref{pspazio}). Then we characterize $Q$-Schubert cells in both cases, indexing them with order ideals in suitable posets; the characterization is given in terms of representable matroids (Theorem \ref{teorema matroidi}) and sets represented by $P$-flags (Definition \ref{def flag repr} and Theorem \ref{teorema P-matroidi bandiera}). We introduce the $Q$-Bruhat posets of $\gr_{\F}(k,n)$  and $\pfl(\F)$ (Definitions \ref{Q Bruhat} and \ref{QP Bruhat order}). The $c_n$-Bruhat orders coincide with the Bruhat order on $S_n^{(k)}$ and $S_n$, respectively (Propositions
    \ref{corollario iniezione} and \ref{corollario iniezione2}).
     On the other hand, the $t_n$-Bruhat order of $\gr_{\F}(k,n)$ is the so-called weak order on representable matroids of rank $k$, see Remark \ref{oss weak order matroidi}.

The last part of the paper describes the $c_n$-stratification of $\mathrm{Fl}_{t_n}(\F)$ in terms of (dual) parking functions
(Theorem \ref{proposizione parking}).
\end{itemize}

\section{Notation and preliminaries} \label{sezione preliminari}
In this section we fix notation and recall some definitions useful for the rest of the paper. We refer to \cite{Incidence algebras} and \cite{StaEC1} for posets and their incidence algebras, to \cite{BB} and  \cite{humphreysCoxeter} for the theory of Coxeter groups, to \cite{orientedMat}, \cite{coxeter matroids} and \cite{oxley} for matroids and flag matroids, to \cite{brion}, \cite{LakshmibaiGrassmannian}, \cite{Interplay},  and \cite{indiano} for general results on Grassmannians and flag varieties.

Let $\mathbb{N}$ be the set of non-negative integers. For $n\in \mathbb{N}\setminus \left\{0\right\}$, we use the notation $[n]:=\left\{1,2,\ldots,n\right\}$. For a finite set $X\neq \varnothing$, we denote by $|X|$ its cardinality, by $\mathcal{P}(X)$ its power set, by $X^n$ its $n$-th power under Cartesian product and we let $X^0:=\left\{()\right\}$.
If $x\in X^n$, we denote by $x_i$ the projection of $x$ on the $i$-th factor.
The $q$-analog of $n$ is a polynomial defined by $[n]_q:=\sum\limits_{i=0}^{n-1}q^i$;
the $q$-analog of the factorial is the polynomial $[n]_q!:=\prod\limits_{k\in [n]}[k]_q$.
Let $k\in \mathbb{N}$ with $k\leqslant n$. We define the set $$[n]_<^k:=\left\{(x_1,\ldots,x_k) \in [n]^k:x_1<x_2<\ldots<x_k \right\}.$$
It is clear that there exists a bijection $\bigcup\limits_{k=0}^n[n]_<^k \rightarrow \mathcal{P}([n])$. Hence, the Boolean operations on $\mathcal{P}([n])$ make sense in $\bigcup\limits_{k=0}^n[n]_<^k$.

The notations $\End(O)$  and $\aut(O)$ stand for the set of endomorphisms and automorphisms of an object $O$ in a category.

The {\em symmetric group} of permutations of $n$ objects is denoted by $S_n$. A permutation $\sigma \in S_n$ can be written in one line notation as $\sigma(1)\sigma(2)\ldots\sigma(n)$. An \emph{inversion} in $\sigma$ is a pair $(i,j) \in [n]^2_<$ such that $\sigma(i)>\sigma(j)$. The number of inversions in $\sigma$ is denoted by $\inv(\sigma)$.

For any field $\mathbb{F}$ let $\mathrm{Mat}(n,\F)$ be the set of $n \times n$ matrices over $\F$, $\id_n$ the identity matrix and $\gl(n,\F)$ the group of invertible matrices of size $n$.

The \emph{projective space} of a vector space $V$ is denoted by $\mathbb{P}(V)$ and
the \emph{Grassmannians} by
$$\gr_{\F}(k,n):=\left\{W \subseteq \F^n:\mbox{$W$ is a vector subspace of dimension $k$}\right\}.$$
Let $\phi: \gr_{\F}(k,n) \rightarrow \mathbb{P}\bigl(\bigwedge^k \F^n \bigr)$ be the Pl\"ucker embedding, i.e.\ the  injective function defined by $\phi(W)=[w_1 \wedge \ldots \wedge w_k]$, for
any basis $\left\{w_1,\ldots,w_k\right\}$ of $W\in \gr_{\F}(k,n)$.

Finally, the set of \emph{complete flags} in $\F^n$ is
$$\mathrm{Fl}_n(\F):=\left\{W_1 \subseteq \ldots \subseteq W_n : W_i \in \gr_{\F}(i,n), \, \forall \,i\in[n]\right\}.$$

\subsection{Posets, incidence algebras and incidence groups}

All posets considered in this paper are finite. An \emph{interval} in a poset $\left(X,\leqslant\right)$ is a subset $[x,y]:=\left\{z\in X:x \leqslant z \leqslant y\right\}$, where $x,y \in X$ and $x \leqslant y$. When $|[x,y]|=2$, we use the notation $x \vartriangleleft y$. The following two posets appear repeatedly in the sequel:

\begin{itemize}
\item $c_n:=([n],\leqslant)$, the chain of $n$ elements;
\item $t_n$ the trivial poset on $[n]$, i.e.\ the poset without relations.
\end{itemize}

We need to introduce the following definition in order to deal with incidence algebras as matrix algebras.

\begin{dfn}
Let $n >0$. Define the set of \emph{naturally labeled posets} as $$\pos(n):=\left\{([n],\leqslant_P): i \leqslant_P j \Rightarrow i \leqslant j, \, \forall \, i,j \in [n]\right\}.$$ 
\end{dfn}


The set of relations of $P \in \pos(n)$ is $$T_P:=\left\{(i,j)\in [n]^2_<:i<_Pj\right\}.$$
The elements of $\pos(n)$ can be ordered by setting $$P \leqslant Q \Leftrightarrow T_P \subseteq T_Q,$$ for all $P,Q \in \pos(n)$.
This is a particular case of weak order on binary relations, as defined in \cite{integer posets}.
The poset $\left(\pos(n),\leqslant\right)$ has minimum and maximum, namely $t_n$ and $c_n$, respectively.


The following notions are fundamental for the rest of this article.

\begin{dfn}
The {\em incidence algebra} of a poset $P \in \pos(n)$ over a field $\mathbb{F}$ is $$I(P;\mathbb{F}):=\left\{A \in \mathrm{Mat}(n,\F): A_{i,j}=0, \text{   if   } i>j \text{ or } (i,j) \in [n]^2_< \setminus T_P\right\},$$ where $A_{i,j}$ is the $ij$-entry of the matrix $A$.
The \emph{incidence group} of $P$ over $\mathbb{F}$ is $$I^*(P;\mathbb{F}):=I(P;\F)\cap \gl(n,\F).$$ The \emph{unipotent group} of $P$ is the subgroup of $I^*(P;\mathbb{F})$ defined by $$U(P;\mathbb{F}):=\left\{A\in I^*(P;\mathbb{F}):A_{i,i}=1,~\forall~i\in[n]\right\}.$$
\end{dfn}
The algebra $I(t_n;\mathbb{F})$ is the algebra of diagonal matrices over $\mathbb{F}$.
In general, it is clear that  $I(P;\mathbb{F})$ is a subalgebra of the algebra $I(c_n;\mathbb{F})$ of $n \times n$ upper triangular matrices over $\mathbb{F}$.

Notice that $P \leqslant Q$ implies $I^*(P;\mathbb{F})\subseteq I^*(Q;\mathbb{F})$, for all $P,Q \in \pos(n)$.
We are going to prove that the quotient $I^*(Q;\mathbb{F})/I^*(P;\mathbb{F})$ has a nice structure, under suitable assumptions.

A graph on $n$ vertices is a pair $([n],E)$, where $E\subseteq [n]^2_<$ is the set of edges. The \emph{comparability graph} of $P\in \pos(n)$ is the graph $([n],T_P)$.

\begin{dfn}
  Let $P,Q\in \pos(n)$ such that $P\leqslant Q$. We say that $P$ is \emph{complemented} in $Q$ if $([n],T_Q\setminus T_P)$ is the comparability graph of a poset $P^c(Q)$.
\end{dfn}

\begin{prop}\label{Prop complementato}
 Let $P,Q\in \pos(n)$. Assume $P$ complemented in $Q$. Then $U(P^c(Q);\mathbb{F}) \subseteq I^*(Q;\mathbb{F})$ and we have that the canonical projection $I^*(Q;\mathbb{F}) \rightarrow I^*(Q;\mathbb{F})/I^*(P;\mathbb{F})$ restricts to a bijection $$ \pi_U : U(P^c(Q);\mathbb{F}) \rightarrow I^*(Q;\mathbb{F})/I^*(P;\mathbb{F}).$$
\end{prop}
\begin{proof}
If $P=Q$ then $P^c(Q)=t_n$ and the result follows. Assume $P<Q$.
It is clear that $P^c(Q)\leqslant Q$. Then $U(P^c(Q),\mathbb{F}) \subseteq I^*(Q,\mathbb{F})$.
Since $U(P^c(Q),\mathbb{F}) \cap I^*(P,\mathbb{F}) = \left\{\id_n\right\}$, the function $\pi_U$ is injective.


It remains to prove that $\pi_U$ is surjective. Let $A \in I^*(Q;\mathbb{F})$. It is sufficient to prove that there exists $X \in I^*(P;\mathbb{F})$ with $AX \in U(P^c(Q),\mathbb{F})$, because $\pi_U(AX)=\pi_U(A)$. The condition $AX \in U(P^c(Q),\mathbb{F})$ is satisfied if and only if
\begin{enumerate}
  \item $X_{i,i}=\frac{1}{A_{i,i}}$, for all $i\in [n]$ and
  \item $\sum \limits_{i \leqslant_Q k \leqslant_P j} A_{i,k}X_{k,j}=0$, for all $(i,j)\in T_P$.
\end{enumerate}
This gives a non homogeneous linear system whose matrix is an element of $I^*(T_P;\F)$, where $T_P$ is the induced subposet of the Cartesian product $Q \times Q$. Then such a linear system admits a solution $X$.
\end{proof}

  \begin{oss}
  The bijection of Proposition \ref{Prop complementato} is not a group isomorphism, because in general $I^*(P;\mathbb{F})$ is not a normal subgroup of $I^*(Q;\mathbb{F})$.
  \end{oss}

We end this section by defining the following duality function.
\begin{dfn} \label{dual}
  An involution $* : \pos(n) \rightarrow \pos(n)$ is defined by letting $i \leqslant_{P^*} j$ if and only if $n+1-j \leqslant_P n+1-i$, for every $P\in \pos(n)$. A fixed point of $*$ is called a \emph{self-dual poset}.
\end{dfn}

\subsection{The symmetric groups as Coxeter groups}
A {\em Coxeter system} $(W,S)$ is a group $W$ with a presentation whose generators are the elements of a finite set $S=\left\{s_1,\cdots,s_{n-1}\right\}$, with relations given by $s_i^2=e$ and $(s_is_j)^{m_{ij}}=e$, for suitable $m_{ij} \geq 2$ if $i \neq j$, where $e$ is the identity in $W$.

Given a Coxeter system $(W,S)$, the {\em length function} $\ell:W \rightarrow \N$ is defined by $\ell(w):=\min \left\{k \in \N: w=s_{i_1}s_{i_2}\cdots s_{i_k} \right\}$, for every $w \in W$.

For any $J\subseteq S$, the subgroup generated by $J$ is denoted by $W_J$. Define $$W^J:=\left\{w\in W:\ell(ws)>\ell(w),\,\forall\, s\in J\right\}.$$ We recall an important result (see \cite[Proposition~2.4.4]{BB}).

\begin{prop}
Any element $w\in W$ factorizes uniquely as $w=w^Jw_J$, where $w^J\in W^J$, $w_J\in W_J$ and $\ell(w)=\ell(w^J)+\ell(w_J)$.
\end{prop}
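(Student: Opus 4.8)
The strategy is to derive both the uniqueness of the factorization and the length identity from a single \emph{key lemma}: if $u\in W^J$ then $\ell(uv)=\ell(u)+\ell(v)$ for every $v\in W_J$. Granting this for the moment, existence is immediate. Given $w\in W$, the left coset $wW_J$ is finite, so it contains an element $w^J$ of minimal length; put $w_J:=(w^J)^{-1}w\in W_J$, so that $w=w^Jw_J$. For $s\in J$ we have $w^Js\in wW_J$, hence $\ell(w^Js)\geq\ell(w^J)$ by minimality; since $\ell(xs)\in\{\ell(x)-1,\ell(x)+1\}$ for all $x\in W$ and $s\in S$, this forces $\ell(w^Js)>\ell(w^J)$, i.e.\ $w^J\in W^J$. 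The length identity $\ell(w)=\ell(w^J)+\ell(w_J)$ is then exactly the key lemma applied to $u=w^J$, $v=w_J$.

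For the key lemma I would induct on $\ell(v)$. If $\ell(v)=0$ there is nothing to prove. If $\ell(v)\geq1$, a reduced word of $v\in W_J$ ends with some letter $s\in J$, so $v=v_1s$ with $v_1\in W_J$ and $\ell(v_1)=\ell(v)-1$; by induction $\ell(uv_1)=\ell(u)+\ell(v_1)$. Since $uv=uv_1s$ and lengths vary by $\pm1$, either $\ell(uv)=\ell(uv_1)+1=\ell(u)+\ell(v)$ and we are done, or $\ell(uv_1s)=\ell(uv_1)-1$. In the latter case, concatenate a reduced word $a_1\cdots a_p$ for $u$ ($p=\ell(u)$) with a reduced word $b_1\cdots b_r$ for $v_1$ ($b_i\in J$, $r=\ell(v_1)$); this word is reduced for $uv_1$ by the inductive hypothesis, and the Exchange Condition applied to it and the letter $s$ deletes one letter. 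Deleting some $b_j$ yields, after left multiplication by $u^{-1}$, an expression for $v_1s=v$ of length $\leq r-1<\ell(v)$, a contradiction. Deleting some $a_i$ yields, after right multiplication by $v_1^{-1}$, that $\ell\bigl(u\,(v_1sv_1^{-1})\bigr)\leq p-1<\ell(u)$, where $v_1sv_1^{-1}\in W_J$; a short auxiliary induction on $\ell(v_1)$ shows that $\ell(uv_1sv_1^{-1})<\ell(u)$ with $v_1\in W_J$ and $s\in J$ forces $\ell(us')<\ell(u)$ for some $s'\in J$, contradicting $u\in W^J$. Hence this case cannot occur and the lemma follows. I expect this nested reflection-deletion argument to be the main obstacle; in the ambient setting of the paper, where $W=S_n$, one can avoid it by a transparent combinatorial proof instead (see below).

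Finally, uniqueness. Suppose $w=xy=x'y'$ with $x,x'\in W^J$ and $y,y'\in W_J$. Then $x^{-1}x'=y(y')^{-1}\in W_J$, so $x$ and $x'$ lie in the same coset $wW_J$, and by the key lemma each of them has minimal length in that coset (for any $z\in W_J$, $\ell(xz)=\ell(x)+\ell(z)\geq\ell(x)$, and likewise for $x'$); writing $x'=xz$ with $z\in W_J$ gives $\ell(x')=\ell(x)+\ell(z)$, so $\ell(z)=0$, $z=e$, $x=x'$, whence $y=y'$. For $W=S_n$ with $J$ inducing the partition of $[n]$ into consecutive blocks $I_1,\dots,I_m$, the whole statement can alternatively be seen directly: $W_J=S_{I_1}\times\cdots\times S_{I_m}$, $W^J$ consists of the permutations that are increasing on each block, and one takes $w_J\in W_J$ to be the unique element sorting $w$ within each block, so that $w^J=ww_J^{-1}\in W^J$; since $\ell=\inv$ on $S_n$ and the inversions of $w$ split into those internal to a block (counted by $\inv(w_J)$) and those between two distinct blocks (counted by $\inv(w^J)$, and unchanged by the within-block sorting), one obtains $\inv(w)=\inv(w^J)+\inv(w_J)$, with $w_J$, and hence $w^J$, uniquely determined by the sorting requirement.
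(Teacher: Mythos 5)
The paper itself does not prove this proposition at all---it is quoted from \cite[Proposition 2.4.4]{BB}---so there is no in-paper argument to compare with; the question is whether your self-contained proof closes. Your existence and uniqueness arguments are fine \emph{granting} the key lemma, and in the key lemma the case where the Exchange Condition deletes a letter $b_j$ is fine, but the step you yourself flag as ``the main obstacle'' is a genuine gap. In the case where a letter $a_i$ of $u$ is deleted you are left with $\ell\bigl(u\,v_1sv_1^{-1}\bigr)<\ell(u)$ for the reflection $t=v_1sv_1^{-1}\in W_J$, and you assert that ``a short auxiliary induction on $\ell(v_1)$'' converts this into $\ell(us')<\ell(u)$ for some $s'\in J$. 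That induction does not close as stated: stripping the last letter $s''$ of a reduced word of $v_1$ replaces $s$ by $s''ss''$, which is no longer a simple reflection, so an inductive hypothesis formulated for $s\in J$ does not apply; stripping the first letter writes $t=s''t's''$ with $t'=v_2sv_2^{-1}$, and the element one would want to feed to the hypothesis, $us''$, already has the right descent $s''\in J$, so the hypothesis gives nothing. The auxiliary claim is true, but the short proofs of it use either the geometric representation (an element of $W^J$ maps every positive root of the parabolic subsystem to a positive root) or the very lemma being proved. The standard repair keeps your skeleton but changes the hypothesis of the key lemma: prove additivity $\ell(uv)=\ell(u)+\ell(v)$ for $u$ of \emph{minimal length in its coset} $uW_J$; then in the problematic case the element $u\,v_1sv_1^{-1}$ lies in $uW_J$ and has length at most $\ell(u)-1$, contradicting minimality outright, with no auxiliary claim needed. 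One then shows that every $u\in W^J$ is minimal in $uW_J$ (write $u=u_0z$ with $u_0$ minimal and $z\in W_J$; if $z\neq e$, additivity applied to $u_0$ gives $\ell(us)<\ell(u)$ for a descent $s\in J$ of $z$, contradicting $u\in W^J$), after which your uniqueness argument goes through verbatim.

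Two smaller points. The coset $wW_J$ need not be finite for a general Coxeter system (only $S$ is assumed finite), but finiteness is not needed: the set of lengths occurring in $wW_J$ is a nonempty set of non-negative integers, so a minimum exists. Finally, your alternative argument for $W=S_n$---sorting $w$ within the blocks of $[n]$ determined by $J$, and splitting $\inv(w)$ into within-block inversions, which equal $\inv(w_J)$, and between-block inversions, which equal $\inv(w^J)$---is correct and complete, and since the paper only ever applies the proposition to $S_n$ and its quotients $S_n^{(k)}$, it would suffice for the paper's purposes; but the statement as given concerns an arbitrary Coxeter system, so the general argument needs the repair described above.
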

Therefore one can define an idempotent function $P^J: W \rightarrow W$ by setting $P^J(w):=w^J$.

One of the most important features of a Coxeter group is a natural partial order $\leqslant$ on it, called {\em Bruhat order}. It can be defined by the subword property (see \cite[Chapter~2]{BB} and \cite[Chapter~5]{humphreysCoxeter}). The induced subposet $(W^J,\leqslant)$ is graded with rank function $\ell$ (see \cite[Theorem~2.5.5]{BB}) 
and the function $P^J$ is order preserving, i.e. $u \leqslant v$ implies $P^J(u) \leqslant P^J(v)$, for all $u,v\in W$ (see \cite[Proposition~2.5.1]{BB}).

The symmetric group $S_n$ is a Coxeter group; its standard Coxeter presentation has generators $S=\left\{s_1,\ldots,s_{n-1}\right\}$,
where $s_i$ is the permutation $12\ldots(i+1)i\ldots n$, for all $i\in [n-1]$. With respect to this presentation, $\ell(\sigma)=\inv(\sigma)$, for every $\sigma \in S_n$. Hence the element of maximal length is $w_0=n(n-1)\ldots21$.

The following example should make clear how to obtain the permutation $P^J(\sigma)$. For more information about how $P^J$ rearranges a permutation, we refer to \cite[Section~2.4]{BB}.
\begin{ex}
  Let $n=7$, $J=\left\{s_1,s_2,s_4,s_6\right\}$ and $\sigma=4317625$. Therefore we have to rearrange increasingly the blocks $431$, $76$ and $25$. It follows that $P^J(\sigma)=1346725$.
\end{ex}

We denote $S_n^{S\setminus \left\{s_k\right\}}$ by $S_n^{(k)}$, for all $k\in [n-1]$, and we set $S_n^{(n)}:=\{e\}$. It is clear that $$S_n^{(k)}=\left\{\sigma\in S_n: \sigma(1)<\cdots<\sigma(k), \sigma(k+1)<\cdots<\sigma(n)\right\},$$ for all $k\in[n]$. The elements of $S_n^{(k)}$ are called \emph{Grassmannian permutations} since they index the set of Schubert varieties of $\gr_{\C}(k,n)$. Moreover the set $S_n^{(k)}$ is in bijection with the set $[n]_<^k$, for all $k\in [n]$.
For example,
$2357146 \in S_7^{(4)}$
corresponds to $(2,3,5,7)\in [7]^4_<$.

By the next result the Bruhat order on $S^{(k)}_n$ corresponds to the componentwise ordering of $[n]^k_<$ (see \cite[Proposition~2.4.8]{BB}).
\begin{prop} \label{proposizione Bruhat quoziente}
The induced Bruhat order on $S_n^{(k)}$ is described by $$\sigma \leq \tau \text{   if and only if    } \sigma(i) \leq \tau(i), \text {   for every   } i \in [k-1],$$
for all $\sigma, \tau \in S_n^{(k)}$.
\end{prop}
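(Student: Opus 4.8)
The plan is to reduce everything to the combinatorics of the one-line notation of Grassmannian permutations, using the standard identification $S_n^{(k)} \leftrightarrow [n]_<^k$ that sends $\sigma$ to the increasing sequence $(\sigma(1),\dots,\sigma(k))$ of its first $k$ values. Under this bijection, an element of $S_n^{(k)}$ is completely determined by $(\sigma(1),\dots,\sigma(k))$, since the remaining values $\sigma(k+1)<\dots<\sigma(n)$ are forced to be the complement listed in increasing order. So the claimed criterion is really a statement about when one $k$-subset of $[n]$ dominates another coordinatewise after sorting. Note also that the condition ``$\sigma(i)\le\tau(i)$ for every $i\in[k-1]$'' is equivalent to ``$\sigma(i)\le\tau(i)$ for every $i\in[k]$'': indeed $\sigma(k)=n-|\{j>k:\sigma(j)<\sigma(k)\}|$ is not automatic, but one checks directly that if $\sigma(i)\le\tau(i)$ for $i<k$ then necessarily $\sigma(k)\le\tau(k)$ as well, because the $k$-th smallest element of a set cannot be smaller when the first $k-1$ are already smaller — I would state this as a short lemma on sorted sequences and dispose of it first.

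The substantive part is to identify the induced Bruhat order with this componentwise order. Here I would invoke Proposition~2.4.4 of \cite{BB}: writing $J=S\setminus\{s_k\}$, every $w\in S_n$ factors uniquely as $w=w^Jw_J$ with $w^J\in S_n^{(k)}=S_n^J$ and $\ell(w)=\ell(w^J)+\ell(w_J)$, and by \cite[Proposition~2.5.1]{BB} the projection $P^J$ is order-preserving and the Bruhat order on $S_n^{(k)}$ is precisely the order induced from $S_n$ (this is exactly the setting of the poset $(W^J,\le)$ recalled in the excerpt). The tableau/subword criterion then says: for $\sigma,\tau\in S_n^{(k)}$, $\sigma\le\tau$ in $S_n$ iff for every $m\in[n]$ and every $r$, the number of indices $i\le r$ with $\sigma(i)\ge m$ is $\le$ the corresponding count for $\tau$ — the standard ``dot matrix'' characterization of Bruhat order. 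Because $\sigma$ and $\tau$ are increasing on $\{1,\dots,k\}$ and on $\{k+1,\dots,n\}$, all of these inequalities collapse: the only nontrivial ones occur at $r=1,\dots,k$, and at $r=i\le k$ the condition becomes exactly $\sigma(i)\le\tau(i)$ (the count of the first $i$ values that are $\ge m$, for the worst $m$, is governed by the $i$-th value). Carrying out this collapse carefully — showing the inequalities for $r>k$ and for the ``wrong'' thresholds $m$ are implied — is the one genuine computation.

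I expect the main obstacle to be precisely that bookkeeping step: translating the combinatorial Bruhat criterion on full permutations into the minimal set of inequalities and verifying no information is lost when restricting to $S_n^{(k)}$. A clean way to package it is to use the fact (again from \cite{BB}, around Proposition~2.4.8) that $S_n^{(k)}$ with the induced order is isomorphic as a poset to the set of $k$-subsets of $[n]$ ordered by $A\le B \iff a_i\le b_i$ for the sorted listings, via the map $\sigma\mapsto\{\sigma(1),\dots,\sigma(k)\}$; once that isomorphism is in hand the proposition is immediate. So the proof structure would be: (1) record the sorted-sequence lemma making ``$i\in[k-1]$'' and ``$i\in[k]$'' interchangeable; (2) invoke the factorization $w=w^Jw_J$ and that the induced order on $S_n^J$ is the Bruhat order; (3) apply the dot-matrix criterion for Bruhat order and collapse it using that both permutations are increasing on the two blocks, yielding exactly $\sigma(i)\le\tau(i)$ for $i\in[k]$; (4) conclude. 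Alternatively, since the statement is quoted from \cite[Proposition~2.4.8]{BB}, one could simply cite it — but I would prefer to include the short derivation above for completeness.
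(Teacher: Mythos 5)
Your argument has one genuinely false step, and it occurs at the point where care was most needed. You claim as a "short lemma on sorted sequences" that if $\sigma(i)\leq\tau(i)$ for all $i\in[k-1]$ then automatically $\sigma(k)\leq\tau(k)$. This is not true: take $n=3$, $k=2$, $\sigma=132$ and $\tau=123=e$ in $S_3^{(2)}$; then $\sigma(1)=1\leq 1=\tau(1)$, yet $\sigma(2)=3>2=\tau(2)$, and of course $\sigma\not\leq\tau$ since $\tau$ is the minimum of the Bruhat order. The $k$-th smallest element of a $k$-subset is in no way controlled by the first $k-1$ elements of the other subset. The same example shows that the proposition as printed, with the index set $[k-1]$, is simply false; the correct statement has $[k]$, which is the form in which it appears in Bj\"orner--Brenti \cite[Proposition 2.4.8]{BB} --- precisely the reference the paper cites, giving no proof of its own. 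So the right response to the mismatch you noticed was to flag $[k-1]$ as a typo for $[k]$, not to invent an equivalence between the two conditions: your "lemma" proves a false statement and therefore cannot be repaired.

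Setting that aside and reading the proposition with $[k]$, the rest of your outline (unique factorization $w=w^Jw_J$ from \cite[Proposition 2.4.4]{BB}, the induced order on $S_n^J$, identification of $S_n^{(k)}$ with $k$-subsets, and collapsing the dot-matrix/tableau criterion using that $\sigma$ and $\tau$ are increasing on $\{1,\dots,k\}$ and on $\{k+1,\dots,n\}$) is the standard route and would succeed, but as written it remains a sketch: the assertion that the inequalities for rows $r>k$ and for the intermediate thresholds follow from those at $r\in[k]$ is exactly the substance of the proposition and is claimed rather than verified. Since the paper itself proves nothing here and only cites \cite[Proposition 2.4.8]{BB}, your fallback option of citing that result (with the corrected index set $[k]$) is the option that matches the paper; if you keep the self-contained derivation, the collapse step must actually be carried out.
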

By \cite[Theorem~2.6.1]{BB}, the Bruhat order on $S_n$ can be given in terms of the posets $(S_n^{(k)},\leqslant)$, $k\in [n]$. Namely $\sigma \leqslant \tau$ if and only if $P^{S\setminus \left\{s_k\right\}}(\sigma) \leqslant P^{S\setminus \left\{s_k\right\}}(\tau)$, for all $k\in [n-1]$.

The poset $(S_n^{(k)},\leqslant)$ is isomorphic to the set of Schubert varieties in $\gr_{\C}(k,n)$ ordered by inclusion.
Analogously, $(S_n,\leqslant)$ is the poset of Schubert varieties in $\mathrm{Fl}_n(\C)$ ordered by inclusion.

We are also interested in the so-called {\em Gale ordering} on $S_n^{(k)}$.

\begin{dfn}\label{gale}
The \emph{Gale ordering} $\leqslant^{\sigma}$ on $S_n^{(k)}$ induced by $\sigma \in S_n$, is defined by letting
$u \leqslant^{\sigma} v$ if and only if $P^{S\setminus \left\{s_k\right\}}(\sigma u) \leqslant P^{S\setminus \left\{s_k\right\}}(\sigma v)$, for all $u,v \in S_n^{(k)}$.
\end{dfn} For example, let $u=2413567$, $v=5712346$ in $S_7^{(2)}$, and $\sigma=3256174\in S_7$. Then $u \leqslant^e v$. Moreover
$\sigma u =2635174$, $\sigma v= 1432567$ and $P^{S\setminus \left\{s_2\right\}}(\sigma v) =1423567 \leqslant 2613457=P^{S\setminus \left\{s_2\right\}}(\sigma u)$.
Therefore $v \leqslant^\sigma u$.

Following \cite[Section~1.7]{coxeter matroids}, we define the Gale order on a symmetric group.
\begin{dfn}
  The \emph{Gale ordering} $\leqslant^{\sigma}$ on $S_n$ induced by $\sigma \in S_n$, is defined by letting
$u \leqslant^{\sigma} v$ if and only if $\sigma u \leqslant \sigma v$, for all $u,v \in S_n$.
\end{dfn}
\noindent This is equivalent to require $P^{S\setminus \left\{s_k\right\}} (u) \leqslant^{\sigma} P^{S\setminus \left\{s_k\right\}}(v)$, for every $k\in [n-1]$.

For example, let $u=324561$, $v=623541$ in $S_6$, and $\sigma=325614\in S_6$. Then $u \leqslant^e v$. Moreover
$\sigma u =526143$, $\sigma v= 425163$ and $526143 \nleqslant 425163$. Hence $u \nleqslant ^\sigma v$.

\subsection{Matroids} \label{subsection matroids}

Let $n>0$ and $k\in [n]$. A set $M \subseteq S^{(k)}_n$ is a \emph{matroid}\footnote{More precisely, the set of \emph{bases} of a matroid.} of rank $k$ if it satisfies the \emph{Maximality Property}:

\begin{center} \label{maximality property}
  the induced subposet $(M,\leqslant^{\sigma})$ has maximum, for all $\sigma \in S_n$.
\end{center}

\begin{oss} \label{osserv massimi minimi}
  Since the left multiplication by $w_0$ is an antinvolution of the poset $(S_n,\leqslant)$, the Maximality Property is equivalent to saying
  that $(M,\leqslant^{\sigma})$ has minimum, for all $\sigma \in S_n$, i.e.\ has maximum and minimum, for all $\sigma \in S_n$.
\end{oss}
The set of matroids in $[n]^k_<$ can be ordered by inclusion (this is usually called \emph{weak order}, see e.g. \cite[Chapter~9]{Theory of matroids}).

Let $W \in \gr_{\F}(k,n)$ and $\left\{v_1,\ldots,v_k\right\} \subseteq \F^n$ be a basis of $W$. If $\left\{e_1,\ldots,e_n\right\}$ is the canonical basis of $\F^n$, one has that $$v_1\wedge \ldots\wedge v_k= \sum \limits_{i\in [n]^k_<} a_ie_{i_1}\wedge \ldots \wedge e_{i_k}.$$ It is well known that $M(W):=\left\{i\in [n]^k_<:a_i\neq 0\right\}$ is the set of bases of a matroid. Recall that we identify the set $S^{(k)}_n$ with $[n]_<^k$.
We say that a matroid $M \subseteq [n]^k_<$ is \emph{representable} over a field $\F$ if there exists a vector space $W \in \gr_{\F}(k,n)$ such that
$M=M(W)$. The equivalence relation $W_1 \sim W_2$ if and only if $M(W_1)=M(W_2)$, for all $W_1,W_2\in \gr_{\F}(k,n)$, provides the matroid stratification of $\gr_{\F}(k,n)$ introduced and studied in \cite{Gelfand}.

\begin{oss}
  Notice that the equivalence classes of the relation $\sim$ are given by the intersection between $\phi(\gr_{\F}(k,n))$ and the orbits of the action of the group
  of invertible diagonal matrices, of size $\binom{n}{k}$, on $\mathbb{P}\left(\bigwedge^k \F^n\right)$, where $\phi$ is the Pl\"ucker embedding.
\end{oss}

By using \cite[Theorem~3.3]{Lattice path matroids}, it is not difficult to characterize Bruhat intervals in $S_n^{(k)}$ as particular types of transversal matroids, namely lattice path matroids, in the meaning of \cite[Definition~3.1]{Lattice path matroids} (see also \cite[Definition~22]{Oh}). By \cite[Lemma~23]{Oh}
they are positroids.

We recall the following extension of the notion of matroid.
\begin{dfn} \label{def flag matr}
A subset $F \subseteq S_n$ such that the induced subposet $(F,\leqslant^\sigma)$ has maximum for all $\sigma \in S_n$, is said to be  a \emph{flag matroid}.
\end{dfn}  \noindent By \cite[Theorem~4.4]{CasDadMar}, any Bruhat interval in $S_n$ is a flag matroid. For sake of completeness we provide a proof of the following property which gives a connection between flag matroids and matroids.

\begin{prop}
Let $k \in [n-1]$. If $F \subseteq S_n$ is a flag matroid, then $\left\{P^{S\setminus \left\{s_k\right\}}(f):f \in F\right\}$ is a matroid of rank $k$.
\end{prop}

\begin{proof} We set $J_k:=S\setminus \left\{s_k\right\}$.
Let $\sigma \in S_n$ and $f_{\sigma}$ be the maximum of the poset $(F,\leqslant^\sigma)$. We claim that $P^{J_{k}}(f_{\sigma})$ is the maximum of $\left\{P^{J_{k}}(f):f \in F\right\} \subseteq S_n^{(k)}$ with respect to $\leqslant^{\sigma}$. Let $u \in F$; then $u \leqslant^{\sigma} f_{\sigma}$, i.e.\ $\sigma u\leqslant \sigma f_{\sigma}$.
Recall that the projection $P^{J_{k}}$ is order preserving. Then $P^{J_{k}}(\sigma u)\leqslant P^{J_{k}}(\sigma f_{\sigma})$. We have that $P^{J_{k}}(\sigma u)=P^{J_{k}}(\sigma u^{J_k} u_{J_k})=P^{J_{k}}(\sigma u^{J_{k}})$ and similarly $P^{J_{k}}(\sigma f_{\sigma})=P^{J_{k}}(\sigma f_{\sigma}^{J_{k}})$; then
$P^{J_{k}}(\sigma P^{J_k}(u)) \leqslant P^{J_k}(\sigma P^{J_k}(f_\sigma))$, i.e. $P^{J_k}(u) \leqslant^\sigma P^{J_k}(f_\sigma)$. This concludes the proof.
\end{proof}


\section{P-flag spaces} \label{sezione P-flag}

In this section we introduce a class of homogeneous spaces which is one the main object of our study, recovering as particular cases the flag varieties and the
moduli space of $n$ independent lines in $\C^n$.

Let $\F$ be a field, $n>0$ and $P\in \pos(n)$. Consider $V:=\F^n$, the $\F$-vector space with canonical basis $\left\{e_i:i\in [n]\right\}$.
Given any subset $I\subseteq [n]$, we define the vector subspace $$V_I:=\spn_{\F}\left\{e_i:i\in I\right\}.$$

%

Recall that an {\em order ideal} in a poset $P$ is a subset $I \subseteq P$ such that $i \in I$ and $j \leq_P i$ imply $j \in I$. The distributive lattice of order ideals of a poset $P\in \pos(n)$ is denoted by $\JP$. It is clear that there is a bijection between $\JP$ and the {\em antichains} of $P$, i.e.\ the set $\left\{\max(I):I\in \JP\right\}$.

For $i\in [n]$, we define the {\em principal order ideal} generated by $i\in P$ by setting $$i^\downarrow:=\left\{j\in [n]: j\leqslant_P i\right\}.$$
Given a subset $I \subseteq [n]$, we define $I^\downarrow :=\bigcup\limits_{i\in I}i^\downarrow$,
 the order ideal of $P$ generated by $I$. We write $i^\downarrow_P$ and $I_P^\downarrow$ whenever we need to stress the poset under consideration.
Notice that the number of relations of $P$ is $|T_P|=\sum\limits_{i\in [n]}|i^\downarrow|-n$.

 The following is one of the main definition of this article.

\begin{dfn} \label{complete flag}
   A $P$-\emph{flag} in $V$
is an $n$-tuple $(V_1,\ldots,V_n)$ of vector subspaces of $V$ which satisfies the following condition:
$$\dim\left(\sum \limits_{i\in I} V_i\right)=\left|I^\downarrow\right|,$$ for every $I\subseteq [n]$.
The set of  $P$-flags of $V$ is denoted by $\pfl(\F)$.
\end{dfn}\noindent We call \emph{standard} $P$-\emph{flag} of $V$ the tuple $$F_e^P:=(V_{1^\downarrow},\ldots,V_{n^\downarrow}).$$

A $c_n$-flag is a complete flag in the usual meaning. On the other hand, a $t_n$-flag is an $n$-tuple of lines in $\F^n$ whose generators are linearly independent. The following example shows an intermediate case between the previous ones.

\begin{ex}
  Let $V=\F^6$ and $P\in \pos(6)$ be the poset in the figure below:

   \begin{center}
 \begin{tikzpicture}[scale=.4]
  \node (one) at (2,4) {$6$};
  \node (a) at (-2,2) {$4$};
  \node (b) at (2,2) {$5$};
  \node (c) at (0,0) {$3$};
  \node (d) at (2,-2) {$2$};
  \node (e) at (-2,-2) {$1$};
  \draw (d) -- (c) -- (e);
  \draw (a) -- (c) -- (b) -- (one);
\end{tikzpicture}
  \end{center}

  Let us consider the following vector subspaces of $V$:
  \begin{itemize}
    \item $W_1:=\spn_{\F}\left\{e_1\right\}$, $W_2:=\spn_{\F}\left\{e_2\right\}$,
    \item $W_3:=\spn_{\F}\left\{e_1,e_2,e_3\right\}$, $W_4:=\spn_{\F}\left\{e_1,e_2,e_3,e_4\right\}$,
    \item $W_5:=\spn_{\F}\left\{e_1,e_2,e_3,e_5\right\}$, $W_6:=\spn_{\F}\left\{e_1,e_2,e_3,e_5,e_6\right\}$.
  \end{itemize}

  Then $(W_1,W_2,W_3,W_4,W_5,W_6)$ is the standard $P$-flag.

  The tuples $(W_1,W_2,W_3,W_4,W_6,W_5)$ and $(W_1,W_1,W_3,W_4,W_5,W_6)$ are not $P$-flags.
  Examples of $P$-flags are $$\mbox{$(W_1,W_2,W_3,W_5,W_4,W_6)$ and $(W_2,W_1,W_3,W_5,W_4,W_6)$.}$$

\end{ex}

Recall that for $F\in \pfl(\F)$, $F_i$ is the projection on the $i$-th factor.
The following proposition states some properties of a $P$-flag.

\begin{prop} \label{proprieta bandiere}
  Let $F \in \pfl(\F)$. Then
\begin{enumerate}
  \item $\dim(F_i)=|i^\downarrow|$, for all $i\in [n]$;
  \item $\dim(F_i \cap F_j)=|i^\downarrow\cap j^\downarrow|$;
  \item $F_i \subseteq F_j$ if and only if $i\leqslant_P j$;
  \item $\sum \limits_{i \in [n]} F_i = V$.
\end{enumerate}
\end{prop}
\begin{proof}
Properties $1.$ and $4.$ are obtained by Definition \ref{complete flag}, taking $I=\left\{i\right\}$ and $I=[n]$, respectively.

By the Grassmann formula and Property $1.$, $\dim(F_i \cap F_j)=\dim(F_i)+\dim(F_j)-\dim(F_i+F_j)=|i^\downarrow|+|j^\downarrow|-|i^\downarrow\cup j^\downarrow|=|i^\downarrow\cap j^\downarrow|$.

To prove Property $3.$, let $F_i \subseteq F_j$. This holds if and only if $\dim(F_i \cap F_j)=\dim(F_i)$.
But this is equivalent to $|i^\downarrow\cap j^\downarrow|=|i^\downarrow|$, which is equivalent to $i^\downarrow \subseteq j^\downarrow$, i.e.\ $i \leqslant_P j$.
\end{proof}


\begin{oss}
 Let $F \in \pfl(\F)$. Note that, by Property $3.$ of Proposition \ref{proprieta bandiere}, $F_i=F_j$ if and only if $i=j$.
\end{oss}

\begin{oss}
Let $F:=(W_1,\ldots,W_n)\in \pfl(\F)$ and $\sigma \in S_n$ such that $\sigma F:= (W_{\sigma^{-1}(1)},\ldots,W_{\sigma^{-1}(n)}) \in \pfl(\F)$.
Then $\sigma^{-1}(1) \prec \ldots \prec \sigma^{-1}(n)$ is a linear extension of $P$. In fact let $i <_P j$, $\sigma(i)=: h$ and
$\sigma(j)=: k$. Then, by Proposition \ref{proprieta bandiere}, $W_i \subseteq W_j$. Since $(\sigma F)_h=W_i$ and $(\sigma F)_k=W_j$,
we have that $h \leqslant_P k$ and this implies $h<k$, so $\sigma^{-1}(h) \prec \sigma^{-1}(k)$.
It is straightforward to check that in general the converse does not hold.
\end{oss}


We are going to prove that the set of  $P$-flags admits a structure of homogeneous space. To do this, we need the following function.

\begin{dfn}\label{fonderflas}
The Fon-Der-Flaass action (see \cite{orbit order ideal}) is the invertible function $\Psi_P: \JP \rightarrow \JP$ defined by
$$\Psi_P(I):=\left[{\min}_P([n]\setminus I)\right]^\downarrow,$$ for all $I\in \JP$.
\end{dfn}

\noindent Notice that $\Psi_P(\varnothing)=\mathrm{min}(P)$ and $\Psi_P(P)=\varnothing$. 
Now we are ready to prove one of the main results of this section.

\begin{prop} \label{prop base}
  Let $F \in \pfl(\F)$. Then there exists a basis $B:=\left\{v_1,\ldots,v_n\right\}$ of $V$ such that $F_i \cap B=\left\{v_j \in B: j \in i^\downarrow\right\}$, for all $i\in [n]$.
\end{prop}
\begin{proof}
  For any $k \geq 1$, define the induced subposet $$P_k:=\bigcup\limits_{j=1}^k\Psi_P^j(\varnothing)$$ and consider the vector space $W_k:=\sum\limits_{i\in \max(P_k)}F_i$. It is clear that there exists $k\in \N$ such that $P_k=P$. If $P_k=\left\{i_1,\cdots,i_{|P_k|}\right\}$, being $i_1<\ldots<i_{|P_k|}$, then $\left(F_{i_1},\ldots,F_{i_{|P_k|}} \right)\in \mathrm{Fl}_{P_k}(\F)$, since the order ideals of $P_k$ are order ideals of $P$.

 We construct the basis $B$ by induction on $k$. Let $k=1$. Then $P_1=\min(P)$ and $F_i=\spn_{\F}\left\{v_i\right\}$ for some $v_i \in V$, for all $i\in \min(P)$. Since $\dim \left(\sum \limits_{i\in P_1}F_i\right)=|P_1|$, we have that $|\left\{v_i:i\in P_1\right\}|=|P_1|$ and the elements $v_1,\ldots,v_{|P_1|}$ are linearly independent.
 We let $B^1:=\left\{v_i:i\in P_1\right\}$. Then $F_i \cap B^1=\left\{v_i\right\}=\left\{v_j \in B^1:j\in i^\downarrow\right\}$, for all $i\in P_1$.

Now let $k>1$. By induction, we have a basis $B^{k-1}$ of $W_{k-1}$ such that  $F_i \cap B^{k-1}=\left\{v_j \in B^{k-1}:j\in i^\downarrow \right\}$, for all $i\in P_{k-1}=P_k \setminus \max(P_k)$.

Let $\max(P_k)=\left\{p_1,\ldots,p_r\right\}$; by Proposition \ref{proprieta bandiere}, $F_q \subseteq F_{p_i}$ for all $q \vartriangleleft p_i$, $i\in [r]$, and
$$\dim\left(\sum\limits_{q\vartriangleleft p_i}F_q\right)=\left|\left\{q \in P_k: q \vartriangleleft p_i\right\}^\downarrow\right|=|p_i^\downarrow|-1=\dim(F_{p_i})-1.$$
 This implies the existence of an element $v_{p_i}\in F_{p_i}\setminus \left(\sum\limits_{q\vartriangleleft p_i}F_q\right)$, for all $i\in [r]$. We let $$B^k:=B^{k-1} \cup \left\{v_{p_1},\ldots,v_{p_r}\right\}.$$ 

 It remains to prove that $B^k$ is a basis of $W_k$. Let $i\in [r]$ and assume by contradiction $v_{p_i} \in \sum\limits_{j\in P_k \setminus \left\{p_i\right\}} F_j$. Then $F_{p_i} \subseteq \sum \limits_{j \in P_k \setminus \left\{p_i\right\}}F_j$. Hence $$|P_k|=\dim\left(\sum \limits_{j\in P_k}F_j\right)=\dim\left(\sum \limits_{j \in P_k \setminus \left\{p_i\right\}}F_j\right)=|P_k|-1.$$
If $P_k=P$, we let $B:=B^k$. Then $B$ is a basis of $V$ with the stated property.
\end{proof}

\begin{cor}
  Let $F \in \pfl(\F)$. Then the set $\left\{F_i: i \in [n]\right\}$ generates, by sums and intersections, a distributive lattice isomorphic to $\mathcal{J}(P)$.
  Moreover $$\dim\left(\bigcap \limits_{i\in I} F_i\right)=\bigg|\bigcap\limits_{i\in I}i^\downarrow\bigg|,$$ for all $I\subseteq [n]$.
\end{cor}
\begin{proof}
 By Proposition \ref{prop base}, the lattice generated by $\left\{F_i: i \in [n]\right\}$ is isomorphic to the lattice $L$ generated by $\left\{F_i \cap B : i \in [n]\right\}$, with respect to the operations $\cup$
 and $\cap$, which is distributive. From this we deduce also the last assertion. Moreover, by construction $L$ is isomorphic to $\mathcal{J}(P)$.
\end{proof}

Let $F\in \pfl(\F)$.
If $B:=\left\{v_1,\ldots,v_n\right\}$ is a basis of $V$ such that $F_i \cap B=\left\{v_j \in B: j \in i^\downarrow\right\}$, for all $i\in [n]$, we say that
$B$ is $F$-\emph{adapted}.
Choosing an $F$-adapted basis $\left\{v_1,\ldots,v_n\right\}$ of $V$ for any $P$-flag $F\in \pfl(\F)$, we can define
a function $\beta :  \pfl(\F) \rightarrow \gl(n,\F)$ by setting $\beta(F)$ as the unique matrix which satisfies $\beta(F)e_i=v_i$, for all $i\in [n]$.

\begin{thm} \label{fl omogeneo} Let $\pi : \gl(n,\F)\rightarrow \gl(n,\F)/I^*(P;\F)$ be the canonical projection.
 Then the function $$\pi \circ \beta: \pfl(\F) \rightarrow \gl(n,\F)/I^*(P;\F)$$ is bijective.
\end{thm}
\begin{proof}
 An action of the group $\gl(n,\F)$ on $\pfl(\F)$ is given by  $$(AF)_i:=AF_i,$$ for all $i\in [n]$, $A\in \gl(n,\F)$, and $F\in \pfl(\F)$.  In fact dimensions are preserved and $A\left(\sum \limits_{i \in I} F_i \right)=\sum \limits_{i\in I} AF_i$
 for all $I\subseteq [n]$, $A\in \gl(n,\F)$. Since $\beta(F)F_e^P=F$, for all $F \in \pfl(\F)$, this action is transitive and $AF_e^P=F_e^P$
 if and only $A\in I^*(P;\F)$, so the result follows.
\end{proof}
For arbitrary fields, we call $\pfl(\F)$ a $P$-\emph{flag space}.
The set $\pfl(\mathbb{R})$ turns out to have a structure of differentiable manifold, which we call $P$-\emph{flag manifold}. We recover the real flag manifold for $P=c_n$.

\begin{cor} \label{dimensione}
  Let $P\in \pos(n)$. The set $\pfl(\mathbb{R})$ is a differentiable manifold of dimension $n(n-1)-|T_P|$.
\end{cor}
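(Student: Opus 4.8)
The plan is to realize $\pfl(\mathbb{R})$ as the homogeneous space $\gl(n,\mathbb{R})/I^*(P;\mathbb{R})$ given by Theorem \ref{fl omogeneo}, and then compute its dimension as $\dim \gl(n,\mathbb{R}) - \dim I^*(P;\mathbb{R})$. First I would recall that $\gl(n,\mathbb{R})$ is an open subset of $\mathrm{Mat}(n,\mathbb{R})$, hence a Lie group of dimension $n^2$, and that $I^*(P;\mathbb{R})$ is the open subset of the vector space $I(P;\mathbb{R})$ consisting of invertible matrices. It is nonempty (it contains $\id_n$) and open in $I(P;\mathbb{R})$, so it is a Lie subgroup of dimension equal to $\dim_{\mathbb{R}} I(P;\mathbb{R})$. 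By the definition of the incidence algebra, a matrix $A \in I(P;\mathbb{R})$ has free entries exactly at positions $(i,j)$ with $i=j$ or $(i,j)\in T_P$; hence $\dim_{\mathbb{R}} I(P;\mathbb{R}) = n + |T_P|$.

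Next I would invoke the standard fact that if $G$ is a Lie group and $H \leq G$ a closed subgroup, then $G/H$ is a smooth manifold of dimension $\dim G - \dim H$; the $\gl(n,\mathbb{R})$-action on $\pfl(\mathbb{R})$ from the proof of Theorem \ref{fl omogeneo} is transitive with isotropy group $I^*(P;\mathbb{R})$ at the standard flag $F_e^P$, so $\pfl(\mathbb{R}) \cong \gl(n,\mathbb{R})/I^*(P;\mathbb{R})$ inherits this manifold structure. Therefore
\[
\dim \pfl(\mathbb{R}) = n^2 - (n + |T_P|) = n(n-1) - |T_P|,
\]
which is the claimed formula.

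One point that deserves care is verifying that $I^*(P;\mathbb{R})$ is closed in $\gl(n,\mathbb{R})$: this holds because $I(P;\mathbb{R})$ is cut out of $\mathrm{Mat}(n,\mathbb{R})$ by the vanishing of certain coordinates, hence is a closed linear subspace, and $I^*(P;\mathbb{R}) = I(P;\mathbb{R}) \cap \gl(n,\mathbb{R})$ is the intersection of this closed set with $\gl(n,\mathbb{R})$, so it is closed in the subspace topology of $\gl(n,\mathbb{R})$. I would also note, as a sanity check, that this recovers the well-known dimension of the real flag manifold: for $P = c_n$ we have $|T_{c_n}| = \binom{n}{2}$, so $\dim \mathrm{Fl}_n(\mathbb{R}) = n(n-1) - \binom{n}{2} = \binom{n}{2}$, and for $P = t_n$ we get $|T_{t_n}| = 0$, giving $\dim \mathrm{Fl}_{t_n}(\mathbb{R}) = n(n-1)$, matching the dimension of $X_{n,n}$ from \cite{A flag variety for the Delta Conjecture}.

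The main obstacle, such as it is, is not a deep one: it is simply to pin down $\dim I(P;\mathbb{R})$ correctly, i.e.\ to count the free parameters of an element of the incidence algebra, which by definition are the $n$ diagonal entries together with the off-diagonal entries indexed by $T_P$, giving $n + |T_P|$. Alternatively, if one prefers to avoid the Lie-group quotient machinery, one can cover $\pfl(\mathbb{R})$ by explicit affine charts — translating the arguments in the proof of Proposition \ref{Prop complementato} — and read off the dimension directly from the number of free entries in a unipotent complement; but the homogeneous-space argument is the cleanest route.
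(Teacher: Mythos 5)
Your proposal is correct and follows essentially the same route as the paper: identify $\pfl(\mathbb{R})$ with $\gl(n,\mathbb{R})/I^*(P;\mathbb{R})$ via Theorem \ref{fl omogeneo}, note that $I^*(P;\mathbb{R})$ is a closed subgroup (cut out by vanishing of entries) so the closed-subgroup and quotient-manifold theorems apply, and compute $n^2-(n+|T_P|)=n(n-1)-|T_P|$. Your explicit counting of the free entries of $I(P;\mathbb{R})$ and the sanity checks for $P=c_n$, $t_n$ are just slightly more detailed versions of what the paper does.
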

\begin{proof}
  Notice that $I^*(P;\mathbb{R})$ is a closed subgroup of the Lie group $\gl(n;\mathbb{R})$; in fact an incidence group is defined by the vanishing of suitable entries, depending on $P$. By the closed-subgroup theorem (see, e.g. \cite[Theorem~9.3.7]{Structure and geometry of Lie groups}), $I^*(P;\mathbb{R})$ is a Lie subgroup and, by \cite[Theorem~10.1.10]{Structure and geometry of Lie groups}, the quotient $\gl(n,\mathbb{R})/I^*(P;\mathbb{R})$ has a unique real manifold structure.

 Since the Lie algebra of $I^*(P;\mathbb{R})$ is the Lie algebra of the incidence algebra $I(P;\mathbb{R})$ and its dimension is $|P|+|T_P|$, we obtain the stated formula (see, e.g.\cite[Corollary~10.1.12]{Structure and geometry of Lie groups}).
\end{proof}


\begin{oss}
  By Theorem \ref{fl omogeneo}, there exists a canonical projection $\pfl(\C)\rightarrow \mathrm{Fl}_n(\C)$ whose fibers
  are affine spaces of dimension $\left|[n]^2_< \setminus T_P\right|$. It follows that this projection is a homotopy equivalence.
\end{oss}

By Theorem \ref{fl omogeneo} we can deduce the cardinality of the set of $P$-flags on a finite field of $q$ elements.
\begin{cor} \label{cardinalita bandiere} Let $P\in \pos(n)$. Then
  $$|\pfl(\F_q)|=q^{\frac{n(n-1)}{2}-|T_P|}[n]_q!.$$
\end{cor}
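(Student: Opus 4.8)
The plan is to derive the count directly from the bijection of Theorem~\ref{fl omogeneo}, so that $|\pfl(\F_q)| = |\gl(n,\F_q)/I^*(P;\F_q)| = |\gl(n,\F_q)| / |I^*(P;\F_q)|$, the last equality being valid since $\gl(n,\F_q)$ is a finite group and $I^*(P;\F_q)$ a subgroup. Thus the whole problem reduces to computing the two cardinalities on the right.

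First I would recall the standard fact that $|\gl(n,\F_q)| = \prod_{i=0}^{n-1}(q^n - q^i) = q^{\binom{n}{2}}\prod_{k=1}^{n}(q^k-1)$; rewriting $\prod_{k=1}^n (q^k-1) = (q-1)^n [n]_q!$ using $[k]_q = \frac{q^k-1}{q-1}$, this gives $|\gl(n,\F_q)| = q^{\binom{n}{2}}(q-1)^n [n]_q!$. Next I would count $|I^*(P;\F_q)|$. An element $A \in I(P;\F_q)$ is a matrix supported on the diagonal together with the positions $(i,j)$ with $(i,j)\in T_P$; as noted in the excerpt just before Definition~\ref{complete flag}, $I(P;\F)$ is a subalgebra of the upper-triangular matrices, so $A$ is invertible if and only if all its diagonal entries are nonzero. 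Hence the free parameters are: $n$ diagonal entries, each ranging over $\F_q^\times$, contributing $(q-1)^n$; and $|T_P|$ strictly-above-diagonal entries, each ranging freely over $\F_q$, contributing $q^{|T_P|}$. Therefore $|I^*(P;\F_q)| = q^{|T_P|}(q-1)^n$.

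Dividing, the factors $(q-1)^n$ cancel and we obtain
$$
|\pfl(\F_q)| \;=\; \frac{q^{\binom{n}{2}}(q-1)^n[n]_q!}{q^{|T_P|}(q-1)^n} \;=\; q^{\frac{n(n-1)}{2} - |T_P|}\,[n]_q!,
$$
which is the claimed formula. The only point requiring a touch of care is the characterization of invertibility inside $I(P;\F_q)$: because $P \in \pos(n)$ has the identity as a linear extension, every $A \in I(P;\F_q)$ is upper triangular, so $\det A = \prod_i A_{ii}$ and invertibility is exactly the non-vanishing of the diagonal entries; this makes the parameter count above rigorous and is really the one substantive step. Everything else is the well-known order formula for $\gl(n,\F_q)$ together with the translation between $\prod(q^k-1)$ and $[n]_q!$.
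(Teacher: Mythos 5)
Your proof is correct and follows essentially the same route as the paper: apply the bijection of Theorem \ref{fl omogeneo}, use the standard order formula for $\gl(n,\F_q)$, count $|I^*(P;\F_q)|=(q-1)^nq^{|T_P|}$, and divide. You merely spell out the details (invertibility in $I(P;\F_q)$ equals non-vanishing of the diagonal, and the rewriting $\prod_{k=1}^n(q^k-1)=(q-1)^n[n]_q!$) that the paper leaves implicit.
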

\begin{proof}
  First of all recall the well-known formula $$|\gl(n;\F_q)|={\prod \limits_{i=0}^{n-1}(q^n-q^i)}=q^{\frac{n(n-1)}{2}}\prod \limits_{i=1}^{n}(q^i-1).$$ It is clear that
  $|I^*(P;\F_q)|=(q-1)^nq^{|T_P|}$. Then the result follows from Theorem \ref{fl omogeneo}.
\end{proof}

The following proposition reveals a duality phenomenon, which does not appear in the classical case, since a chain $c_n$ is self-dual (see Definition \ref{dual}).

\begin{prop} \label{dualita duale}
  Let $P\in \pos(n)$. Then we have a bijection
  $$\mathrm{Fl}_P^*: \mathrm{Fl}_{\mathrm{P}}(\F) \rightarrow \mathrm{Fl}_{\mathrm{P}^*}(\F)$$ defined by setting
  $$\mathrm{Fl}_P^*(F)_i=\spn_{\F}\left\{v_{n+1-j}: j \in i_{P^*}^\downarrow\right\},$$
  for all $i\in [n]$, $F \in \pfl(\F)$, where $\left\{v_1,\ldots,v_n\right\}$ is an $F$-adapted basis of $V$.

\end{prop}
\begin{proof}

Let $F \in \pfl(\F)$ and $\left\{v_1,\ldots,v_n\right\}$ be an $F$-adapted basis of $V$.
 Let $w_i:=v_{n+1-i}$, for all $i\in [n]$; therefore, by Definition \ref{dual}, $\left\{w_1,\ldots,w_n\right\}$ is an $\mathrm{Fl}^*(F)$-adapted basis of $V$.
It is clear by construction that $\mathrm{Fl}_{P^*}^* \circ \mathrm{Fl}_P^*$  and $\mathrm{Fl}_P^*  \circ \mathrm{Fl}_{P^*}^*$
 are the identity on $\pfl(\F)$ and $\mathrm{Fl}_{\mathrm{P}^*}(\F)$, respectively.
\end{proof}

In the example below we present in a particular case the duality in Proposition \ref{dualita duale}.

\begin{ex} \label{esempio poset V} Given a positive integer $n$, the $n$-th \emph{configuration space} of a set $X$ is
$$\cs_n[X]:=\left\{(x_1,\ldots,x_n)\in X^n: i \neq j \Rightarrow x_i \neq x_j \right\}.$$
Unless otherwise specified, the symbol $\simeq$ stands for a bijection.

  Let $P\in \pos(3)$ be the poset whose Hasse diagram is the one on the left in the following figure.
  The Hasse diagram on the right is the one of $P^*$.
\begin{center}
 \begin{tikzpicture}[scale=.7]
  \node (one) at (0,2) {$3$};
  \node (b) at (-1,0) {$1$};
  \node (c) at (1,0) {$2$};
  \draw (one) -- (b) -- (one) -- (c);
\end{tikzpicture} $\,\,\,\,\,\,\,\,\,\,$
\begin{tikzpicture}[scale=.7]
  \node (a) at (0,0) {$1$};
  \node (b) at (-1,2) {$2$};
  \node (c) at (1,2) {$3$};
  \draw (a) -- (b)  (a) -- (c);
\end{tikzpicture}
  \end{center}
  Let $\left\{v_1,v_2,v_3\right\}$ be a basis of $V$, $V_1 := \spn_{\F}\left\{v_1\right\}$, $V_2:=\spn_{\F}\left\{v_2\right\}$
  and $V_3:=V$. Then $F:=(V_1,V_2,V_3) \in \pfl(\F)$  and
  $$\mathrm{Fl}_P^* (F)=\left(\spn_{\F}\left\{v_3\right\},\spn_{\F}\left\{v_2,v_3\right\}, \spn_{\F}\left\{v_1,v_3\right\}\right). $$
  Moreover, it is immediate to check that $\pfl(\F)\simeq \cs_2\left[\mathbb{P}(\F^3)\right]$ and $\mathrm{Fl}_{\mathrm{P}^*}(\F)\simeq \cs_2\left[\gr_{\F}(2,3)\right]\simeq \cs_2\left[\mathbb{P}(\F^3)\right]$.
\end{ex}

%
%
%

\begin{oss} \label{oss spanning configurations}
  We observe that $\pfl(\C)$ is a subset of the moduli space of spanning configurations $X_{\alpha,n}$, introduced in \cite{Spanning subspace configurations},
with $\alpha=(|1^\downarrow|,\ldots,|n^\downarrow|)$.
Moreover $\mathrm{Fl}_{t_n}(\C)=X_{1^n,n}$, where $1^n=(1,1,\ldots,1)\in [1]^n$.
Notice that $\mathrm{Fl}_{t_n}(\C)$ is also the moduli space $X_{n,n}$ of $n$ independent lines in $\C^n$ of \cite{A flag variety for the Delta Conjecture}.
\end{oss}

\subsection{$(Q,P)$-cells} \label{QPcelle}

In this section we consider the left action of the incidence group $I^*(Q;\F)$ on  $\pfl(\F)$, where $P,Q\in \pos(n)$. For $P=Q=c_n$, the orbits of this action are the classical Schubert cells of the flag variety, which are indexed by the elements of the symmetric group $S_n$. In Proposition \ref{prop orbite} we prove that for other choices of $Q$ and $P$, the action of $I^*(Q;\mathbb{R})$ on $\pfl(\R)$ has infinitely many orbits. For other general results on infiniteness of double quotients see for instance \cite{Duck}, \cite{Gandini} and references therein.
Nevertheless we consider a finite subset of these orbits, corresponding to permutations in $S_n$, which have a particularly nice description as in the classical case.

\begin{prop} \label{prop orbite}
 The double quotient $I^*(Q;\mathbb{R})\backslash \gl(n;\mathbb{R})\slash I^*(P;\mathbb{R})$ is finite if and only if
 $P=Q=c_n$.
\end{prop}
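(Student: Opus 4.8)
The plan is to establish the two implications separately. For the easy direction, if $P=Q=c_n$, then the classical Bruhat decomposition $\gl(n,\F)=\bigsqcup_{\sigma\in S_n}B\sigma B$ (with $B=I^*(c_n;\F)$) shows that the double quotient is indexed by $S_n$, hence finite; this is standard and I would simply cite it via the material recalled in Section 2. The substance is the converse: I must show that if $\{P,Q\}\neq\{c_n,c_n\}$, i.e.\ at least one of $P,Q$ is strictly below $c_n$ in $\pos(n)$, then $I^*(Q;\R)\backslash\gl(n;\R)/I^*(P;\R)$ is infinite. The key reduction is the monotonicity $I^*(P;\R)\subseteq I^*(c_n;\R)$ (and likewise for $Q$) noted after the definition of incidence groups: a surjection $I^*(Q;\R)\backslash\gl(n;\R)/I^*(P;\R)\twoheadrightarrow I^*(c_n;\R)\backslash\gl(n;\R)/I^*(P;\R)$ exists, so it suffices to treat the case $Q=c_n$ and $P<c_n$ (the case $P=c_n$, $Q<c_n$ then follows by the transpose/duality symmetry $A\mapsto w_0A^Tw_0$ used in Proposition \ref{dualita duale}, which swaps the roles of the left and right acting groups, exchanging $P$ and $Q$ up to the involution $*$).

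So the heart of the argument is: for $P<c_n$, the quotient $B\backslash\gl(n;\R)/I^*(P;\R)$ is infinite, equivalently the set of $I^*(P;\R)$-orbits on the flag variety $\mathrm{Fl}_n(\R)=\gl(n;\R)/B$ is infinite. Since $P<c_n$, there is a pair $(i,j)\in[n]^2_<$ with $i\not<_P j$; by choosing such a pair with $j-i$ minimal (or by passing to a cover in the complement) one can arrange that the incidence algebra $I(P;\R)$ is missing the elementary entry in a position that, together with the diagonal, would generate a one-parameter family of non-equivalent configurations. Concretely, I would exhibit an explicit one-parameter family of flags — e.g.\ take the two coordinate lines $\R e_i,\R e_j$ and the line $\R(e_i+te_j)$ for $t\in\R$, completed to a flag in a fixed way — and show that the cross-ratio-type invariant $t$ (up to finitely many values) is preserved by both the left $B$-action and the right $I^*(P;\R)$-action, because the missing entry $(i,j)$ in $I(P;\R)$ is exactly what would be needed to move $t$. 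Counting these invariants over $\R$ gives infinitely many orbits. This is the same mechanism by which, classically, the torus $I^*(t_n;\R)$ acting on $\mathrm{Fl}_n(\R)$ has moduli (for $n\geq 3$), generalized to an arbitrary deficient poset.

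The main obstacle I anticipate is making the "invariant $t$ is genuinely non-trivial" step clean for a general poset $P\in\pos(n)$ with $P<c_n$: one must verify that some off-diagonal position $(i,j)$ forbidden in $I(P;\R)$ cannot be compensated by the other allowed entries of $I^*(P;\R)$ acting on the right, nor by $B$ acting on the left. The right tool is to use the standard $P$-flag $F_e^P$ of Section 3 and the description of $I^*(P;\R)$ as its isotropy group (Theorem \ref{fl omogeneo}): the complement graph $G(P,c_n)$ records precisely the positions where $I^*(P;\R)$ fails to fill out $B$, and one edge of $G(P,c_n)$ suffices to build the family. I would also need to handle the degenerate small cases ($n=1,2$) separately, where $\pos(n)$ has no element strictly between $t_n$ and $c_n$ other than possibly $t_2<c_2$ — and indeed for $n=2$ one checks directly that $B\backslash\gl(2;\R)/I^*(t_2;\R)$ is already infinite (it records the "slope" of a line, a $\mathbb{P}^1$'s worth of orbits minus finitely many), consistent with the statement. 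Assembling the explicit family, its invariant, and the bi-invariance check is the bulk of the work; everything else is bookkeeping with the poset combinatorics already set up in Section 2.
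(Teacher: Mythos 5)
Your overall architecture (Bruhat decomposition for the easy direction, the surjection onto the coarser quotient with $Q=c_n$, the transpose trick as in Proposition \ref{dualita duale}) is sound and genuinely different from the paper's route: the paper never constructs invariants, it compares the maximal possible dimension of an $I^*(Q;\R)$-orbit with $\dim\bigl(\gl(n;\R)/I^*(P;\R)\bigr)=n(n-1)-|T_P|$ from Corollary \ref{dimensione}. But the heart of your plan --- that a forbidden entry $(i,j)$ of $I(P;\R)$ produces a continuous parameter $t$ invariant under $B$ on one side and $I^*(P;\R)$ on the other --- is where the gap lies, and your own sanity check already fails: $I^*(c_2;\R)\backslash\gl(2;\R)/I^*(t_2;\R)$ is \emph{finite}. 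Via $g\mapsto g^{-1}$ it is the set of $I^*(t_2;\R)$-orbits on $\mathbb{P}(\R^2)$, and these are $\{[e_1]\}$, $\{[e_2]\}$ and the single orbit $\{[e_1+te_2]:t\neq 0\}$, because the diagonal torus rescales $t$ by an arbitrary nonzero real: three classes, not a $\mathbb{P}^1$'s worth. The ``slope'' is not an invariant of the two-sided action; the entries you did \emph{not} delete, together with the Borel acting on the other side, absorb it.

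The same absorption defeats the general mechanism: deleting relations from $c_n$ need not create a modulus. For instance, take $n=3$ and $P$ with the single relation $1<_P2$; then $I^*(P;\R)$ is the stabilizer in $\gl(3;\R)$ of the configuration $\langle e_1\rangle\subset\langle e_1,e_2\rangle$ together with the line $\langle e_3\rangle$, and a short computation (the stabilizer of a generic point of $\mathbb{P}(\R^3)$ in this group is one-dimensional and still moves the pencil of lines through that point) shows it has finitely many orbits on $\mathrm{Fl}_3(\R)$ --- a direct count gives thirteen, one of them open --- so $I^*(c_3;\R)\backslash\gl(3;\R)/I^*(P;\R)$ is finite although $P\neq c_3$. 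Thus the step ``one edge of $G(P,c_n)$ suffices to build the family'' is not merely unproved but false, and no choice of completion of your configuration can make $t$ bi-invariant in general. Be aware that these examples also press on the statement itself and on the paper's own argument (the bound $d\leq n(n-1)/2$ there rests on isotropy groups having dimension at least $n$, which the open orbits above violate), so you should settle the small cases before trusting either route; but as written, your proposal has a genuine gap at its central invariance claim, whereas the paper's proof is a global dimension comparison, not an explicit family of inequivalent flags.
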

\begin{proof}
 It is well known that if $P=Q=c_n$ then the double quotient considered is in bijection with the symmetric group $S_n$.

 Let $Q$ be any poset and $P\neq c_n$.
 The maximal possible dimension $d$ of an orbit of $I^*(Q;\mathbb{R})$ is reached when $Q=c_n$ and the isotropy group is the group of invertible diagonal matrices $I^*(t_n;\F)$; then   $d=\dim(I^*(c_n;\mathbb{R}))-n=\frac{n(n-1)}{2}$ by \cite[Corollary~10.1.12]{Structure and geometry of Lie groups}. By Corollary \ref{dimensione}, $\dim(\gl(n;\mathbb{R})\slash I^*(P;\mathbb{R}))=n(n-1)-|T_P|$. Since $P\neq c_n$, the minimum of $n(n-1)-|T_P|$ is reached when $P$ has exactly two incomparable elements; its value is $n(n-1)-\frac{n(n-1)-2}{2}=\frac{n(n-1)}{2}+1$.
 Therefore $\dim(\gl(n;\mathbb{R})\slash I^*(P;\mathbb{R}))$ is always strictly greater than the dimension of every orbit of $I^*(Q;\mathbb{R})$,
 which implies the infiniteness of the set of such orbits.
\end{proof}
Now we consider a collection of orbits of $I^*(Q;\F)$ on $\pfl(\F)$ which share some properties with the classical Schubert cells of the flag variety.
For any permutation $\sigma \in S_n$, let us define the $P$-flag

$$F_\sigma^P:=\left(\spn_{\F}\left\{e_{\sigma(i)}:i\in 1^\downarrow_P\right\},\ldots,\spn_{\F}\left\{e_{\sigma(i)}:i\in n^\downarrow_P\right\}\right).$$
When $\sigma$ is the identity we recover the standard $P$-flag $F_e^P$.

\begin{dfn}
The $(Q,P)$-cell in $\pfl(\F)$ corresponding to $\sigma \in S_n$ is the orbit
$$C^{Q,P}_\sigma(\F) := \left\{AF_\sigma^P:A\in I^*(Q;\F)\right\}.$$
\end{dfn}

These cells can be described as homogeneous spaces. Before to state this result, we need some definitions.

\begin{dfn}
Let $P,Q \in \pos(n)$ and $\sigma \in S_n$. The poset $[QP]_{\sigma}:=([n],\leqslant_{Q,P,\sigma})$
is defined by setting
  $$i \leqslant_{Q,P,\sigma} j \Leftrightarrow \mbox{$i \leqslant_Q j$ and $\sigma^{-1}(i) \leqslant_P \sigma^{-1}(j)$},$$ for every $i,j\in [n]$.
\end{dfn} Notice that $[QP]_\sigma \leqslant Q$, for every $P,Q\in \pos(n)$, $\sigma \in S_n$.

\begin{ex}
Let $Q\in \pos(n)$. It is clear that $[Qc_n]_e=Q$ and $[Qt_n]_{\sigma}=t_n$ for all $\sigma\in S_n$.
Moreover $[Qc_n]_{w_0}=t_n$, where $w_0=n \cdots 321$.
\end{ex}

\begin{oss} \label{oss forest-like}
  The Hasse diagram of the poset $[c_nc_n]_{\sigma}$ is the graph $G_\sigma$ defined in \cite{Forest-like permutations}.
  This is also related to the inversion graph of the permutation $\sigma$ (see \cite{OhPostnikov}).
\end{oss}

\begin{oss}
  The induced subposet $\left\{[c_nc_n]_{\sigma}:\sigma \in S_n\right\} \subseteq \pos(n)$ is isomorphic to the dual of the right $\leqslant_R$ weak order of $S_n$.
  In fact, by \cite[Proposition~3.1.3]{BB}, $\sigma \leqslant_R \tau$ if and only if $T_L(\sigma) \subseteq T_L(\tau)$, where $T_L(\sigma)$ is the set of left inversions of $\sigma$. This is equivalent to $[c_nc_n]_\tau \leqslant [c_nc_n]_\sigma$.
\end{oss}

\begin{dfn}
Let $\sigma \in S_n$. The $(Q,P)$-{\em inversion number} $\inve_{Q,P}(\sigma)$ of $\sigma$ is defined by
\begin{eqnarray*}
  \inve_{Q,P}(\sigma) &:=& |\left\{(i,j)\in [n]^2_<: i<_Qj,\sigma^{-1}(i)\nless_P \sigma^{-1}(j)\right\}|.
\end{eqnarray*}
\end{dfn}

For $Q=P=c_n$ this function gives the usual inversion number $\mathrm{inv}(\sigma)$ of a permutation in $S_n$.

\begin{thm}\label{canonical} Let $P,Q\in \pos(n)$ and $\sigma\in S_n$. Then we have the following bijections:
  $$C^{Q,P}_\sigma(\F) \simeq I^*(Q;\F)/I^*([QP]_\sigma;\F) \simeq \F^{\inve_{Q,P}(\sigma)}.$$
\end{thm}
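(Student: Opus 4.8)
The plan is to establish the two bijections by identifying the isotropy group of the $P$-flag $F_\sigma^P$ under the action of $I^*(Q;\F)$, and then applying Proposition \ref{Prop complementato} to the pair $[QP]_\sigma \leqslant Q$.

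First I would compute the stabilizer of $F_\sigma^P$ inside $I^*(Q;\F)$. Recall $F_\sigma^P=(V_{\sigma(1^\downarrow_P)},\dots,V_{\sigma(n^\downarrow_P)})$, where for a subset $A$ I write $\sigma(A):=\{\sigma(i):i\in A\}$. A matrix $A\in I^*(Q;\F)$ fixes this tuple if and only if $A\,V_{\sigma(i^\downarrow_P)}=V_{\sigma(i^\downarrow_P)}$ for every $i\in[n]$. Since $A$ is upper triangular with respect to a linear extension of $Q$ (which need not be the identity in general — here one should be careful and use that $\id\in L_Q$, so $A$ is genuinely upper triangular), the condition that $A$ preserves each coordinate subspace $V_{\sigma(i^\downarrow_P)}=\spn\{e_{\sigma(j)}:j\leqslant_P i\}$ translates, entry by entry, into: whenever $A_{k\ell}\neq 0$ is an admissible entry (so $k\leqslant_Q \ell$ in the sense $k<\ell$ or $(k,\ell)\in T_Q$, plus the diagonal), the column index $\ell$ lies in $V_{\sigma(i^\downarrow_P)}$ forces the row index $k$ to lie in it as well, for all $i$. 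Writing $\ell=\sigma(j)$, $k=\sigma(j')$, this says: $j\leqslant_P i\Rightarrow j'\leqslant_P i$ for all $i$, i.e. $j'\leqslant_P j$, i.e. $\sigma^{-1}(k)\leqslant_P\sigma^{-1}(\ell)$. So the stabilizer consists exactly of matrices supported on entries $(k,\ell)$ with $k\leqslant_Q\ell$ and $\sigma^{-1}(k)\leqslant_P\sigma^{-1}(\ell)$, which is precisely $I^*([QP]_\sigma;\F)$. This gives the first bijection $C^{Q,P}_\sigma(\F)\simeq I^*(Q;\F)/I^*([QP]_\sigma;\F)$ by the orbit–stabilizer correspondence.

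For the second bijection, I would apply Proposition \ref{Prop complementato} with the pair $[QP]_\sigma\leqslant Q$: one needs $[QP]_\sigma$ complemented in $Q$, i.e. that the edge set $T_Q\setminus T_{[QP]_\sigma}=\{(i,j)\in[n]^2_<: i<_Qj,\ \sigma^{-1}(i)\nless_P\sigma^{-1}(j)\}$ is the comparability graph of a poset. Granting this, Proposition \ref{Prop complementato} yields $I^*(Q;\F)/I^*([QP]_\sigma;\F)\simeq U(([QP]_\sigma)^c(Q);\F)$, and the latter unipotent group is an affine space over $\F$ of dimension equal to the number of edges $|T_Q\setminus T_{[QP]_\sigma}|=\inve_{Q,P}(\sigma)$ (a unipotent incidence group is parametrized freely by its above-diagonal admissible entries). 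Composing gives $C^{Q,P}_\sigma(\F)\simeq\F^{\inve_{Q,P}(\sigma)}$.

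The main obstacle is verifying that $[QP]_\sigma$ is complemented in $Q$ — that the complementary edge set actually forms a partial order (transitivity is the issue). I expect this to follow from a short combinatorial argument: if $(i,j)$ and $(j,k)$ are complementary edges with $i<j<k$, then $i<_Qk$ by transitivity of $Q$, and one must rule out $\sigma^{-1}(i)<_P\sigma^{-1}(k)$; here one uses that from $\sigma^{-1}(i)\nless_P\sigma^{-1}(j)$ together with $i<_Qj$ one extracts structural information, and similarly for the other pair, combining via transitivity of $P$ to reach a contradiction or to directly confirm $\sigma^{-1}(i)\nless_P\sigma^{-1}(k)$. If complementedness should fail in full generality, the fallback is to prove directly that the quotient $I^*(Q;\F)/I^*([QP]_\sigma;\F)$ is in bijection with the affine space of solutions to the relevant linear system (as in the proof of Proposition \ref{Prop complementato}, where the essential input is solvability of a triangular linear system over the subposet $T_P$ of $P\times P$, not complementedness per se), so that the argument goes through with $\inve_{Q,P}(\sigma)$ free parameters regardless; I would present the proof in that robust form to avoid relying on a complementedness claim that may need its own verification.
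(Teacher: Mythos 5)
Your proposal is correct and follows essentially the paper's proof: identify the isotropy group of $F^P_\sigma$ in $I^*(Q;\F)$ as $I^*([QP]_\sigma;\F)$ (column by column, exactly as you do), and then count the coset space as an affine space by normalizing representatives with $1$'s on the diagonal and $0$'s in the positions of $T_{[QP]_\sigma}$, leaving the $|T_Q\setminus T_{[QP]_\sigma}|=\inve_{Q,P}(\sigma)$ remaining admissible entries as free parameters --- the paper likewise argues ``similarly to Proposition \ref{Prop complementato}'' rather than invoking it. Your instinct to avoid the complementedness hypothesis is right: $T_Q\setminus T_{[QP]_\sigma}$ need not be transitively closed (e.g.\ $Q=c_3$, $\sigma=e$ and $P\in\pos(3)$ with the single relation $1<_P3$ give complementary edges $(1,2)$ and $(2,3)$ but not $(1,3)$), which is why the paper records complementedness only under the strict Sperner hypothesis in Corollary \ref{corollario unipotente}, so the ``robust form'' you settle on is precisely the paper's argument.
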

\begin{proof}
  Let $F^P_{\sigma}=(V_1,\ldots,V_n)$, where $V_j=\spn_{\F}\left\{e_{\sigma(i)}:i\in j^\downarrow_P\right\}$, for all $j\in [n]$.
  Let $A\in I^*(Q;\F)$ be an element of the isotropy group of $F^P_{\sigma}$ under the action $AF^P_{\sigma}=(AV_1,\ldots,AV_n)$.
  We prove that $A \in I^*([QP]_\sigma;\F)$.

  We have that $V_1=\spn_{\F}\left\{e_{\sigma(1)}\right\}$ and $AV_1=V_1$ implies $A_{i,\sigma(1)}=0$ for all $i<_Q \sigma(1)$. Again $AV_2=V_2$ implies
 $A_{i,\sigma(2)}=0$ for all $i<_Q \sigma(2)$ such that $i\not \in \left\{\sigma(k):k\in 2^\downarrow_P\right\}$.
In general, $AV_j=V_j$ implies $A_{i,\sigma(j)}=0$ for all $i<_Q \sigma(j)$ such that $i\not \in \left\{\sigma(k):k\in j^\downarrow_P\right\}$.
Therefore the isotropy group of $F^P_\sigma$ is contained in the set

$$\bigcap \limits_{j=1}^n\left\{A \in I^*(Q;\F): A_{i,\sigma(j)}=0, \, \forall \, i\not \in \left\{\sigma(k):k\in j^\downarrow_P\right\}\right\}=$$
$$\bigcap \limits_{j=1}^n\left\{A \in I^*(Q;\F): A_{i,j}=0, \, \forall \, i\not \in \left\{k:\sigma^{-1}(k)\in [\sigma^{-1}(j)]^\downarrow_P\right\}\right\} =I^*([QP]_\sigma;\F).$$
By definition of $F^P_\sigma$ and $[QP]_\sigma$, it follows that $I^*([QP]_\sigma;\F)$ is contained in the isotropy group of $F^P_\sigma$, and the first bijection is proved.

A coset of $A\in I^*(Q;\F)$ is determined setting $A_{i,i}=1$ for all $i\in [n]$ and $A_{ij}=0$ whenever $(i,j)\in T_{[QP]_{\sigma}}$.
Since $|T_Q\setminus T_{[QP]_{\sigma}}|=\inve_{Q,P}(\sigma)$, the second bijection follows.
  \end{proof}

Immediate consequences of Theorem \ref{canonical} are the following statements.

\begin{cor}
  Let $\F_q$ be a finite field. Then $$|C^{Q,P}_\sigma(\F_q)|=q^{\inve_{Q,P}(\sigma)},$$ for all $\sigma \in S_n$.
\end{cor}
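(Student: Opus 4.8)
The statement to prove is the corollary computing $|C^{Q,P}_\sigma(\F_q)| = q^{\inve_{Q,P}(\sigma)}$ for a finite field $\F_q$. The plan is entirely routine: this is a direct application of Theorem \ref{canonical}. First I would invoke the bijection $C^{Q,P}_\sigma(\F) \simeq \F^{\inve_{Q,P}(\sigma)}$ established in Theorem \ref{canonical}, which holds over any field $\F$. Specializing $\F = \F_q$, the set $C^{Q,P}_\sigma(\F_q)$ is therefore in bijection with $\F_q^{\inve_{Q,P}(\sigma)}$, which as a finite set has cardinality $q^{\inve_{Q,P}(\sigma)}$ since $|\F_q| = q$. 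That is the entire argument.

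Since there is essentially no content beyond citing the earlier theorem, I would keep the proof to a single sentence. If one wanted to be slightly more self-contained, one could instead use the middle bijection $C^{Q,P}_\sigma(\F_q) \simeq I^*(Q;\F_q)/I^*([QP]_\sigma;\F_q)$ together with the count $|I^*(P;\F_q)| = (q-1)^n q^{|T_P|}$ from the proof of Corollary \ref{cardinalita bandiere}, giving $|I^*(Q;\F_q)|/|I^*([QP]_\sigma;\F_q)| = q^{|T_Q| - |T_{[QP]_\sigma}|} = q^{\inve_{Q,P}(\sigma)}$, where the last equality is the identity $|T_Q \setminus T_{[QP]_\sigma}| = \inve_{Q,P}(\sigma)$ noted in the proof of Theorem \ref{canonical} (using $[QP]_\sigma \leq Q$, so $T_{[QP]_\sigma} \subseteq T_Q$). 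Both routes are immediate.

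There is no real obstacle here: the work has already been done in Theorem \ref{canonical}, and the corollary is merely the observation that an $m$-dimensional affine space over $\F_q$ has $q^m$ points. I would write:

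\begin{proof}
By Theorem \ref{canonical} there is a bijection $C^{Q,P}_\sigma(\F_q) \simeq \F_q^{\inve_{Q,P}(\sigma)}$. Since $|\F_q^{\inve_{Q,P}(\sigma)}| = q^{\inve_{Q,P}(\sigma)}$, the claim follows.
\end{proof}
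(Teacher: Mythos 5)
Your proof is correct and matches the paper exactly: the paper states this corollary as an immediate consequence of Theorem \ref{canonical}, which is precisely your one-line argument of specializing the bijection $C^{Q,P}_\sigma(\F) \simeq \F^{\inve_{Q,P}(\sigma)}$ to $\F = \F_q$ and counting points.
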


A poset is said to be \emph{strict Sperner} if it is a graded poset in which all maximum antichains are rank levels.
The next result gives a bijection between a $(Q,P)$-cell $C^{Q,P}_\sigma(\F)$ and the derived algebra of the Lie algebra $I([QP]^c_{\sigma}(Q);\F)$, whenever $P$ is a strict Sperner poset.
\begin{cor} \label{corollario unipotente}
  If $P$ is strict Sperner, then we have a bijection $$C^{Q,P}_\sigma(\F) \simeq U([QP]^c_{\sigma}(Q);\F),$$
  for all $\sigma \in S_n$.
\end{cor}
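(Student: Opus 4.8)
The plan is to combine Theorem~\ref{canonical} with Proposition~\ref{Prop complementato}. By Theorem~\ref{canonical} we already have a bijection $C^{Q,P}_\sigma(\F) \simeq I^*(Q;\F)/I^*([QP]_\sigma;\F)$, so it suffices to produce a bijection $I^*(Q;\F)/I^*([QP]_\sigma;\F) \simeq U([QP]^c_\sigma(Q);\F)$. Since $[QP]_\sigma \leqslant Q$ always holds, Proposition~\ref{Prop complementato} gives exactly this bijection \emph{provided} that $[QP]_\sigma$ is complemented in $Q$, i.e.\ that the graph $G([QP]_\sigma,Q) = ([n], T_Q \setminus T_{[QP]_\sigma})$ is the comparability graph of a poset. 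So the whole statement reduces to the single claim: if $P$ is strict Sperner, then $[QP]_\sigma$ is complemented in $Q$, for every $Q \in \pos(n)$ and $\sigma \in S_n$.

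To prove that claim, I would unwind the definitions. We have $(i,j) \in T_Q \setminus T_{[QP]_\sigma}$, for $i < j$, precisely when $i <_Q j$ but $\sigma^{-1}(i) \nless_P \sigma^{-1}(j)$. Being a comparability graph of a poset is equivalent to being transitively orientable in a way compatible with the ambient linear order; concretely, since all our edge sets live in $[n]^2_<$, the set $E \subseteq [n]^2_<$ is the comparability graph of a poset in $\pos(n)$ iff its transitive closure (as a relation) introduces no new pairs, i.e.\ iff $E$ is already transitive as a relation on $[n]$. So I must show: if $i <_Q j <_Q k$ with $\sigma^{-1}(i) \nless_P \sigma^{-1}(j)$ and $\sigma^{-1}(j) \nless_P \sigma^{-1}(k)$, then $\sigma^{-1}(i) \nless_P \sigma^{-1}(k)$. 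Here the three values $a := \sigma^{-1}(i)$, $b := \sigma^{-1}(j)$, $c := \sigma^{-1}(k)$ are three distinct elements of $[n]$, and $i <_Q j <_Q k$ is genuinely used only to know $i,j,k$ are a $3$-chain in $Q$ (which, recall, forces $i < j < k$ as integers since $Q \in \pos(n)$). The hypothesis is $a \not<_P b$ and $b \not<_P c$, and we want $a \not<_P c$. This is where strict Spernerness of $P$ must enter: in a general poset one can certainly have $a \not<_P b$, $b \not<_P c$, yet $a <_P c$ (take $a <_P c$ with $b$ incomparable to both). The point must be that the \emph{specific} triple arising from a $Q$-chain and a permutation is constrained enough, using that the levels of $P$ are exactly its maximum antichains.

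The key input I expect to exploit: in a strict Sperner poset, for the rank function $\rho$, any antichain of size equal to the width must be a rank level, and more usefully, if $x <_P z$ then every element of rank strictly between $\rho(x)$ and $\rho(z)$... no — rather, the relevant fact is a Dilworth/Mirsky-type statement tying the rank function to the chain structure so tightly that $a \not<_P b$ and $b \not<_P c$ with a suitable third condition on $a,c$ forces $a \not<_P c$. I would look for: if $a <_P c$ in a graded poset, pick a saturated chain from $a$ to $c$; it passes through some element $b'$ of each intermediate rank. If additionally $P$ is strict Sperner one can choose which such $b'$ to take within a rank level flexibly. I would then argue that for the triple $(a,b,c)$ coming from our setup, $b$ can be ``inserted'' into such a chain, contradicting $a \not<_P b$ or $b \not<_P c$. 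Making this precise is the main obstacle: isolating exactly which combinatorial feature of strict Sperner posets rules out the bad configuration, and checking it genuinely uses nothing about the triple beyond distinctness (the $Q$-chain condition seems only to guarantee we get three \emph{distinct} indices, so morally the statement being proved about $P$ is: $P$ strict Sperner implies, for all distinct $a,b,c$, that $a \not<_P b$ and $b \not<_P c$ imply $a \not<_P c$ — equivalently, $P$ has no $3$-element induced subposet isomorphic to $a <_P c$ with $b$ incomparable to one of them forming the ``obstruction,'' i.e.\ $<_P$ restricted appropriately is transitive on triples; one should double-check this is actually equivalent to or implied by strict Sperner, as for $P = t_n$ it is vacuous and the corollary recovers a torus orbit, which is consistent).

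Once the claim is established, the corollary is immediate: $[QP]_\sigma$ complemented in $Q$ with complement poset $[QP]^c_\sigma(Q)$, Proposition~\ref{Prop complementato} yields $I^*(Q;\F)/I^*([QP]_\sigma;\F) \simeq U([QP]^c_\sigma(Q);\F)$, and composing with the bijection of Theorem~\ref{canonical} finishes the proof. (As a sanity check on dimensions: $U([QP]^c_\sigma(Q);\F)$ is parametrized by $|T_Q \setminus T_{[QP]_\sigma}| = \inve_{Q,P}(\sigma)$ free entries, matching the $\F^{\inve_{Q,P}(\sigma)}$ in Theorem~\ref{canonical}, so no contradiction arises and this is the expected statement.)
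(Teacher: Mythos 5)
Your overall route is exactly the paper's: compose the bijection $C^{Q,P}_\sigma(\F)\simeq I^*(Q;\F)/I^*([QP]_\sigma;\F)$ of Theorem~\ref{canonical} with Proposition~\ref{Prop complementato}, and reduce everything to showing that $[QP]_\sigma$ is complemented in $Q$, i.e.\ that $T_Q\setminus T_{[QP]_\sigma}$ is transitively closed, i.e.\ that for distinct $a,b,c$ one has $a\nless_P b$ and $b\nless_P c$ imply $a\nless_P c$. The genuine gap is that you never prove this last claim: you flag it yourself as ``the main obstacle,'' sketch a saturated-chain argument that is not carried out, and even leave open whether the claim follows from strict Spernerness at all. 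Since this is precisely the point where the hypothesis on $P$ enters, the proof is incomplete as written. The paper disposes of it in one sentence: in a strict Sperner poset the relation $\nleqslant$ is transitive, whence $T_Q\setminus T_{[QP]_\sigma}=T_{[QP]^c_\sigma(Q)}$ and Proposition~\ref{Prop complementato} applies.

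To close the gap one argues directly from the definition as the paper uses it (every maximal antichain of the graded poset $P$ is a rank level): two incomparable elements lie in a common maximal antichain, hence in a common rank level, so incomparability forces equal rank; since $x<_P y$ forces $\rho(x)<\rho(y)$, one gets $x<_P y$ if and only if $\rho(x)<\rho(y)$, i.e.\ $P$ is an ordinal sum of its rank levels. Then $a\nless_P b$ and $b\nless_P c$ for distinct elements mean $\rho(a)\geqslant\rho(b)\geqslant\rho(c)$, so $a\nless_P c$ — no chain-insertion argument is needed. Your hesitation is warranted in one respect: if ``maximum antichain'' is read as ``antichain of maximum size,'' the claim genuinely fails. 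For instance, take rank levels $\{a,b,x\}$ and $\{c,y\}$ with $a<_P c$, $x<_P c$ and $a,x,b<_P y$: the unique maximum-size antichain is the bottom rank level, yet $a<_P c$ while $b$ is incomparable to both, so $\nless_P$ is not transitive there. So the hypothesis must be read as giving transitivity of $\nleqslant_P$ (equivalently, that $P$ is layered), which is exactly the fact your proposal is missing.
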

\begin{proof}
  By definition, if $P$ is strict Sperner then the poset $[QP]_{\sigma}$ is complemented in $Q$. In fact, in a strict Sperner poset, the relation $\nleqslant$ is transitive. Then $T_Q\setminus T_{[QP]_{\sigma}}=T_{[QP]^c_{\sigma}(Q)}$. Hence the result follows by Proposition \ref{Prop complementato}.
\end{proof}

\section{Incidence stratifications} \label{sezione grassmanniane}

In this section we provide a partition of the projective space $\mathbb{P}(\F^n)$, induced by the action of the incidence group $I^*(Q;\F)$, for any poset $Q \in \pos(n)$.
The orbits of such an action turn out to be in one-to-one correspondence with the elements of the distributive lattice $\JQ$.

This decomposition induces a partition of any subset of a projective space. We investigate the induced partition on Grassmannian varieties, recovering the Schubert cell partition, for $Q=c_n$, and the matroid strata
introduced in \cite{Gelfand}, for $Q=t_n$.

\subsection{$Q$-stratification of a projective space}

Let $Q\in \pos(n)$, $V=\F^n$ and $\mathbb{P}(V)$ its projective space.
The subalgebra $I(Q;\F) \subseteq \End(V)$ has invariant-subspace lattice isomorphic to $\mathcal{J}(Q)$, where
$I(Q;\F)$ acts on the elements of $V$ by left multiplication.
\begin{oss}
  The socle filtration of the action of $I(Q;\F)$ on $V$ is given by $$\soc^i(Q)\simeq\bigoplus\limits_{j\in \max\left[\Psi_Q^i(\varnothing)\right]}\spn_{\F}\left\{e_j\right\},$$ for all $i>0$ such that $\Psi^i_Q(\varnothing) \subsetneq \Psi^{i+1}_Q(\varnothing)$, where $\Psi_Q$ is the function of Definition \ref{fonderflas}.
\end{oss}

Clearly this action carries an action of $I^*(Q,\mathbb{F})$ on $\mathbb{P}(V)$, whose orbits are described in the following theorem.
Recall that $V_I:=\spn_{\F}\left\{e_i:i\in I\right\}$, for any subset $I\subseteq [n]$.

\begin{thm}\label{pspazio}
  An orbit of the action of $I^*(Q;\F)$ on $\mathbb{P}(V)$ is of the form
  $$Q_I(\mathbb{F}):=\mathbb{P}(V_I)\setminus \bigcup\limits_{i \in \mathrm{max}(I)}\mathbb{P}\left(V_{I \setminus \{i\}}\right),$$
   for any $I\in \JQ\setminus\{\varnothing\}$ and the collection of {cells}\footnote{The use of the word \emph{cell} in this article  does not refer in general to affine spaces.} $\{Q_I(\F):I\in \JQ\setminus\{\varnothing\}\}$ is a
    partition of $\mathbb{P}(V)$. 
 The Zariski closure of $Q_I(\C)$ is given by $$\overline{Q_I(\C)}=\biguplus\limits_{H \in \mathcal{J}(I)\setminus \{\varnothing\}} Q_H(\C)=\mathbb{P}\left(V_I\right),$$ for all $I\in \JQ$.
\end{thm}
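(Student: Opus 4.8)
The plan is to analyze the action of $I^*(Q;\F)$ on a nonzero vector $v=\sum_{i\in[n]}a_ie_i$ via its \emph{support} $\mathrm{supp}(v):=\{i:a_i\neq 0\}$. First I would establish the key invariant: for any $A\in I^*(Q;\F)$, the support of $Av$ is the smallest order ideal of $Q$ containing $\mathrm{supp}(v)$, namely $I(v):=\bigcup_{i\in\mathrm{supp}(v)}i^\downarrow_Q$. Indeed, since $A$ is upper triangular with respect to any linear extension of $Q$ and $A_{ij}=0$ unless $i\le_Q j$, the vector $Av=\sum_j a_j(Ae_j)$ has support contained in $I(v)$; it remains an order ideal because $A$ is invertible (its inverse also lies in $I^*(Q;\F)$, so supports cannot shrink strictly, and one checks the leading coefficients along a linear extension stay nonzero). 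Thus the orbit of $[v]$ is contained in $\mathbb{P}(V_{I(v)})\setminus\bigcup_{i\in\max(I(v))}\mathbb{P}(V_{I(v)\setminus\{i\}})$, i.e.\ in $Q_{I(v)}(\F)$: the complement condition precisely says $\mathrm{supp}(v)$ is not contained in any $I(v)\setminus\{i\}$ for $i$ maximal, which is equivalent to $I(v)$ being exactly the ideal generated by $\mathrm{supp}(v)$.

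Next I would prove transitivity of $I^*(Q;\F)$ on each $Q_I(\F)$. Given $[v],[w]$ with $I(v)=I(w)=I$, it suffices (after scaling) to send $v$ to $w$. Pick $m\in\max(I)$; then $m\in\mathrm{supp}(v)\cap\mathrm{supp}(w)$ by the previous paragraph (any maximal element of $I$ must lie in the support, else $I$ would not be generated by the support). Using elementary matrices $\id_n+t\,e_{im}$ with $i<_Q m$ (which lie in $U(Q;\F)\subseteq I^*(Q;\F)$) one adjusts the coordinates indexed by $i\le_Q m$, $i\neq m$, freely; iterating over the maximal elements of $I$ and then inducting down the socle filtration $\soc^i(Q)$ described before the theorem, one moves $v$ to $w$. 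This is essentially the same linear-system argument used in Proposition \ref{Prop complementato} and Theorem \ref{canonical}: the relevant system is triangular and solvable. Since every orbit has the form $Q_{I}(\F)$ for a unique $I\in\JQ\setminus\{\varnothing\}$ (namely $I=I(v)$), and every nonzero $v$ has a well-defined $I(v)$, the cells $\{Q_I(\F):I\in\JQ\setminus\{\varnothing\}\}$ partition $\mathbb{P}(V)$.

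For the closure statement over $\C$, I would argue as follows. The set $\mathbb{P}(V_I)$ is Zariski closed (a linear subspace), and it decomposes as the disjoint union $\biguplus_{H\in\mathcal{J}(I)\setminus\{\varnothing\}}Q_H(\C)$: indeed every $[v]\in\mathbb{P}(V_I)$ has $I(v)\subseteq I$, hence $I(v)$ is an order ideal of the induced subposet on $I$, i.e.\ $I(v)\in\mathcal{J}(I)$, and conversely each such $Q_H$ lies in $\mathbb{P}(V_I)$. So $\mathbb{P}(V_I)=\biguplus_{H\in\mathcal{J}(I)}Q_H(\C)$ is a closed set containing $Q_I(\C)$, giving $\overline{Q_I(\C)}\subseteq\mathbb{P}(V_I)$. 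For the reverse inclusion it suffices to show every $Q_H(\C)$ with $\varnothing\neq H\subseteq I$, $H$ an ideal of $Q$, lies in the closure of $Q_I(\C)$: pick $w$ with $I(w)=H$ and $v_0$ with $I(v_0)=I$; then the family $[w+\epsilon v_0]$ for $\epsilon\neq 0$ lies in $Q_I(\C)$ (its support generates $I$) and tends to $[w]$ as $\epsilon\to 0$. Since $Q_I(\C)$ is irreducible (it is the image of an irreducible variety, namely a torus-like orbit, under a morphism) its closure is the union of the $Q_H(\C)$ for $H$ ranging over $\mathcal{J}(I)\setminus\{\varnothing\}$, which is the claimed formula.

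The main obstacle I anticipate is making the transitivity argument fully rigorous: one must verify that the elementary operations available inside $I^*(Q;\F)$ (which are constrained by the relations of $Q$) genuinely suffice to reach any target vector with the same generated ideal, and that no obstruction appears when $Q$ has a complicated Hasse diagram. The clean way to handle this is to process support elements in an order refining $Q$ (a linear extension), at each step using $\id_n+t\,e_{ij}$ with $i<_Q j$ to zero out or create the needed entries — the upper-triangularity guarantees this is a solvable triangular system, exactly as in the proof of Proposition \ref{Prop complementato}. The closure computation is routine once irreducibility of $Q_I(\C)$ is noted, the only subtlety being to confirm the limit point $[w]$ is approached within $Q_I(\C)$ rather than prematurely entering a smaller stratum, which the explicit perturbation $[w+\epsilon v_0]$ settles.
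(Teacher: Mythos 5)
Your proposal is correct and follows essentially the same route as the paper: both stratify $\mathbb{P}(V)$ by the order ideal generated by the support of a representative vector and identify $\overline{Q_I(\C)}$ with $\mathbb{P}(V_I)$, decomposed into the cells $Q_H(\C)$ for $H\in\mathcal{J}(I)\setminus\{\varnothing\}$, with you merely spelling out the transitivity argument and the degeneration $[w+\epsilon v_0]\to[w]$ that the paper leaves implicit. One phrasing slip: $\mathrm{supp}(Av)$ is in general \emph{not} the whole ideal $I(v)$ (take $A=\id_n$ and $v=e_j$ with $j$ not minimal in $Q$); what your argument actually needs, and what your leading-coefficient and inverse remarks do deliver, is the weaker invariance $\max(I(v))\subseteq\mathrm{supp}(Av)\subseteq I(v)$, i.e.\ that the generated ideal is preserved.
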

\begin{proof}
Let $v\in V$ be expressed as $v=a_1e_{i_1}+\ldots+a_ke_{i_k}$ for some $k\in [n]$, $a_1,\ldots,a_k\in \F\setminus \{0\}$.
Let $M:=\max_Q\{i_1,\ldots,i_k\}$ and $I:=M^\downarrow \in \JQ$. Then $v$ lies in $V_I \setminus \bigcup\limits_{i \in M}V_{I \setminus \{i\}}$.
Since the action of $I^*(Q;\F)$ on $V_I \setminus \bigcup\limits_{i \in M}V_{I \setminus \{i\}}$ is transitive and
the projection of this set on $\mathbb{P}(V)$ is $\mathbb{P}(V_I)\setminus \bigcup\limits_{i \in M}\mathbb{P}\left(V_{I \setminus \{i\}}\right)$,
the first assertion follows.
Finally we have that $\overline{Q_I(\C)}=\mathbb{P}(V_I)$; since $V_I$ is $I^*(Q;\mathbb{C})$-invariant, the last assertion can be deduced by repeating the previous arguments to the projective space $\mathbb{P}(V_I)$.
\end{proof} In analogy with the case $\F=\C$ in Theorem \ref{pspazio}, for any field $\F$, we say that $Q_I(\F)$ is a $Q$-\emph{Schubert cell} of $\mathbb{P}(V)$ and we define $\overline{Q_I(\F)}:=\bigcup\limits_{H \in \mathcal{J}(I)\setminus \{\varnothing\}} Q_H(\F)$,
saying that $\overline{Q_I(\F)}$ is a $Q$-\emph{Schubert variety} of $\mathbb{P}(V)$, which turns out to be a projective space.

The following are immediate consequences of Theorem \ref{pspazio}.

\begin{cor} \label{corollario dimensione pspazio} Let $Q\in \pos(n)$ and $I\in \JQ\setminus \{\varnothing\}$. Then $$\mathrm{dim}(\overline{Q_I(\mathbb{C}}))=|I|-1.$$
\end{cor}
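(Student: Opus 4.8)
The plan is to read off the dimension directly from the description of the Zariski closure given in Theorem \ref{pspazio}. First I would recall that the statement of Theorem \ref{pspazio} already identifies $\overline{Q_I(\C)}$ with $\mathbb{P}(V_I)$: indeed the proof of that theorem remarks that $\overline{Q_I(\C)} = \mathbb{P}(V_I)$, since $V_I$ is an $I^*(Q;\C)$-invariant subspace containing the orbit $Q_I(\C)$ as a dense open subset (it is the complement in $\mathbb{P}(V_I)$ of a finite union of proper linear subspaces $\mathbb{P}(V_{I\setminus\{i\}})$, hence Zariski dense). So the closure is a projective subspace of $\mathbb{P}(V)$ of the same dimension as $\mathbb{P}(V_I)$.

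Next I would compute $\dim \mathbb{P}(V_I)$. By definition $V_I = \spn_{\C}\{e_i : i \in I\}$, and since the $e_i$ are part of the canonical basis, $\dim_{\C} V_I = |I|$. Therefore $\dim \mathbb{P}(V_I) = |I| - 1$ (using $I \neq \varnothing$, so $V_I \neq 0$ and the projective space is nonempty), which gives the claimed equality $\dim(\overline{Q_I(\C)}) = |I| - 1$.

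This corollary is essentially a bookkeeping consequence of Theorem \ref{pspazio}, so there is no real obstacle; the only point requiring a word of care is to make sure we are taking the dimension of the \emph{closure} rather than of the orbit itself, and that the closure is genuinely all of $\mathbb{P}(V_I)$ and not a smaller set — but this is exactly the content already established in the proof of Theorem \ref{pspazio}, where the union $\bigcup_{i\in\max(I)}\mathbb{P}(V_{I\setminus\{i\}})$ is a proper (finite) union of hyperplanes-or-smaller inside $\mathbb{P}(V_I)$, hence its complement $Q_I(\C)$ is dense. One could alternatively argue via the decomposition $\overline{Q_I(\C)} = \biguplus_{H\in\mathcal{J}(I)\setminus\{\varnothing\}} Q_H(\C)$ and note that the top-dimensional piece corresponds to $H = I$, but the direct identification with $\mathbb{P}(V_I)$ is cleaner.
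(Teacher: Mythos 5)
Your proof is correct and follows the same route the paper intends: the corollary is stated as an immediate consequence of Theorem \ref{pspazio}, whose proof already identifies $\overline{Q_I(\C)}$ with $\mathbb{P}(V_I)$, and $\dim\mathbb{P}(V_I)=|I|-1$ since $V_I=\spn_{\C}\{e_i:i\in I\}$. Your added remark on why $Q_I(\C)$ is dense in $\mathbb{P}(V_I)$ (complement of a finite union of proper linear subspaces) just makes explicit what the paper leaves implicit.
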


\begin{cor}
  The poset of $Q$-Schubert varieties of $\mathbb{P}(V)$, ordered by inclusion, is isomorphic to $\JQ\setminus \{\varnothing\}$.
  Moreover, if $I\cap J \neq \varnothing$ then
$$\overline{Q_I(\F)} \cap \overline{Q_J(\F)}=\overline{Q_{I \cap J}(\F)}.$$
\end{cor}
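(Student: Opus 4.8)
The plan is to derive both statements directly from Theorem~\ref{pspazio} and Corollary~\ref{corollario dimensione pspazio}, since each $Q$-Schubert variety is of the form $\mathbb{P}(V_I)$ with $I\in\JQ\setminus\{\varnothing\}$. First I would set up the correspondence $I\mapsto \overline{Q_I(\F)}=\mathbb{P}(V_I)$ between $\JQ\setminus\{\varnothing\}$ and the set of $Q$-Schubert varieties. It is evidently surjective by definition of $Q$-Schubert variety. For injectivity, note that $V_I$ as a subspace of $V$ determines $I=\{i\in[n]: e_i\in V_I\}$ uniquely, so distinct order ideals give distinct subspaces $V_I$, hence distinct projective subspaces $\mathbb{P}(V_I)$. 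Then I would check that this bijection is an order isomorphism: if $I\subseteq J$ then obviously $V_I\subseteq V_J$ so $\mathbb{P}(V_I)\subseteq\mathbb{P}(V_J)$; conversely if $\mathbb{P}(V_I)\subseteq\mathbb{P}(V_J)$ then $V_I\subseteq V_J$ (a projective subspace containment lifts to the affine cones), which forces $I\subseteq J$ by reading off the basis vectors. This establishes the first claim. The fact that each $\overline{Q_I(\F)}$ is a projective space was already recorded after Theorem~\ref{pspazio}, namely $\overline{Q_I(\C)}=\mathbb{P}(V_I)$ and more generally $\overline{Q_I(\F)}=\mathbb{P}(V_I)$.

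For the intersection formula, assume $I\cap J\neq\varnothing$. The key observation is that $V_I\cap V_J=V_{I\cap J}$: indeed a vector lies in $V_I\cap V_J$ if and only if it is supported on both $I$ and $J$, i.e.\ supported on $I\cap J$. Since $I\cap J$ is again an order ideal (intersection of order ideals is an order ideal) and is non-empty by hypothesis, $V_{I\cap J}$ is one of the spaces appearing in the construction, and $\mathbb{P}(V_I)\cap\mathbb{P}(V_J)=\mathbb{P}(V_I\cap V_J)=\mathbb{P}(V_{I\cap J})=\overline{Q_{I\cap J}(\F)}$. Here I use that for linear subspaces $U,W$ one has $\mathbb{P}(U)\cap\mathbb{P}(W)=\mathbb{P}(U\cap W)$, which holds because a point $[v]$ lies in both $\mathbb{P}(U)$ and $\mathbb{P}(W)$ exactly when the line $\F v$ lies in $U\cap W$.

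I do not anticipate a serious obstacle here; the statement is essentially a dictionary translation of Theorem~\ref{pspazio}. The only point requiring a moment of care is the injectivity/order-reflection of $I\mapsto\mathbb{P}(V_I)$, where one must pass from projective subspaces back to their affine cones before reading off supports; and the observation that $\JQ$ is closed under intersection, which is standard for lattices of order ideals. Everything else is the elementary behaviour of projectivization under intersection of coordinate subspaces.
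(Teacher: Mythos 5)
Your proposal is correct and follows essentially the route the paper intends: the corollary is stated as an immediate consequence of Theorem \ref{pspazio}, and your dictionary $I\mapsto\overline{Q_I(\F)}=\mathbb{P}(V_I)$ together with $V_I\cap V_J=V_{I\cap J}$ and $\mathbb{P}(U)\cap\mathbb{P}(W)=\mathbb{P}(U\cap W)$ is exactly the translation being invoked (the identification $\overline{Q_I(\F)}=\mathbb{P}(V_I)$ being recorded right after the theorem). No gaps worth flagging.
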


In the case of a finite field $\mathbb{F}_q$, we provide a formula for the number of points of a $Q$-Schubert cell $Q_I(\mathbb{F}_q)$.

\begin{cor} Let $\F_q$ be a finite field. Then
  $$|Q_I(\F_q)|=\sum\limits_{\substack{H \in \mathcal{J}(I)\setminus \{\varnothing\}\\I\setminus H \subseteq \max(I)}}(-1)^{|I\setminus H|} [|H|]_q.$$
\end{cor}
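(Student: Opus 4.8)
The plan is to count the points of the $Q$-Schubert cell $Q_I(\F_q)$ by exploiting the decomposition of its Zariski closure provided by Theorem~\ref{pspazio}. Since $\overline{Q_I(\F_q)}=\bigcup_{H\in\mathcal{J}(I)\setminus\{\varnothing\}}Q_H(\F_q)$ is a disjoint union, we get the basic recursion
\[
|\overline{Q_I(\F_q)}|=\sum_{H\in\mathcal{J}(I)\setminus\{\varnothing\}}|Q_H(\F_q)|,
\]
together with $|\overline{Q_I(\F_q)}|=|\mathbb{P}(V_I)|=[|I|]_q$, since $\overline{Q_I(\F_q)}$ is the projective space $\mathbb{P}(V_I)$ of dimension $|I|-1$ by Corollary~\ref{corollario dimensione pspazio}. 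So the cell sizes $|Q_H(\F_q)|$ are obtained from the projective-space sizes $[|H|]_q$ by Möbius inversion over the poset $\JQ\setminus\{\varnothing\}$, which is isomorphic to the lattice of order ideals contained in $I$.

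First I would set up Möbius inversion on the distributive lattice $\mathcal{J}(I)$ (adjoining $\varnothing$ as the bottom element), where the Möbius function is the classical one on a distributive lattice: $\mu(H,K)=(-1)^{|K\setminus H|}$ if $K\setminus H$ is an antichain of $I$, and $0$ otherwise. Applying inversion to the relation $[|I|]_q=\sum_{H\leqslant I}|Q_H(\F_q)|$ (with the convention $|Q_\varnothing|=0$, matching $[0]_q=0$) yields
\[
|Q_I(\F_q)|=\sum_{H\in\mathcal{J}(I)}\mu(H,I)\,[|H|]_q
=\sum_{\substack{H\in\mathcal{J}(I)\\ I\setminus H\ \text{an antichain}}}(-1)^{|I\setminus H|}[|H|]_q.
\]
The final step is to observe that, since $H$ is an order ideal of $I$ and $H\subseteq I$, the complement $I\setminus H$ is an antichain of $I$ precisely when $I\setminus H\subseteq\max(I)$: indeed if $i\in I\setminus H$ is not maximal in $I$, there is $j\in I$ with $i<_Q j$, and $j\notin H$ (as $H$ is an ideal and $i\notin H$), so $\{i,j\}\subseteq I\setminus H$ is a comparable pair; conversely a subset of $\max(I)$ is automatically an antichain, and $I$ minus such a subset is still an order ideal. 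Restricting the sum to $H$ with $I\setminus H\subseteq\max(I)$ and excluding $H=\varnothing$ (which only occurs when $I\subseteq\max(I)$, i.e. $I$ is an antichain, and then contributes $[0]_q=0$ anyway) gives exactly the claimed formula.

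I expect the only real subtlety to be the bookkeeping around the empty ideal: one must check that the term $H=\varnothing$ in the inverted sum is harmless, which is automatic because $[0]_q=0$, so it can be dropped to match the statement $H\in\mathcal{J}(I)\setminus\{\varnothing\}$. Alternatively, and perhaps more transparently, one can avoid Möbius inversion entirely and argue by inclusion–exclusion directly: $Q_I(\F_q)=\mathbb{P}(V_I)\setminus\bigcup_{i\in\max(I)}\mathbb{P}(V_{I\setminus\{i\}})$, so
\[
|Q_I(\F_q)|=\sum_{S\subseteq\max(I)}(-1)^{|S|}\Bigl|\mathbb{P}\bigl(V_{I\setminus S}\bigr)\Bigr|
=\sum_{S\subseteq\max(I)}(-1)^{|S|}[|I\setminus S|]_q,
\]
using that the intersection of the $\mathbb{P}(V_{I\setminus\{i\}})$ over $i\in S$ is $\mathbb{P}(V_{I\setminus S})$; reindexing by $H:=I\setminus S$ recovers the stated sum over $H\in\mathcal{J}(I)$ with $I\setminus H\subseteq\max(I)$, and again the $H=\varnothing$ summand vanishes. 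Either route is routine; the content is entirely in Theorem~\ref{pspazio} and the structure of $\mathcal{J}(I)$.
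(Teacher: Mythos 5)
Your argument is correct and is essentially the paper's own proof: both apply M\"obius inversion on the distributive lattice $\mathcal{J}(I)$ to the disjoint-union decomposition $\mathbb{P}(V_I)=\biguplus_{H\in\mathcal{J}(I)\setminus\{\varnothing\}}Q_H(\F_q)$ from Theorem~\ref{pspazio}, using the standard M\"obius function of a distributive lattice (your antichain condition on $I\setminus H$ is exactly the condition $I\setminus H\subseteq\max(I)$ in the statement). Your care with the $H=\varnothing$ term and the alternative direct inclusion--exclusion are fine but add nothing beyond the paper's route.
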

\begin{proof}
  By Theorem \ref{pspazio} we know that $\mathbb{P}(V_I)=\biguplus \limits_{H \in \mathcal{J}(I)\setminus \{\varnothing\}}Q_H(\mathbb{F}_q)$.
  It is known (see \cite[Example~3.9.6]{StaEC1}) that the  M\"obius function of a distributive lattice is
  $$\mu(H,I)=\left\{
               \begin{array}{ll}
                 (-1)^{|I\setminus H|}, & \hbox{if $I\setminus H\subseteq \max(I) $;} \\
                 0, & \hbox{otherwise.}
               \end{array}
             \right.$$
  Since $|\mathbb{P}(\mathbb{F}^{n}_q)|=[n]_q$,
  we obtain our formula by  M\"obius inversion.
  \end{proof}

With the following definition we introduce a general procedure to decompose subsets of projective spaces. In the subsequent sections we
apply this approach to Grassmannians and $P$-flag spaces.

\begin{dfn} \label{def incidence strat}
 Let $X \subseteq \mathbb{P}(\F^n)$.
 Given a poset $Q\in \pos(n)$, we call \emph{incidence stratification} of $X$ the set
 $$\left\{\overline{Q_I(\F)}\cap X : I \in \mathcal{J}(Q) \right\} \setminus \left\{\varnothing \right\}.$$
\end{dfn}

\subsection{$Q$-stratification of a Grassmannian}

Let $n>0$, $k\in [n]$ and $Q\in \pos(n)$. We need to define a suitable poset $Q^k_<$ in order to realize an incidence stratification
of the Grassmannian $\gr_{\F}(k,n)$, generalizing Schubert varieties and matroidal strata.

Consider the Cartesian $k$-th power $Q^k$ of the poset $Q$.
Recall that the order on $Q^k$ is defined by $$i \leqslant_{Q^k} j \Leftrightarrow i_h \leqslant_Q j_h,  \text{  for every  } h\in [k],$$
for all $i,j \in [n]^k$, where $i_h$ is the projection of $i$ on the $h$-th component.

The poset $Q^k$ admits an action of the symmetric group $S_k$, as showed in the next proposition, whose proof is straightforward.

\begin{prop} \label{prop az automorf}
  Let $\sigma \in S_k$. Then the action on $[n]^k$ defined by $$\sigma i:= (i_{\sigma^{-1}(1)},\ldots,i_{\sigma^{-1}(k)}),$$ for all $i\in [n]^k$, is an automorphism of the poset $Q^k$. This defines a group morphism $S_k \rightarrow \aut(Q^k)$.
\end{prop}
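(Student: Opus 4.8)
The plan is to verify the two assertions directly: that the map $\sigma \mapsto (\text{permute coordinates})$ is an order automorphism of $Q^k$ for each fixed $\sigma \in S_k$, and that $\sigma \mapsto \sigma$ (this assignment) respects composition. For the first part, write $\phi_\sigma : Q^k \to Q^k$, $\phi_\sigma(i_1,\dots,i_k) := (i_{\sigma(1)},\dots,i_{\sigma(k)})$. It is a bijection with inverse $\phi_{\sigma^{-1}}$, since applying $\phi_{\sigma^{-1}}$ to $\phi_\sigma(i_1,\dots,i_k)$ permutes the entry in position $h$ to position $\sigma^{-1}(h)$ and back. To see that $\phi_\sigma$ and $\phi_{\sigma^{-1}}$ are both order preserving (hence $\phi_\sigma$ is a poset isomorphism), recall from the displayed definition of the order on $Q^k$ that $(i_1,\dots,i_k)\leqslant_Q(j_1,\dots,j_k)$ means $i_h\leqslant_Q j_h$ for every $h\in[k]$. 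If this holds, then in particular $i_{\sigma(h)}\leqslant_Q j_{\sigma(h)}$ for every $h\in[k]$ (as $\sigma(h)$ ranges over $[k]$), which is exactly $\phi_\sigma(i_1,\dots,i_k)\leqslant_Q\phi_\sigma(j_1,\dots,j_k)$. The same argument applied to $\sigma^{-1}$ gives the converse implication, so $\phi_\sigma$ is an automorphism of $Q^k$.

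For the second part, I would check that $\phi_\sigma \circ \phi_\tau = \phi_{\sigma\tau}$ for $\sigma,\tau\in S_k$ (with whatever composition convention the paper uses for $S_n=\aut([n])$, which here acts on positions), so that $\sigma\mapsto\phi_\sigma$ is a group homomorphism $S_k\to\aut(Q^k)$. Starting from $(i_1,\dots,i_k)$, apply $\phi_\tau$ to get the tuple whose $h$-th entry is $i_{\tau(h)}$; then apply $\phi_\sigma$, which reads off positions $\sigma(1),\dots,\sigma(k)$ of this new tuple, giving $h$-th entry $i_{\tau(\sigma(h))}=i_{(\tau\sigma)(h)}$. Depending on the composition order one reconciles this with $\phi_{\sigma\tau}$ or $\phi_{\tau\sigma}$; in either case one obtains an (anti)homomorphism, and since the paper only claims a \emph{group morphism} $S_k\to\aut(Q^k)$, composing with the inversion automorphism of $S_k$ if necessary settles it. It is harmless to note $\phi_e=\id_{Q^k}$.

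There is essentially no obstacle here — the statement is, as the paper itself remarks, straightforward; the only point requiring mild care is bookkeeping the composition convention and the direction of the indexing, so that one correctly concludes a group morphism (rather than an antimorphism) in the chosen convention. I would present the proof as two short paragraphs: one establishing that each $\phi_\sigma$ is a poset automorphism via the coordinatewise characterisation of $\leqslant_Q$ on $Q^k$, and one checking functoriality in $\sigma$.

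\begin{proof}
For $\sigma \in S_k$ write $\phi_\sigma\colon Q^k\to Q^k$ for the map $\phi_\sigma(i_1,\dots,i_k):=(i_{\sigma(1)},\dots,i_{\sigma(k)})$. Then $\phi_\sigma$ is a bijection with inverse $\phi_{\sigma^{-1}}$. Moreover, if $(i_1,\dots,i_k)\leqslant_Q(j_1,\dots,j_k)$, then by definition $i_h\leqslant_Q j_h$ for every $h\in[k]$; in particular $i_{\sigma(h)}\leqslant_Q j_{\sigma(h)}$ for every $h\in[k]$, that is, $\phi_\sigma(i_1,\dots,i_k)\leqslant_Q\phi_\sigma(j_1,\dots,j_k)$. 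Hence $\phi_\sigma$ is order preserving, and applying the same reasoning to $\sigma^{-1}$ shows that $\phi_\sigma^{-1}=\phi_{\sigma^{-1}}$ is order preserving as well. Therefore $\phi_\sigma$ is an automorphism of the poset $Q^k$.

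Finally, for $\sigma,\tau\in S_k$ and $(i_1,\dots,i_k)\in Q^k$, the $h$-th entry of $\phi_\sigma(\phi_\tau(i_1,\dots,i_k))$ equals the $\sigma(h)$-th entry of $\phi_\tau(i_1,\dots,i_k)$, namely $i_{\tau(\sigma(h))}$; thus $\phi_\sigma\circ\phi_\tau=\phi_{\tau\sigma}$, and since $\phi_e=\id_{Q^k}$ the assignment $\sigma\mapsto\phi_\sigma$ is a group morphism $S_k\to\aut(Q^k)$ (after composing with the order-reversing automorphism $\sigma\mapsto\sigma^{-1}$ of $S_k$ if one prefers the opposite composition convention).
\end{proof}
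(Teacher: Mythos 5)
Your proof is correct; the paper itself omits the argument (it declares the proof ``straightforward''), and your direct verification — coordinatewise order-preservation for $\phi_\sigma$ and $\phi_{\sigma^{-1}}$, plus the composition check — is exactly the intended routine argument. You are also right to flag that with standard composition $\phi_\sigma\circ\phi_\tau=\phi_{\tau\sigma}$, so the literal assignment $\sigma\mapsto\phi_\sigma$ is an antihomomorphism and one should either use $\sigma\mapsto\phi_{\sigma^{-1}}$ or the opposite composition convention; the only nitpick is terminological: $\sigma\mapsto\sigma^{-1}$ is an anti-automorphism of $S_k$ (not an ``order-reversing automorphism''), and precomposing your antihomomorphism with it is what yields the claimed group morphism $S_k\to\aut(Q^k)$.
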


The following poset is fundamental for our constructions.

\begin{dfn} \label{ordine k}
  The poset $Q^k_< :=\left([n]^k_<, \preccurlyeq_{Q^k}\right)$ is defined by letting
  $$i \preccurlyeq_{Q^k} j \Leftrightarrow \sigma i\leqslant_{Q^k} j,$$ for some $\sigma \in S_k$, for all $i,j \in [n]^k_<$.
\end{dfn} Notice that $Q^1_<=Q$. For $k>1$ it could be not obvious that $Q^k_<$ is a poset. This follows from Proposition \ref{prop az automorf}, as we are going to show.
Let $i,j,h \in [n]^k_<$.
\begin{enumerate}
  \item reflexivity: straightforward, by taking $\sigma=e$.
  \item antisymmetry: let $\sigma i\leqslant_{Q^k} j$ and $\tau j\leqslant_{Q^k} i$, for some $\sigma,\tau \in S_k$. Then $\tau\sigma i \leqslant_{Q^k} i$. From the fact that $i_1<\ldots<i_k$, we obtain $\tau\sigma=e$. Hence $i \leqslant_{Q^k} \tau j \leqslant_{Q^k} i$, which implies $\tau=\sigma=e$ and $i =j$.
  \item transitivity: let $h \preccurlyeq_{Q^k} i$ and $i \preccurlyeq_{Q^k} j$; then there exist $\sigma, \tau \in S_k$ such that $\sigma h \leqslant_{Q^k} i \leqslant_{Q^k}  \tau j$. This implies
  $\tau^{-1}\sigma h \leqslant_{Q^k} j$.
\end{enumerate} It is clear that $i \leqslant_{Q^k} j$ implies $i \preccurlyeq_{Q^k} j$, i.e.\ the poset $Q^k_<$ is a refinement of $([n]_<^k,\leqslant_{Q^k})$, the induced subposet of $Q^k$. If $Q=c_n$, they are actually the same poset, as stated in the following proposition.

\begin{prop}\label{refinem}
  Let $n \geqslant 1$ and $k \in [n]$. Then  $(c_n)^k_<=([n]_<^k,\leqslant_{c_n^k})$.
\end{prop}
\begin{proof}

Let $i,j \in [n]^k_<$ with  $i \nleqslant_{c_n^k} j$. Then there exists a minimal $h\in [k]$ such that
 $j_h<i_h$. If $h=k$ then it is immediate to check that $\sigma i \nleqslant_{c_n^k} j$, for all $\sigma \in S_k$. Let $h<k$ and $\sigma\in S_k$. There are three cases to be considered.
 \begin{enumerate}
   \item $\sigma^{-1}(h)=h$: we have that $j_h<i_h=i_{\sigma^{-1}(h)}$ and this implies $\sigma i \nleqslant_{c_n^k} j$.
   \item $\sigma^{-1}(h)>h$: in this case $i_{\sigma^{-1}(h)}>i_h>j_h$, so $\sigma i \nleqslant_{c_n^k} j$.
   \item $\sigma^{-1}(h)<h$: in this case $h>1$. There exists $t\in [h-1]$ such that $\sigma^{-1}(t)\geqslant h$; then $i_{\sigma^{-1}(t)} \geqslant i_h > j_h>j_t$ and $\sigma i \nleqslant_{c_n^k} j$.
 \end{enumerate} Then  $i \preccurlyeq_{c_n^k} j$ implies  $i \leqslant_{c_n^k} j$. \end{proof}

We can consider $Q^k_<$ as an element of $\pos\left(\binom{n}{k}\right)$; in fact the lexicographic order on $[n]^k_<$ provides a natural labeling of $Q^k_<$, as showed in the next proposition.
\begin{prop} \label{proposizione lex}
 Let $Q\in \pos(n)$ and $k\geqslant 1$. Then $a \preccurlyeq_{Q^k} b \Rightarrow a \leqslant_{\mathrm{lex}} b$,
 for all $a,b\in [n]^k_<$.
\end{prop}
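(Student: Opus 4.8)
The plan is to unwind the definition of $\preccurlyeq_Q$ and compare it with the lexicographic order on $[n]^k_<$. Suppose $a=(a_1,\dots,a_k)\preccurlyeq_Q b=(b_1,\dots,b_k)$ with $a\neq b$. By Definition \ref{ordine k} there is a permutation $\sigma\in S_k$ such that $\sigma(a)=(a_{\sigma(1)},\dots,a_{\sigma(k)})\leqslant_Q b$ componentwise, i.e.\ $a_{\sigma(h)}\leqslant_Q b_h$ for every $h\in[k]$. Since $\leqslant_Q$ refines the natural order of $[n]$ only in the sense that $i\leqslant_Q j\Rightarrow i\leqslant j$ when $P\in\pos(n)$ — more precisely $i\leqslant_Q j$ implies $i\le j$ as integers, because $\id$ is a linear extension of $Q$ — we get $a_{\sigma(h)}\le b_h$ as integers for every $h$.

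The key step is then purely combinatorial: from $a_{\sigma(h)}\le b_h$ for all $h$, with both $(a_h)$ and $(b_h)$ strictly increasing sequences, I want to conclude $a\leqslant_{\mathrm{lex}} b$. Let $h_0$ be the first index where $a_{h_0}\neq b_{h_0}$; for $h<h_0$ we have $a_h=b_h$. It suffices to show $a_{h_0}<b_{h_0}$. Consider the index $m=\sigma^{-1}(h_0)$, or rather argue as follows: the set $\{\sigma(1),\dots,\sigma(h_0)\}$ has $h_0$ elements, so it contains some index $t\ge h_0$; then $b_{h_0}\le b_t$ (strictly increasing), and there is $s\le h_0$ with $\sigma(s)=t$, giving $a_t=a_{\sigma(s)}\le b_s$. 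If $s<h_0$ then $b_s=a_s<a_t$ (since $s<t$ and $a$ increasing), so $a_t\le b_s=a_s<a_t$, a contradiction; hence $s=h_0$, so $a_t=a_{\sigma(h_0)}\le b_{h_0}$. Now among $\{\sigma(1),\dots,\sigma(h_0)\}$ pick instead an element; a cleaner route: since $\{\sigma(1),\dots,\sigma(h_0)\}$ has $h_0$ elements, its minimum is $\le h_0$, hence equals some $\sigma(s_1)\le h_0$ with $s_1\le h_0$; combined with the agreement $a_h=b_h$ for $h<h_0$ and strict monotonicity one pins down $a_{h_0}\le b_{h_0}$, and equality is excluded by the choice of $h_0$. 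So $a_{h_0}<b_{h_0}$, which is exactly $a<_{\mathrm{lex}} b$.

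The main obstacle I anticipate is the bookkeeping in that last combinatorial step: one must carefully exploit both that $a$ and $b$ are \emph{strictly} increasing and that $a$ and $b$ \emph{agree} on the first $h_0-1$ coordinates, to force the permuted inequalities $a_{\sigma(h)}\le b_h$ to collapse into a lexicographic comparison at position $h_0$. A convenient way to organize this is to induct on $h_0$, or to observe that for indices $h<h_0$ the equalities $a_h=b_h$ together with $a_{\sigma(h)}\le b_h$ and injectivity of $\sigma$ show $\sigma$ must fix the prefix $\{1,\dots,h_0-1\}$ setwise (any $\sigma(h)$ with $h<h_0$ landing at or above $h_0$ would force $a_{\sigma(h)}\ge a_{h_0}$ while $b_h=a_h<a_{h_0}$, impossible); once $\sigma$ preserves $\{1,\dots,h_0-1\}$ it also preserves $\{h_0,\dots,k\}$, and restricting the inequality $a_{\sigma(h_0)}\le b_{h_0}$ with $\sigma(h_0)\ge h_0$ gives $a_{h_0}\le a_{\sigma(h_0)}\le b_{h_0}$, hence $a_{h_0}<b_{h_0}$ by the definition of $h_0$. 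This yields $a\leqslant_{\mathrm{lex}} b$ and completes the proof. Note this also re-proves antisymmetry of $\preccurlyeq_Q$ as a byproduct, consistent with the discussion preceding the statement.
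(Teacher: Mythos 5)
Your proof is correct, but it takes a more self-contained route than the paper's. The paper argues by reduction: since every $Q\in\pos(n)$ satisfies $Q\leqslant c_n$, the relation $\preccurlyeq_{Q}$ is contained in $\preccurlyeq_{c_n}$; Proposition \ref{refinem} then identifies $(c_n)^k_<$ with the componentwise order $\leqslant_{c_n}$ on $[n]^k_<$, which is visibly refined by the lexicographic order, and composing these embeddings gives the claim. You instead pass directly from $a_{\sigma(h)}\leqslant_Q b_h$ to the integer inequalities $a_{\sigma(h)}\leqslant b_h$ (using that $\id$ is a linear extension of $Q$ --- note the slip where you write ``when $P\in\pos(n)$'' for $Q$), and then prove the needed combinatorial lemma from scratch: at the first index $h_0$ where $a$ and $b$ differ, injectivity of $\sigma$ together with $a_h=b_h$ for $h<h_0$ forces $\sigma$ to stabilize $\{1,\dots,h_0-1\}$, hence $\sigma(h_0)\geqslant h_0$ and $a_{h_0}\leqslant a_{\sigma(h_0)}\leqslant b_{h_0}$, so $a<_{\mathrm{lex}}b$; this argument is sound. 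In effect you re-derive inline (a weak form of) the chain case of Proposition \ref{refinem}; what this buys is independence from that proposition --- whose statement carries hypotheses (a maximum or minimum, $k>1$) that the paper's citation quietly bypasses --- while the paper's version is shorter given that refinem is already proved, and it yields the stronger componentwise comparison rather than only the lexicographic one. One presentational remark: your second paragraph contains an abandoned first attempt (``or rather argue as follows'', ``a cleaner route'') that is never brought to completion; the complete argument is the one in your final paragraph, and you should delete the false start rather than keep both.
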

\begin{proof}

  We claim that $Q \leqslant P$ implies $Q_<^k \hookrightarrow P^k_<$, for all $Q,P \in \pos(n)$. In fact,  $\sigma a \leqslant_{Q^k} b$ implies
  $\sigma a \leqslant_{P^k} b$, for all $a,b \in [n]^k_<$, $\sigma \in S_k$.
  Since $Q \leqslant c_n$, we obtain $Q_<^k \hookrightarrow (c_n)^k_<$.
   By Proposition \ref{refinem}, $(c_n)^k_<=([n]^k_<,\preccurlyeq_{c^k_n})=([n]^k_<,\leqslant_{c^k_n})$. Moreover we have that $([n]^k_<,\leqslant_{c^k_n}) \hookrightarrow ([n]^k_<,\leqslant_{\mathrm{lex}})$ is a linear extension of $(c_n)^k_<$. Then $Q^k_< \hookrightarrow (c_n)^k_< \hookrightarrow \left([n]^k_<,\leqslant_{\mathrm{lex}}\right)\simeq c_{\binom{n}{k}}$ gives a linear extension of $Q^k_<$.
\end{proof}

The duality proved in the following proposition is a poset theoretic version of the Grassmannian duality $\gr_{\F}(k,n) \simeq \gr_{\F}(n-k,n)$.

\begin{prop} \label{prop duali}
 Let $Q\in \pos(n)$. Then the following poset isomorphism holds for all $k\in [n-1]$:
 $$Q^k_< \simeq \left(Q^{n-k}_<\right)^*.$$
\end{prop}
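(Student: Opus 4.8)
The plan is to construct an explicit order-reversing bijection $[n]^k_< \to [n]^{n-k}_<$ realizing complementation, and then check it intertwines the relations $\preccurlyeq_Q$ and $\preccurlyeq_{Q^*}$ with directions swapped. Concretely, for $a=(a_1<\dots<a_k)\in[n]^k_<$, let $a^c\in[n]^{n-k}_<$ be the strictly increasing tuple listing the complement $[n]\setminus\{a_1,\dots,a_k\}$. This is obviously a bijection. It remains to show that $a\preccurlyeq_Q b$ holds if and only if $b^c \preccurlyeq_{Q^*} a^c$, which gives $Q^k_< \simeq (Q^{n-k}_<)^*$ via $a\mapsto a^c$.

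First I would reduce the problem to a statement about the componentwise orders $\leqslant_Q$ on $Q^k$ and $Q^*$ on $(Q^*)^{n-k}$, unwinding Definition \ref{ordine k}: $a\preccurlyeq_Q b$ means $\sigma\cdot a \leqslant_Q b$ componentwise for some $\sigma\in S_k$, i.e.\ there is a system of distinct representatives pairing up the entries of $a$ with the entries of $b$ so that each chosen entry of $a$ is $\leqslant_Q$ the corresponding entry of $b$. By Hall's theorem (or directly, a standard exchange argument on the poset $Q$), this is equivalent to a "dominance" condition: for every order ideal $J$ of $Q$, $|\{i : a_i \in J\}| \geqslant |\{i : b_i \in J\}|$. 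This ideal-wise reformulation is the technical heart; I would state it as a lemma. The point is that "can be matched increasingly in $Q$" is exactly an ideal-containment/counting condition, and such conditions dualize cleanly.

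Next I would dualize: an order ideal $J$ of $Q$ corresponds, under the involution $i\mapsto n+1-i$ of Definition \ref{dual}, to a complement of an order ideal of $Q^*$ — equivalently, the order filters of $Q$ are exactly the (relabelled) order ideals of $Q^*$. Using $|a \cap J| + |a^c \cap J| = |J|$ for the underlying sets, the counting inequality "$|a\cap J|\geqslant |b\cap J|$ for all ideals $J$ of $Q$" transforms into "$|a^c \cap K|\geqslant |b^c\cap K|$ for all ideals $K$ of $Q^*$" with the roles of $a$ and $b$ exchanged (passing from ideals to filters flips the inequality, and passing to complements flips it back while swapping $a^c,b^c$). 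Applying the lemma in the other direction then yields $b^c \preccurlyeq_{Q^*} a^c$, and symmetry of the argument gives the converse implication.

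The main obstacle I expect is proving the ideal-wise reformulation of $\preccurlyeq_Q$ cleanly — i.e.\ that the existence of $\sigma\in S_k$ with $\sigma\cdot a\leqslant_Q b$ is equivalent to the family of counting inequalities over all order ideals of $Q$. One direction is immediate (an ideal is downward closed, so a valid matching can only move mass downward, hence into any ideal). The other direction is a bipartite-matching/Hall argument: build the bipartite graph with left vertices the entries of $a$, right vertices the entries of $b$, and an edge when $a_i\leqslant_Q b_j$; one must verify Hall's condition from the ideal inequalities, which uses that the up-set generated by any set of $b$-entries has an ideal complement. Everything else — the complement bijection, the dualization bookkeeping with $n+1-i$, and the final symmetry — is routine once this lemma is in place; if one prefers to avoid Hall entirely, for $Q$ a general finite poset one can instead induct on $n$ by peeling off a maximal element, but the Hall formulation keeps the duality transparent.
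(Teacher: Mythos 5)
Your complementation map and your Hall-type lemma (that $a\preccurlyeq_Q b$ if and only if $|a\cap J|\geqslant |b\cap J|$ for every order ideal $J$ of $Q$) are both correct, but the dualization step that follows is wrong, and that is exactly where the proof breaks. The claimed equivalence ``$a\preccurlyeq_Q b$ iff $b^c\preccurlyeq_{Q^*}a^c$'' is false: take $n=3$, $Q$ with the single relation $1<_Q 2$ (so $Q^*$ has the single relation $2<_{Q^*}3$), $k=1$, $a=(1)$, $b=(2)$. Then $a\preccurlyeq_Q b$, but $b^c=(1,3)\not\preccurlyeq_{Q^*}(2,3)=a^c$: the identity matching would need $1\leqslant_{Q^*}2$ and the transposition would need $3\leqslant_{Q^*}2$, and neither holds. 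The slip is in the bookkeeping: filters of $Q$ become ideals of $Q^*$ only after relabelling the ground set by $i\mapsto n+1-i$, while your sets $a^c,b^c$ are not relabelled, so quantifying over ideals of $Q^*$ is not the same as quantifying over filters of $Q$ when applied to $a^c,b^c$. Moreover, even if such an equivalence held it would prove the wrong statement: an order-reversing bijection $Q^k_<\to (Q^*)^{n-k}_<$ would yield $Q^k_<\simeq Q^{n-k}_<$ (because $(Q^*)^m_<$ is anti-isomorphic to $Q^m_<$ via $i\mapsto n+1-i$), and this is false in general --- for $Q$ on $[3]$ with $1<_Q2$ and $1<_Q3$, the poset $Q^1_<$ has a minimum and two maximal elements while $Q^2_<$ has a maximum and two minimal elements. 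What the proposition requires is an order-reversing bijection onto $Q^{n-k}_<$ itself, since $(Q^{n-k}_<)^*$ is just a relabelled opposite.

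The good news is that your own lemma closes the argument immediately, with no filters and no $Q^*$: since $|x^c\cap J|=|J|-|x\cap J|$ for every $J\subseteq[n]$, the condition ``$|a\cap J|\geqslant|b\cap J|$ for all ideals $J$ of $Q$'' is literally the same as ``$|b^c\cap J|\geqslant|a^c\cap J|$ for all ideals $J$ of $Q$'', i.e.\ $a\preccurlyeq_Q b$ if and only if $b^c\preccurlyeq_Q a^c$, so complementation is an order-reversing bijection $Q^k_<\to Q^{n-k}_<$, which is the proposition. For comparison, the paper reaches the same conclusion without Hall's theorem, by an exchange (transposition) argument showing that $a\preccurlyeq_Q b$ implies $a\setminus b\preccurlyeq_Q b\setminus a$, and then observing that $a^c\setminus b^c=b\setminus a$ and $b^c\setminus a^c=a\setminus b$; your Hall/ideal-counting route is a valid and somewhat more conceptual alternative once the $Q^*$ detour is removed.
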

\begin{proof}
 Let $a,b\in [n]^k_<$. Recall that we consider $\bigcup\limits_{k=0}^n[n]^k_<$ as the Boolean algebra $\mathcal{P}([n])$. We let $g^c:=[n]\setminus g \in [n]^{n-k}_<$, for all $g\in [n]^k_<$.
 We claim that $a \preccurlyeq_{Q^k} b$ if and only if $a \setminus b \preccurlyeq_{Q^h} b \setminus a$, where $h:=k-m$ and $m:=|a\cap b|$.
 If $a\cap b=\varnothing$ there is nothing to prove. Assume $a\cap b\neq\varnothing$.

 \begin{enumerate}
   \item $a \preccurlyeq_{Q^k} b \Rightarrow a \setminus b \preccurlyeq_{Q^h} b \setminus a$: by hypothesis there exists $\omega \in S_k$
   such that $a \leqslant_{Q^k} \omega  b$. Let $a_i:=z \in a \cap b$,
   $a_j:=x \leqslant_Q z =: (\omega b)_j$ and $z \leqslant_Q y =: (\omega b)_i$. Then $x \leqslant_Q y$ and
   $a \leqslant_{Q^k} (\tau\omega) b$, where, if $i\neq j$, $\tau \in S_k$ is the transposition such that $(\tau\omega b)_j=y$ and $(\tau\omega b)_i=z$,
   otherwise $\tau$ is the identity.
   We then conclude by repeated use of this argument.
   \item $a \setminus b \preccurlyeq_{Q^h} b \setminus a \Rightarrow a \preccurlyeq_{Q^k} b$: by hypothesis there exists $\omega \in S_h$
   such that $a\setminus b \leqslant_{Q^h} \omega \left(b\setminus a\right)$. Let $\sigma, \tau \in S_k$
   be the permutations such that $\sigma a =(u_1,\ldots,u_m,v_1,\ldots,v_h)$ and $\tau b =(u_1,\ldots,u_m,z_1,\ldots,z_h)$,
   where $(u_1,\ldots,u_m)=a \cap b$, $(v_1,\ldots,v_h)=a\setminus b$ and $(z_1,\ldots,z_h)=\omega \left(b\setminus a\right)$. Hence $\sigma a \leqslant_{Q^k} \tau b$
   and this implies $a \preccurlyeq_{Q^k} b$.
\end{enumerate}
Notice that $a^c \setminus b^c=b\setminus a$ and $b^c \setminus a^c =a\setminus b$;
 hence, by the previous claim we have that
\begin{eqnarray*}
   a \preccurlyeq_{Q^k} b & \Leftrightarrow&  a \setminus b \preccurlyeq_{Q^h} b \setminus a\\
   &\Leftrightarrow& b^c \setminus a^c \preccurlyeq_{Q^h} a^c \setminus b^c \\
   &\Leftrightarrow & b^c \preccurlyeq_{Q^{n-k}} a^c,
 \end{eqnarray*} where $h:=k-|a\cap b|$. \end{proof}

\begin{oss}
  By the proof of Proposition \ref{prop duali}, we know that $a \preccurlyeq_{Q^k} b$ if and only if $a \setminus b \preccurlyeq_{Q^h} b \setminus a$, for all $a,b \in [n]^k_<$,
  where $h:=k-|a\cap b|$.
  This is very useful when dealing with explicit examples of the poset $Q^k_<$.
\end{oss}

Let $Q\in \pos(n)$; there exists a
representation $\pi^k_Q:I^*(Q;\F)\rightarrow \aut\bigl(\bigwedge^kV \bigr)$ given by diagonal action:
$$A(v_1 \wedge \ldots \wedge v_k)=Av_1 \wedge \ldots \wedge Av_k,$$ for every $A\in I^*(Q;\F)$ and $v_1,\ldots,v_k \in V$.

\begin{thm} \label{teorema sottogruppo Qk}
  The group morphism $\pi^k_Q$ is injective and $\pi^k_Q(I^*(Q;\F))$ is a subgroup of the incidence group $I^*(Q^k_<;\F)$.
\end{thm}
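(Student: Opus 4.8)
The statement has two parts: injectivity of $\pi^k_Q$ and the fact that its image lands in $I^*(Q^k_<;\F)$. I would treat them separately, starting with injectivity. The wedge representation $\pi^k_Q$ is the restriction to $I^*(Q;\F)$ of the $k$-th exterior power representation of $\gl(n,\F)$, which is faithful (for instance, $A$ is recovered from its action on the decomposable vectors $e_{i_1}\wedge\cdots\wedge e_{i_{k-1}}\wedge e_j$, or more simply: if $k<n$ one reads off $A$ from its action on $(k-1)$-subsets together with a single varying index, and for $k=n$ the representation is the determinant but then $I^*(Q;\F)$ need not be in its kernel only when $n=1$; in fact for $k=n$ one checks directly that $\det$ is still injective on an incidence \emph{group}? no — here I must be careful). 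Actually the cleanest route: $\pi^k_Q$ injective follows because $\gl(n,\F)\to\aut(\bigwedge^k V)$ is injective for $1\le k\le n-1$, and the case $k=n$ with $n\ge 2$ is excluded in the relevant applications; alternatively, for $k=n$ note $I^*(Q;\F)\cap\ker\det$ is nontrivial, so strictly speaking one should assume $k\le n-1$ or argue that the paper only needs $k<n$. I would state injectivity of the ambient exterior power map as the standard input and cite it.

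The substantive content is the second claim: $\pi^k_Q(A)$ is a matrix lying in $I^*(Q^k_<;\F)$ when we index the basis $\{e_{i_1}\wedge\cdots\wedge e_{i_k}: (i_1,\dots,i_k)\in[n]^k_<\}$ by $[n]^k_<$ ordered lexicographically (which is a legitimate linear extension of $Q^k_<$ by Proposition \ref{proposizione lex}). First I would fix $A\in I^*(Q;\F)$ and $b=(b_1,\dots,b_k)\in[n]^k_<$, and expand $\pi^k_Q(A)(e_{b_1}\wedge\cdots\wedge e_{b_k}) = (Ae_{b_1})\wedge\cdots\wedge(Ae_{b_k})$. Since $A\in I(Q;\F)$, each $Ae_{b_h}=\sum_{c} A_{c,b_h} e_c$ where $A_{c,b_h}\ne 0$ forces $c\le_Q b_h$ (with $c=b_h$ allowed). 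Expanding the wedge product multilinearly, a nonzero term indexed by $a=(a_1,\dots,a_k)\in[n]^k_<$ arises only if there is a way to pick, for each $h$, an index $c_h\le_Q b_h$ with $\{c_1,\dots,c_k\}=\{a_1,\dots,a_k\}$ as sets. That is exactly the condition that there exists $\sigma\in S_k$ with $(a_{\sigma(1)},\dots,a_{\sigma(k)})\le_Q (b_1,\dots,b_k)$ componentwise, i.e. $a\preccurlyeq_Q b$. Hence the $(a,b)$-entry of the matrix of $\pi^k_Q(A)$ vanishes whenever $a\not\preccurlyeq_Q b$, which is precisely the defining condition for membership in $I(Q^k_<;\F)$; invertibility of $\pi^k_Q(A)$ (it is in $\gl(\binom{n}{k},\F)$ since $A$ is invertible and $\bigwedge^k$ preserves invertibility) then puts it in $I^*(Q^k_<;\F)$.

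There is one bookkeeping point I would not skip: I must confirm that the ``diagonal'' entries $a=b$ genuinely occur and that I have the direction of the partial order right, namely that the matrix is \emph{upper-triangular} with respect to the lex order — but this is automatic, since the vanishing condition ``$A_{ab}\ne 0\Rightarrow a\preccurlyeq_Q b$'' combined with Proposition \ref{proposizione lex} gives $a\le_{\mathrm{lex}} b$, so the matrix is upper triangular, and $a=b$ gives the product of diagonal entries $\prod_h A_{b_h,b_h}\ne 0$. I expect the main (though still mild) obstacle to be articulating cleanly the combinatorial equivalence ``$\exists$ a system of representatives $c_h\le_Q b_h$ with underlying set $\{a_1,\dots,a_k\}$'' $\iff$ ``$a\preccurlyeq_Q b$'': one should note that $a_i<a_j$ for $i<j$ forces the $c_h$ to be distinct, so the assignment $h\mapsto c_h$ is a bijection onto $\{a_1,\dots,a_k\}$, hence given by $c_h = a_{\sigma^{-1}(h)}$ for a unique $\sigma\in S_k$, and then $c_h\le_Q b_h$ for all $h$ reads $a_{\sigma^{-1}(h)}\le_Q b_h$, i.e. $\sigma(a_1,\dots,a_k)\le_Q(b_1,\dots,b_k)$ in the notation of Definition \ref{ordine k}. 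This is exactly $a\preccurlyeq_Q b$. Everything else is routine multilinear algebra.
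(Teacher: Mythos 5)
Your argument for the containment $\pi^k_Q(I^*(Q;\F))\subseteq I^*(Q^k_<;\F)$ is correct and is essentially the paper's own proof: expand $(Ae_{b_1})\wedge\cdots\wedge(Ae_{b_k})$ using $A_{c,b_h}\neq 0\Rightarrow c\leqslant_Q b_h$, and observe that a surviving basis vector $e_{a_1}\wedge\cdots\wedge e_{a_k}$ corresponds to a choice $h\mapsto c_h$ which, the $a_i$ being distinct, is a bijection onto $\{a_1,\dots,a_k\}$, i.e.\ to some $\sigma\in S_k$ with $\sigma a\leqslant_Q b$, which is exactly $a\preccurlyeq_Q b$. Your extra bookkeeping about upper-triangularity with respect to the lexicographic linear extension (Proposition \ref{proposizione lex}) is precisely what is needed to read $I^*(Q^k_<;\F)$ as a matrix group; the paper leaves this implicit, so on this half you are, if anything, more complete.

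The injectivity half of your proposal has a genuine gap. The ``standard input'' you want to cite --- that $\gl(n,\F)\rightarrow\aut\bigl(\bigwedge^kV\bigr)$ is faithful for $1\leqslant k\leqslant n-1$ --- is false in general: for $1\leqslant k\leqslant n-1$, invariance of every $k$-dimensional subspace only forces $A$ to be a scalar matrix $\lambda\id_n$, and then $\pi^k_Q(\lambda\id_n)=\lambda^k\id$, so the kernel is $\{\lambda\id_n:\lambda^k=1\}$. Since scalar matrices lie in $I^*(Q;\F)$ for every $Q$, this kernel meets $I^*(Q;\F)$ nontrivially whenever $\F$ contains a nontrivial $k$-th root of unity (for instance $-\id_n$ with $k=2$ over $\R$ or $\C$), not only in the case $k=n$ that you flagged; so neither of the recovery arguments you sketch can work as stated. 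To be fair, the paper's own one-line argument (``every $k$-dimensional subspace is $A$-invariant, hence $A=\id_n$'') has exactly the same defect, so the obstruction lies in the statement rather than in your route: injectivity holds only modulo these scalars, or under the hypothesis that $1$ is the only $k$-th root of unity in $\F$. Your suspicion about $k=n$ was well placed --- it simply applies to more cases than you allowed, and a correct write-up should either add the roots-of-unity hypothesis or weaken the claim to injectivity up to scalars.
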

\begin{proof}
  Let $A\in I^*(Q;\F)$ such that $\pi^k_Q(A)=\id$. Then any subspace of dimension $k$ of $V$ is $A$-invariant. This implies that $A=\id_n$.
  Moreover we have that, for $i \in Q^k_<$,
  \begin{eqnarray*}
    Ae_{i_1} \wedge \ldots \wedge Ae_{i_k} &=& \left(\sum\limits_{h \in i_1^\downarrow} A_{h,i_1}e_h\right)\wedge \ldots \wedge \left(\sum\limits_{h \in i_k^\downarrow} A_{h,i_k}e_h\right) \\
     && \in \bigoplus\limits_{j \in I}\spn_{\F}\left\{e_{j_1} \wedge \ldots \wedge e_{j_k}\right\},
  \end{eqnarray*} where $I:=\left\{j\in [n]^k_<:j\preccurlyeq_{Q^k} i\right\}$.
\end{proof}

Let $\phi: \gr_{\F}(k,n) \rightarrow \mathbb{P}\bigl(\bigwedge^kV \bigr)$ be the Pl\"ucker embedding.
According to the action of the incidence group $I^*(Q^k_<;\F)$ on $\mathbb{P}\bigl(\bigwedge^kV \bigr)$, we provide an incidence stratification of the Grassmannian $\gr_{\F}(k,n)$.

\begin{dfn} \label{def schub grass}
  Let $Q_I(\F)$ be an orbit of the action of $I^*(Q^k_<;\F)$ on the projective space $\mathbb{P}\bigl(\bigwedge^kV \bigr)$, for any order ideal $I \in \mathcal{J}(Q^k_<)$. The set $$[Q]_I(\F):=(Q^k_<)_I(\F) \cap \phi(\gr_{\F}(k,n))$$ is called $Q$-\emph{Schubert cell} of $\gr_{\F}(k,n)$ whenever $[Q]_I(\F)\neq \varnothing$. A $Q$-\emph{Schubert variety} in $\gr_{\F}(k,n)$ is $\overline{[Q]_I(\F)}:=\overline{(Q^k_<)_I(\F)} \cap \phi(\gr_{\F}(n,k))$.
\end{dfn}

The next result follows directly from Definition \ref{def schub grass} and Theorem \ref{pspazio}.

\begin{prop}
 Let $I\in \mathcal{J}(Q^k_<)$ and $[Q]_I(\F)$ be a $Q$-Schubert cell.  We have that
 $$\overline{[Q]_I(\F)}=\biguplus\limits_{H \in \mathcal{J}(I)} [Q]_H(\F).$$
\end{prop}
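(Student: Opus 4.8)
The plan is to unwind the two definitions involved and reduce everything to Theorem~\ref{pspazio}. Recall that $\overline{[Q]_I(\F)} := \overline{Q_I(\F)} \cap \phi(\gr_{\F}(k,n))$, where the cell $Q_I(\F)$ and its closure refer to the action of $I^*(Q^k_<;\F)$ on $\mathbb{P}\bigl(\bigwedge^kV\bigr)$; this is legitimate because, by Proposition~\ref{proposizione lex}, the poset $Q^k_<$ sits in $\pos\bigl(\binom{n}{k}\bigr)$, so Theorem~\ref{pspazio} and the definition of $\overline{Q_I(\F)}$ following it apply verbatim and give $\overline{Q_I(\F)} = \biguplus_{H \in \mathcal{J}(I)\setminus\{\varnothing\}} Q_H(\F)$, with the cells $Q_H(\F)$ pairwise disjoint (they form a partition of the whole projective space).

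First I would intersect both sides of this identity with $\phi(\gr_{\F}(k,n))$ and distribute the intersection over the union:
\[
\overline{[Q]_I(\F)} = \Bigl(\,\biguplus_{H \in \mathcal{J}(I)\setminus\{\varnothing\}} Q_H(\F)\Bigr) \cap \phi(\gr_{\F}(k,n)) = \bigcup_{H \in \mathcal{J}(I)\setminus\{\varnothing\}} \bigl(Q_H(\F) \cap \phi(\gr_{\F}(k,n))\bigr).
\]
Each term on the right is by definition $[Q]_H(\F)$, with the understanding that $[Q]_H(\F) = \varnothing$ precisely when this intersection is empty, i.e.\ when $H$ does not index a genuine $Q$-Schubert cell. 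Since the $Q_H(\F)$ are pairwise disjoint by Theorem~\ref{pspazio}, the subsets $[Q]_H(\F)$ are pairwise disjoint as well, so the union above is in fact a disjoint union; adopting the convention $[Q]_\varnothing(\F) := \varnothing$ then allows one to write the index set as $\mathcal{J}(I)$ rather than $\mathcal{J}(I)\setminus\{\varnothing\}$, yielding the stated equality.

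As the paper itself signals, this is a formal consequence of Theorem~\ref{pspazio} together with the elementary fact that intersection distributes over (disjoint) unions, so there is no serious obstacle. The only point deserving a word of care is the bookkeeping of empty terms: some $H \in \mathcal{J}(I)$ may give $Q_H(\F) \cap \phi(\gr_{\F}(k,n)) = \varnothing$ and hence not a \emph{bona fide} $Q$-Schubert cell, but empty sets do not affect a disjoint union, so the formula holds exactly as written.
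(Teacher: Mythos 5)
Your proposal is correct and follows exactly the route the paper intends: unwind the definition $\overline{[Q]_I(\F)}=\overline{Q_I(\F)}\cap\phi(\gr_{\F}(k,n))$, apply Theorem~\ref{pspazio} (via the linear extension of $Q^k_<$ from Proposition~\ref{proposizione lex}) to write $\overline{Q_I(\F)}$ as a disjoint union of cells, and distribute the intersection. Your extra care about empty terms and the index set $\mathcal{J}(I)$ versus $\mathcal{J}(I)\setminus\{\varnothing\}$ is a harmless and sensible clarification of a point the paper leaves implicit.
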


Now we are going to prove that, in a Grassmannian variety, a $c_n$-Schubert cell is a Schubert cell. In other words, a Schubert cell
is the intersection of $\gr_{\F}(k,n)$ with a $(c_n)^k_<$-Schubert cell of the projective space $\mathbb{P}\bigl(\bigwedge^kV \bigr)$.

\begin{prop} \label{celle classiche GR}
 Let $\sigma \in S_n^{(k)}$ be a Grassmannian permutation and $C_\sigma(\F)$ the corresponding Schubert cell of $\gr_{\F}(k,n)$.
 Then $$C_\sigma(\F)= [c_n]_{I_\sigma}(\F),$$
 where $I_\sigma:=\left\{a\in [n]^k_<: a\preccurlyeq_{c^k_n}(\sigma(1),\ldots,\sigma(k))\right\}$.
\end{prop}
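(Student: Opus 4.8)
The plan is to identify the Schubert cell $C_\sigma(\F)$ of $\gr_{\F}(k,n)$ explicitly as a subset of the Grassmannian, and then match it with the orbit $Q_{I_\sigma}(\F)$ described by Theorem \ref{pspazio}, intersected with the Plücker image. Recall that the classical Schubert cell attached to a Grassmannian permutation $\sigma \in S_n^{(k)}$ (equivalently, to the $k$-subset $\{\sigma(1),\dots,\sigma(k)\}\in [n]^k_<$) consists of those $W \in \gr_{\F}(k,n)$ whose row-echelon form has pivot columns exactly in positions $\sigma(1)<\dots<\sigma(k)$. The key elementary fact I would invoke is the translation of ``pivot positions'' into Plücker coordinates: for $W\in \gr_{\F}(k,n)$, the pivot set of $W$ equals the lexicographically largest $a\in [n]^k_<$ with nonzero Plücker coordinate $p_a(W)$; equivalently it is the unique maximal element, in the lex order (which by Proposition \ref{proposizione lex} dominates $\preccurlyeq_{c_n}$, and here in fact coincides with $\leqslant_{c_n}=\preccurlyeq_{c_n}$ by Proposition \ref{refinem}), of the support $M(W)=\{a : p_a(W)\neq 0\}$.

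**First I would** unwind what $Q_{I_\sigma}(\F)\cap \phi(\gr_{\F}(k,n))$ means concretely. By Theorem \ref{pspazio}, with $Q=(c_n)^k_<$ viewed as a poset on $[n]^k_<$ via the lex linear extension, the cell $Q_{I_\sigma}$ inside $\mathbb{P}\bigl(\bigwedge^k V\bigr)$ is $\mathbb{P}(V_{I_\sigma})\setminus \bigcup_{a\in\max(I_\sigma)}\mathbb{P}(V_{I_\sigma\setminus\{a\}})$, where $V_{I_\sigma}=\spn\{e_{a_1}\wedge\cdots\wedge e_{a_k} : a\preccurlyeq_{c_n}(\sigma(1),\dots,\sigma(k))\}$. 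Since $\sigma\in S_n^{(k)}$, the ideal $I_\sigma$ is principal with unique maximum $b_\sigma:=(\sigma(1),\dots,\sigma(k))$, so the condition ``$\phi(W)\in Q_{I_\sigma}(\F)$'' says precisely: every nonzero Plücker coordinate $p_a(W)$ has $a\preccurlyeq_{c_n} b_\sigma$, and $p_{b_\sigma}(W)\neq 0$ (the removal of the single maximum $b_\sigma$ is what the set difference enforces). In other words, $\phi(W)\in Q_{I_\sigma}(\F)$ iff $b_\sigma$ is the unique maximum of the support $M(W)$ in the order $\preccurlyeq_{c_n}$. By the pivot-position fact above, this is exactly the statement that $W$ has pivot set $\{\sigma(1),\dots,\sigma(k)\}$, i.e. $W\in C_\sigma(\F)$.

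**Then I would** assemble these two observations into the claimed equality $C_\sigma(\F) = Q_{I_\sigma}(\F)\cap\phi(\gr_{\F}(k,n))$: one inclusion follows because a matrix in echelon form with pivots at $\sigma(1),\dots,\sigma(k)$ has all Plücker coordinates below $b_\sigma$ and $p_{b_\sigma}\neq 0$; the reverse inclusion follows because the pivot set is determined by (the $\preccurlyeq_{c_n}$-maximum of) the Plücker support, so the orbit condition forces the correct pivots. I should also note, for cleanliness, that $\preccurlyeq_{c_n}=\leqslant_{c_n}$ on $[n]^k_<$ by Proposition \ref{refinem}, so that $a\preccurlyeq_{c_n}b_\sigma$ literally reads $a_h\leqslant \sigma(h)$ for all $h$, matching the Gale/Bruhat description of Proposition \ref{proposizione Bruhat quoziente}, which is the standard combinatorial description of the Schubert cell closure order; this makes the identification transparent.

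**The main obstacle** I anticipate is the bookkeeping lemma linking pivot positions to Plücker coordinates in the correct (lex $=\preccurlyeq_{c_n}$) order — specifically, showing both that an echelon representative forces $M(W)\preccurlyeq_{c_n}$-below its pivot set, and that $p_{b_\sigma}(W)\neq 0$ exactly pins down the pivots. This is classical (it is the standard proof that Schubert cells are affine spaces of the expected dimension), but it is the one place where a genuine computation with minors of the echelon matrix is needed rather than a formal manipulation of orbits; everything else is a direct unpacking of Theorem \ref{pspazio} together with Propositions \ref{refinem} and \ref{proposizione lex}.
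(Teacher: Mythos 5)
Your proposal is correct in substance, but it takes a genuinely different route from the paper's. The paper argues softly and coordinate-free: by Theorem \ref{teorema sottogruppo Qk} the Pl\"ucker representation sends $I^*(c_n;\F)$ into $I^*((c_n)^k_<;\F)$, so every $(c_n)^k_<$-cell of $\mathbb{P}\bigl(\bigwedge^kV\bigr)$ is a union of Borel orbits, hence meets $\phi(\gr_{\F}(k,n))$ in a union of Schubert cells; since $C_\sigma$ contains the coordinate point $[e_{\sigma(1)}\wedge\cdots\wedge e_{\sigma(k)}]$, which lies in $Q_{I_\sigma}(\F)$, and since $\sigma\mapsto I_\sigma$ is injective while both families partition $\phi(\gr_{\F}(k,n))$, the intersection $Q_{I_\sigma}(\F)\cap\phi(\gr_{\F}(k,n))$ can contain no Schubert cell other than $C_\sigma$ --- no minor computation is needed. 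You instead unpack Theorem \ref{pspazio} to read the cell membership off the Pl\"ucker support (using that $I_\sigma$ is principal, $\phi(W)\in Q_{I_\sigma}(\F)$ iff $b_\sigma$ is the $\preccurlyeq_{c_n}$-maximum of $M(W)$) and match this with the echelon description of $C_\sigma$; this is a valid and more explicit argument, at the price of the classical bookkeeping lemma you correctly single out. Two cautions there. First, the fact you need is exactly that the support of an echelon representative is componentwise bounded by the pivot set, not merely that the pivot set is lex-largest; your parenthetical claiming the lex order ``coincides with $\leqslant_{c_n}=\preccurlyeq_{c_n}$'' is false as stated --- lex is only a linear extension (Proposition \ref{proposizione lex}), while Proposition \ref{refinem} identifies $\preccurlyeq_{c_n}$ with the componentwise order, not with lex. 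Second, make the pivot convention match the paper's Schubert cells, which are orbits of the \emph{upper}-triangular group $I^*(c_n;\F)$: there the pivots are the last nonzero coordinates of a basis, so the support is bounded \emph{above} by $b_\sigma$, whereas the usual leftmost-pivot reduced row echelon form would bound the support below and correspond to the opposite Borel. Neither point is a gap in the idea, but both must be fixed for the computation to go through.
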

\begin{proof}
  A Schubert cell $C_\sigma(\F)$ in $\phi(\gr_{\F}(k,n))$ is an orbit under the action of $\pi^k_{c_n}(I^*(c_n;\F))$ of the line $\spn_{\F}\left\{e_{\sigma(1)}\wedge \ldots \wedge e_{\sigma(k)}\right\}$. By Theorem \ref{teorema sottogruppo Qk}, the group $\pi^k_{c_n}(I^*(c_n;\F))$ is a subgroup of $I^*((c_n)^k_<;\F)$; therefore the orbits of the action of $I^*((c_n)^k_<;\F)$ on $\mathbb{P}\bigl(\bigwedge^kV \bigr)$ are partitioned into orbits of $\pi^k_{c_n}(I^*(c_n;\F))$. But the Schubert cells give a partition of $\gr_{\F}(k,n)$, so the result follows.
\end{proof}

By the fact that a Schubert variety is union of Schubert cells according to the Bruhat order of $S^{(k)}_n$, the $c_n$-Schubert varieties in $\gr_{\C}(k,n)$ are exactly the Schubert varieties.

We define the set of $Q$-Schubert cells of $\gr_{\F}(k,n)$ as $$Q^k_B(\F):=\left\{I\in \mathcal{J}(Q^k_<):[Q]_I(\F)\neq \varnothing\right\}.$$
\begin{dfn} \label{Q Bruhat}
  Let $Q\in \pos(n)$. We call $(Q^k_B(\F),\subseteq)$ the $Q$-\emph{Bruhat poset} of $\gr_{\F}(k,n)$.
\end{dfn}

\begin{oss}
  By Proposition \ref{celle classiche GR}, the $c_n$-Bruhat poset of $\gr_{\F}(k,n)$ is isomorphic to $S_n^{(k)}$ with the Bruhat order,
  which by Proposition \ref{proposizione Bruhat quoziente} is isomorphic to $([n]^k_<,\leqslant_{c_n^k})$.
  Moreover, by Proposition \ref{refinem}, these posets are isomorphic to $(c_n)^k_<$.
\end{oss}

Notice that the poset $(Q^1_B(\F),\subseteq)$ is equal to $(\mathcal{J}(Q)\setminus \left\{\varnothing\right\},\subseteq)$, for all $Q\in \pos(n)$;
see Theorem \ref{pspazio}.

We provide a characterization of the $Q$-Schubert cells in terms of matroids representable over $\F$.

\begin{thm} \label{teorema matroidi}
  Let $Q\in \pos(n)$,  $k\in [n]$ and $I\in \mathcal{J}(Q^k_<)$. Then $[Q]_I(\F)\neq \varnothing$ if and only if
  $\max(I) \cup I'$ is a matroid representable over $\F$, for some subset $I' \subseteq I$.
\end{thm}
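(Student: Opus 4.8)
The plan is to unravel the definition of $[Q]_I(\F)$ until the statement becomes a direct reading of Plücker coordinates. By definition, $[Q]_I(\F)\neq\varnothing$ means exactly that $\phi(W)\in Q_I(\F)$ for some $W\in\gr_\F(k,n)$, where $Q_I(\F)$ is the $Q^k_<$-cell of $\mathbb{P}\bigl(\bigwedge^kV\bigr)$. The poset $Q^k_<$ lies in $\pos\bigl(\binom nk\bigr)$ (Proposition \ref{proposizione lex}) and acts on $\bigwedge^kV$, which has basis $\{e_a:a\in[n]^k_<\}$; Theorem \ref{pspazio} then gives
$$Q_I(\F)=\mathbb{P}(V_I)\setminus\bigcup_{i\in\max(I)}\mathbb{P}\bigl(V_{I\setminus\{i\}}\bigr),\qquad\text{where }V_J:=\spn_\F\{e_a:a\in J\}.$$
Here $I\setminus\{i\}$ is again an order ideal of $Q^k_<$ for $i\in\max(I)$, so the spaces $V_{I\setminus\{i\}}$ are well defined.

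The central step is to translate this into a condition on the matroid $M(W)$. Writing a Plücker representative of $W$ as $\sum_{a\in[n]^k_<}p_a\,e_a$, we have $M(W)=\{a:p_a\neq 0\}$ by definition, so $\phi(W)\in\mathbb{P}(V_I)$ is equivalent to $M(W)\subseteq I$ and, assuming this, $\phi(W)\notin\mathbb{P}(V_{I\setminus\{i\}})$ is equivalent to $i\in M(W)$. Hence $\phi(W)\in Q_I(\F)$ if and only if $\max(I)\subseteq M(W)\subseteq I$. Since $M(W)$ is always a matroid representable over $\F$, and conversely every $\F$-representable matroid of rank $k$ on $[n]$ arises as $M(W)$ for a suitable $W$, we conclude that $[Q]_I(\F)\neq\varnothing$ if and only if there exists an $\F$-representable matroid $M$ with $\max(I)\subseteq M\subseteq I$.

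It then remains to recognise that this last condition is precisely the one in the statement. Given such an $M$, set $I':=M\setminus\max(I)$; then $I'\subseteq I$ and $M=\max(I)\cup I'$. Conversely, for any $I'\subseteq I$ one has $\max(I)\subseteq\max(I)\cup I'\subseteq I$ because $\max(I)\subseteq I$, so if $\max(I)\cup I'$ happens to be a representable matroid it serves as the required $M$. I do not expect a genuine obstacle: the proof is an unwinding of definitions, and the two points that need a moment's care — that $I\setminus\{i\}$ remains an order ideal, and that $M(W)$ is genuinely an $\F$-representable matroid — are both already established in the excerpt (Theorem \ref{pspazio} and the paragraph defining $M(W)$).
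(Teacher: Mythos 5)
Your proof is correct and takes essentially the same route as the paper: both reduce the claim to Theorem \ref{pspazio}, which identifies the cell $Q_I(\F)$ with the points of $\mathbb{P}\bigl(\bigwedge^kV\bigr)$ whose support (set of nonzero Pl\"ucker coordinates) lies between $\max(I)$ and $I$, and then read off the equivalence from the definitions of $M(W)$ and of representability. The paper merely phrases this via the action of $I^*(Q^k_<;\F)$ on the representative $\sum_{a\in\max(I)}e_a$ of the cell, which is the same computation in orbit language.
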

\begin{proof}
 Let $I'\subseteq I$ be any subset. The result follows by observing that $\max(I) \cup I'$ is the set of bases of a matroid representable over $\F$ if and only if there exists $A\in I^*(Q^k_<;\F)$ such that
 $$A\left(\sum \limits_{i\in \max(I)}e_{i_1}\wedge \ldots \wedge e_{i_k}\right)=\sum \limits_{i\in \max(I) \cup I'}a_{i}e_{i_1}\wedge \ldots \wedge e_{i_k}$$  is an element of  $\phi(\gr_{\F}(k,n))$, where $a_i \in \F\setminus \left\{0\right\}$ for all $i\in \max(I)$.
\end{proof}

\begin{oss} \label{oss weak order matroidi}
  By Theorem \ref{teorema matroidi}, $((t_n)_B^k(\F),\subseteq)$ is the poset of representable matroids on $\F$ of rank $k$ on the set $[n]$,
ordered by inclusion of the sets of bases. This is the so called \emph{weak order} on matroids, see e.g. \cite[Chapter~7]{oxley} and \cite[Chapter~9]{Theory of matroids}.
\end{oss}

\begin{oss}
  It follows by basic topology that the Zariski closure of the orbit corresponding to a matroid $M$ in the matroid stratification of $\gr_{\mathbb{C}}(k,n)$  is included in $\overline{[t_n]_M(\mathbb{C})}$.
  This inclusion can be strict as in \cite[Counterexample~2.6]{Ford}.
Notice that the defining ideal of the $t_n$-Schubert variety $\overline{[t_n]_M(\C)}$ in $\gr_{\F}(k,n)$ is the Grassmannian ideal $P_M$ of the matroid $M$, as defined in \cite[Section~3]{slack}.
\end{oss}

From the fact that a singleton $\left\{(i_1,\ldots,i_k)\right\}$ is always the set of bases of a matroid representable over any field, we deduce the following corollary.
\begin{cor} \label{corollario ideale principale}
  If $I\in \mathcal{J}(Q^k_<)$ is a principal order ideal, then $[Q]_I(\F)\neq \varnothing$.
\end{cor}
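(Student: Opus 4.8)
The plan is to reduce the statement directly to Theorem \ref{teorema matroidi}. First I would unwind the definition of a principal order ideal in the poset $Q^k_<$: if $I\in\mathcal{J}(Q^k_<)$ is principal, then $I=a^\downarrow=\{b\in[n]^k_<:b\preccurlyeq_Q a\}$ for a single element $a=(i_1,\ldots,i_k)\in[n]^k_<$, and consequently $\max(I)=\{a\}$ is a singleton. This is the only structural fact about $I$ that will be used.

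Next I would invoke Theorem \ref{teorema matroidi}, which characterizes non-emptiness of $[Q]_I(\F)$ as the existence of a subset $I'\subseteq I$ for which $\max(I)\cup I'$ is the set of bases of a matroid representable over $\F$. The natural choice is $I'=\varnothing$, so that $\max(I)\cup I'=\{a\}$. It then remains only to observe that a single $k$-subset $\{a\}$ is always the basis set of a representable matroid: taking the coordinate subspace $W:=\spn_\F\{e_{i_1},\ldots,e_{i_k}\}\in\gr_\F(k,n)$, the wedge $e_{i_1}\wedge\cdots\wedge e_{i_k}$ has exactly one nonzero Pl\"ucker coordinate, indexed by $a$, so $M(W)=\{a\}$. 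Hence $\{a\}$ is representable over every field, Theorem \ref{teorema matroidi} applies, and $[Q]_I(\F)\neq\varnothing$.

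I do not anticipate any real obstacle: the only points requiring (minimal) care are that a principal ideal has a unique maximal element, which is immediate from the definition of $\cdot^\downarrow$, and that the Pl\"ucker image of a coordinate subspace is supported on a single index, a direct computation with the wedge product. No genericity argument or field-dependent condition is needed, precisely because a single prescribed basis imposes no circuit relations; this is exactly the remark preceding the corollary, which the proof simply makes precise.
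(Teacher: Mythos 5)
Your proposal is correct and follows the same route as the paper: the paper deduces the corollary from Theorem \ref{teorema matroidi} precisely by noting that a singleton $\{(i_1,\ldots,i_k)\}$ is the basis set of a matroid representable over every field, which is what you make explicit via the coordinate subspace $\spn_\F\{e_{i_1},\ldots,e_{i_k}\}$ and the choice $I'=\varnothing$. No gap here; your write-up just spells out the representability step that the paper leaves as an observation.
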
 The poset $(Q^k_B(\F),\subseteq)$ has maximum $[n]^k_<$, the uniform matroid; it is not difficult to see that its minimal elements, which correspond to the minima
of the poset $Q^k_<$, are the Grassmannian permutations $\sigma$ such that the $Q$-inversion number $\inv_Q(\sigma):=\left\{(i,j)\in [n]_<^2:\sigma(j)<_Q\sigma(i)\right\}$ is zero.

\begin{ex} \label{esempio rombo}
  Let $Q \in \pos(4)$ be the poset on $[4]$ such that $1 \vartriangleleft 2$, $1 \vartriangleleft 3$, $2\vartriangleleft 4$ and $3\vartriangleleft 4$. Then $Q^2_<$ is the following poset:
  \begin{center}
\begin{tikzpicture}[scale=1.2]
  \node (a1) at (0,2) {$(2,4)$};
  \node (a2) at (2,2) {$(3,4)$};
  \node (b1) at (0,1) {$(2,3)$};
  \node (b2) at (2,1) {$(1,4)$};
 \node (c1) at (0,0) {$(1,2)$};
  \node (c2) at (2,0) {$(1,3)$};
  \draw (a1) -- (b1) -- (a1) -- (b2) (a2) -- (b1) (a2)--(b2) (b1)--(c1)(b1)--(c2) (b2)--(c1) (b2)--(c2);
\end{tikzpicture}
  \end{center}
 Let $(S_4,\left\{s_1,s_2,s_3\right\})$ be the symmetric group of order $24$ with its standard Coxeter presentation and $J:=\left\{s_1,s_3\right\}$. The $Q$-Bruhat on $\gr_{\C}(2,4)$ is then:
 \begin{center}
\begin{tikzpicture}[scale=1.5]
 \node (a1) at (1,4) {$S_4^{(2)}$};
 \node (b1) at (0,3) {$[e,s_1s_3s_2]^J$};  \node (b2) at (2,3) {$s_2[e,s_1s_3s_2]^J$};
 \node (c1) at (0,2) {$[e,s_1s_2]^J$};
  \node (c2) at (2,2) {$[e,s_3s_2]^J$};
  \node (d1) at (1,1) {$\left\{e,s_2\right\}$};
  \node (e1) at (0,0) {$\left\{e\right\}$};
  \node (e2) at (2,0) {$\left\{s_2\right\}$};
  \draw (a1) -- (b1) (a1)--(b2) (b1) -- (c1) (b1)--(c2)(b2) -- (c1) (b2)--(c2) (c1)--(d1) (c2)--(d1) (d1)--(e1) (d1)--(e2);
\end{tikzpicture}
  \end{center}
where, if $u,v\in S_n^J$ and $u \leqslant v$, then $[u,v]^J:=\left\{z\in S^J_n: u \leqslant z \leqslant v\right\}$ is a Bruhat interval in the poset $(S^J_n,\leqslant)$
and $w[u,v]^J:=\left\{P^J(wz):z\in [u,v]^J\right\}$, for all $w\in S_n$.

By using the identification of $[4]^2_<$ with $S^{(2)}_4$, we have

\begin{itemize}
  \item $\left\{e\right\}=(1,2)^\downarrow$, $\left\{s_2\right\}=(1,3)^\downarrow$ and $\left\{e,s_2\right\}=\left\{(1,2),(1,3)\right\}^\downarrow$;
  \item $[e,s_1s_2]^J=(2,3)^\downarrow$ and $[e,s_3s_2]^J = (1,4)^\downarrow$;
  \item $[e,s_1s_3s_2]^J=(2,4)^\downarrow$ and $s_2[e,s_1s_3s_2]^J=(3,4)^\downarrow$.
\end{itemize}

Notice that, by Theorem \ref{teorema matroidi}, the order ideal $\left\{(2,3),(1,4)\right\}^\downarrow$
is not an element of the $Q$-Bruhat poset, i.e. $[Q]_I(\F)=\varnothing$.
\end{ex}

By Corollary \ref{corollario ideale principale}, $|Q_B^k(\F)|\geqslant |S_n^{(k)}|$. In the next proposition we obtain directly that the Bruhat order on $S_n^{(k)}$ is the $c_n$-Bruhat poset,
  without using Proposition \ref{celle classiche GR}.
\begin{prop} \label{corollario iniezione}
 Let $n>0$ and $k\in [n]$. Then $(S_n^{(k)},\leqslant)\simeq ((c_n)_B^k(\F),\subseteq)$.
\end{prop}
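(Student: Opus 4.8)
The plan is to show that $(S_n^{(k)},\leqslant) \simeq ((c_n)^k_B(\F),\subseteq)$ by combining two facts already available in the excerpt. First, by Proposition \ref{celle classiche GR}, every Schubert cell $C_\sigma(\F)$ of $\gr_{\F}(k,n)$ is of the form $Q_{I_\sigma}(\F) \cap \phi(\gr_{\F}(k,n))$, where $I_\sigma = \{a \in [n]^k_< : a \preccurlyeq_{c_n} (\sigma(1),\dots,\sigma(k))\}$ is the principal order ideal of $(c_n)^k_<$ generated by $\sigma$; since the Schubert cells partition $\gr_{\F}(k,n)$ and the cells $Q_I(\F)$ partition $\mathbb{P}(\bigwedge^k V)$ (Theorem \ref{pspazio}), each nonempty $[c_n]_I(\F)$ must coincide with some $C_\sigma(\F)$, hence with some principal ideal $I_\sigma$. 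Conversely, by Corollary \ref{corollario ideale principale}, every principal ideal $I$ of $(c_n)^k_<$ gives a nonempty cell. So as sets, $(c_n)^k_B(\F)$ is exactly the set of principal order ideals of $(c_n)^k_<$, which is in bijection with $[n]^k_<$ via $\sigma \mapsto I_\sigma$.

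Next I would observe that this bijection is an isomorphism of posets. On the one hand, by Proposition \ref{refinem} (with $Q = c_n$, which has a maximum), $(c_n)^k_< = ([n]^k_<, \leqslant_{c_n})$, so $a \preccurlyeq_{c_n} b$ is just the componentwise order on $[n]^k_<$; by Proposition \ref{proposizione Bruhat quoziente}, identifying $S_n^{(k)}$ with $[n]^k_<$ via $\sigma \mapsto (\sigma(1),\dots,\sigma(k))$, this componentwise order is precisely the Bruhat order on $S_n^{(k)}$. On the other hand, the map sending a poset element to the principal order ideal it generates is an order embedding of any poset into its lattice of order ideals: $a \preccurlyeq_{c_n} b$ if and only if $a^\downarrow \subseteq b^\downarrow$ in $(c_n)^k_<$. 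Composing, $\sigma \leqslant \tau$ in $S_n^{(k)}$ if and only if $I_\sigma \subseteq I_\tau$, which is the required isomorphism onto $((c_n)^k_B(\F),\subseteq)$.

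The one point requiring a little care — and the main obstacle — is verifying that $(c_n)^k_B(\F)$ contains \emph{only} principal ideals, i.e.\ that no non-principal order ideal $I$ of $(c_n)^k_<$ yields a nonempty $[c_n]_I(\F)$. This is where the partition argument is essential: Proposition \ref{celle classiche GR} tells us the Schubert cells are exactly the cells $Q_{I_\sigma}(\F)$ for principal $I_\sigma$, and they already exhaust $\gr_{\F}(k,n)$; since $\{Q_I(\F)\}_{I}$ is a partition of the ambient projective space, intersecting with $\phi(\gr_{\F}(k,n))$ and using that the $Q_{I_\sigma}(\F)$ already cover it forces $Q_I(\F)\cap\phi(\gr_{\F}(k,n))=\varnothing$ for every $I$ not of the form $I_\sigma$. (Equivalently, one may invoke Theorem \ref{teorema matroidi}: $[c_n]_I(\F)\neq\varnothing$ forces $\max(I)\cup I'$ to be a representable matroid for some $I'\subseteq I$, and for $Q=c_n$ the lattice-path/Bruhat-interval structure pins this down to a principal ideal.) The remaining steps are the routine verifications that the two order-reversing/preserving identifications above are mutually inverse, which I would not belabor.
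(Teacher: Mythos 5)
Your proof is correct, but it takes a genuinely different route from the paper's — indeed, the sentence introducing Proposition \ref{corollario iniezione} states that the point is to identify the $c_n$-Bruhat poset with $(S_n^{(k)},\leqslant)$ \emph{without} using Proposition \ref{celle classiche GR}, which is precisely the ingredient your main argument leans on. The paper argues intrinsically: if $[c_n]_I(\F)\neq\varnothing$, then by Theorem \ref{teorema matroidi} some $\max(I)\cup I'$ with $I'\subseteq I$ is a representable matroid; since the Gale order $\leqslant^e$ on $[n]^k_<$ is $\preccurlyeq_{c_n}$ and every element of $\max(I)$ is $\preccurlyeq_{c_n}$-maximal in $\max(I)\cup I'$, the Maximality Property (with $\sigma=e$) forces $|\max(I)|=1$, so $I$ is principal; principal ideals give nonempty cells by Corollary \ref{corollario ideale principale}, and the poset of principal ideals of $(c_n)^k_<$ under inclusion is $(S_n^{(k)},\leqslant)$ by Propositions \ref{refinem} and \ref{proposizione Bruhat quoziente}. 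Your route instead imports the classical Schubert-cell partition of $\gr_{\F}(k,n)$ via Proposition \ref{celle classiche GR} and excludes non-principal ideals by a covering/disjointness argument in $\mathbb{P}\bigl(\bigwedge^k V\bigr)$; this is quick and valid, but it rests on the Bruhat decomposition of the Grassmannian, which is exactly the dependence the authors wish to avoid (compare the analogous remark in the introduction about Proposition \ref{corollario iniezione2}). Your parenthetical alternative via Theorem \ref{teorema matroidi} is essentially the paper's proof, except that no appeal to the lattice-path or Bruhat-interval structure is needed: the Maximality Property alone pins down $|\max(I)|=1$. The order-isomorphism step ($\sigma\mapsto I_\sigma$ is an order embedding because $\preccurlyeq_{c_n}$ is the componentwise, i.e.\ Bruhat, order on $S_n^{(k)}$) is the same in both treatments, and your handling of it is fine.
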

\begin{proof}


 Let $I_1,I_2\in \mathcal{J}((c_n)^k_<)$ such $I_1 \subseteq I_2$ and $\max(I_1)\cup I'_1$, $\max(I_2)\cup I'_2$ are matroids representable over $\F$ for some subsets $I_1'\subseteq I_1$, $I_2'\subseteq I_2$. Since the Gale order $\leqslant^e$ on $[n]_<^k$ is $\preccurlyeq_{c_n^k}$, by the Maximality Property of matroids, $|\max[\max(I_1)\cup I_1']|=|\max(I_1)|=1$ and $|\max[\max(I_2)\cup I_2']|=|\max(I_2)|=1$.

 Moreover, if $\max(I) \in (c_n)_<^k \simeq S_n^{(k)}$ and $|\max(I)|=1$, then $\max(I)$ is clearly a representable matroid. Hence, by Theorem \ref{teorema matroidi}, $I \in (c_n)_B^k(\F)$.
\end{proof}

It is natural to go on with further investigations on the $Q$-Bruhat orders introduced in this section. For instance, supported by several computational examples, we formulate a conjecture.

\begin{conge} \label{congettura 1}
  Let $Q\in \pos(n)$ and $k\in [n]$. Then the poset $(Q^k_B(\mathbb{C}),\subseteq)$ is graded with rank function $\rho(I)=\dim([Q]_I(\mathbb{C}))$,
  for all $I\in Q^k_B(\mathbb{C})$.
\end{conge}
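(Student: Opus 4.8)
The plan is to check that $\rho(I):=\dim\bigl([Q]_I(\mathbb{C})\bigr)$ is a genuine rank function on $(Q^k_B(\mathbb{C}),\subseteq)$: namely that it vanishes on the minimal elements and increases by exactly one along every covering relation, from which gradedness follows by a standard argument. The minimal elements of $Q^k_B(\mathbb{C})$ are the atoms of $\mathcal{J}(Q^k_<)$, i.e.\ the principal ideals $\{a\}=a^\downarrow$ generated by a minimal element $a$ of $Q^k_<$, and these are nonempty by Corollary \ref{corollario ideale principale}. For such an $I=\{a\}$, Theorem \ref{pspazio} gives $Q_{\{a\}}(\mathbb{C})=\mathbb{P}(V_{\{a\}})$, a single point of $\phi(\gr_{\mathbb{C}}(k,n))$, so $\rho(\{a\})=0$. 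Dually, the maximum $[Q]_Q(\mathbb{C})$ is the locus of the uniform matroid, open and dense in the irreducible variety $\phi(\gr_{\mathbb{C}}(k,n))$, so $\rho(Q)=k(n-k)$ and the whole poset ranges between these two values.

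The crux is a covering relation $I\lessdot J$ in $Q^k_B(\mathbb{C})$. For the inequality $\rho(J)\ge\rho(I)+1$ it is enough to prove that $[Q]_I(\mathbb{C})$ lies in the Zariski closure of $[Q]_J(\mathbb{C})$ inside $\phi(\gr_{\mathbb{C}}(k,n))$: since a cell is open and dense in its own closure, its boundary has strictly smaller dimension, and that boundary contains the disjoint cell $[Q]_I(\mathbb{C})$. Theorem \ref{pspazio} gives for free the inclusion $[Q]_I(\mathbb{C})\subseteq\overline{Q_J(\mathbb{C})}\cap\phi(\gr_{\mathbb{C}}(k,n))$, but this only involves the formal closure $\overline{Q_J(\mathbb{C})}=\mathbb{P}(V_J)$; to reach the \emph{true} closure of $[Q]_J(\mathbb{C})$ one needs, for each decomposable $w$ representing a point of $[Q]_I(\mathbb{C})$, a one-parameter family inside $[Q]_J(\mathbb{C})$ degenerating to $[w]$. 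I would construct such a family by combining the $I^*(Q^k_<;\mathbb{C})$-action (which preserves $[Q]_J(\mathbb{C})$) with a suitably chosen cocharacter of the diagonal torus, arranged so that the initial term of a generic point of $[Q]_J(\mathbb{C})$ is supported on $I$; the delicate point is that the limiting vector must remain decomposable, so that it lands in $\phi(\gr_{\mathbb{C}}(k,n))$ rather than merely in $\mathbb{P}\bigl(\bigwedge^kV\bigr)$, and this is exactly where representability of the intermediate matroids is used, via Theorem \ref{teorema matroidi}.

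For the reverse inequality $\rho(J)\le\rho(I)+1$ the aim is to exclude dimension jumps: if $\rho(J)\ge\rho(I)+2$, one must produce an intermediate $H\in Q^k_B(\mathbb{C})$ with $I\subsetneq H\subsetneq J$ and $\rho(I)<\rho(H)<\rho(J)$. This is the step I expect to be the main obstacle, since it amounts to a strong regularity (a shellability- or Cohen--Macaulay-type property) of the poset of representable matroids, and realization spaces of matroids are notoriously wild, by Mn\"ev-type universality. A reasonable line of attack is to prove the conjecture first for $Q=c_n$, where $(Q^k_B(\mathbb{C}),\subseteq)\simeq(S_n^{(k)},\le)$ by Proposition \ref{celle classiche GR} is graded by $\ell=\dim$; then, for a general $Q\le c_n$, one exploits that $I^*(Q^k_<;\mathbb{C})\subseteq I^*((c_n)^k_<;\mathbb{C})$ makes the $Q$-stratification a refinement of the Schubert stratification, so that one may work inside a single Schubert cell --- an affine space --- and analyse the orbits of the unipotent part of $I^*(Q^k_<;\mathbb{C})$ cover by cover, perhaps invoking a Bia\l ynicki--Birula decomposition for a generic cocharacter of $I^*(t_n;\mathbb{C})$ to force each stratum to be equidimensional. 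In every approach the persistent difficulty is the same: there is no clean combinatorial formula for $\dim[Q]_I(\mathbb{C})$ in terms of the order ideal $I$ beyond the chain case, and the conjecture predicts that, in spite of this, the dimension behaves as regularly as a rank function.
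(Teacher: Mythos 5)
The statement you are trying to prove is stated in the paper as a \emph{conjecture} (Conjecture \ref{congettura 1}); the authors give no proof, only the observations that it holds for $Q=c_n$ (where the poset is the Bruhat order on $S_n^{(k)}$, graded by length) and for $k=1$ (by Corollary \ref{corollario dimensione pspazio}). Your proposal, by your own account, is a strategy rather than a proof, and both of its pivotal steps are left open. First, to get $\rho(J)\geqslant\rho(I)+1$ across a cover $I\lessdot J$ you need $[Q]_I(\mathbb{C})\subseteq\overline{[Q]_J(\mathbb{C})}$, where the closure is the \emph{true} Zariski closure inside $\phi(\gr_{\mathbb{C}}(k,n))$, not the ambient closure $\mathbb{P}(V_J)\cap\phi(\gr_{\mathbb{C}}(k,n))$ supplied by Theorem \ref{pspazio}. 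This is exactly the kind of statement that is known to be delicate for matroid strata: the paper itself points out (citing the counterexample in \cite{Ford}) that $\overline{U\cap A}\subsetneq\overline{U}\cap A$ can occur, i.e.\ closures of strata need not be unions of strata, so your proposed torus-degeneration ``with the limit remaining decomposable'' is not a routine construction and may fail for particular ideals $I\lessdot J$; nothing in Theorem \ref{teorema matroidi} guarantees it.

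Second, the inequality $\rho(J)\leqslant\rho(I)+1$ (no dimension jumps along covers) is the heart of the conjecture, and you explicitly flag it as the main obstacle without offering an argument; the reduction to a single Schubert cell for $Q\leqslant c_n$ and the appeal to a Bia\l ynicki--Birula decomposition are suggestions, not proofs, and Mn\"ev-type universality (which you cite yourself) is precisely the reason one expects no soft general argument. So the attempt contains genuine gaps at both key steps; since the paper leaves the statement open as well, there is no proof to compare against, and what you have written should be presented as a possible line of attack on the conjecture, not as a solution. The correctly verified portions (minimal cells are points, the maximum has dimension $k(n-k)$, cells are locally closed hence open in their closures) are consistent with what the paper already knows.
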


Conjecture \ref{congettura 1} holds when $Q=c_n$, since the Bruhat order on the quotients is graded with rank function the inversion number of the permutation.
For $k=1$ the conjecture holds for every poset $Q$, by Corollary \ref{corollario dimensione pspazio}.
The dimension of the $t_n$-Schubert cells in $\gr_{\C}(k,n)$ is provided by \cite[Theorem~2.5]{transcendence degree}.

\section{Incidence stratifications of P-flag spaces} \label{sezione strati pflag}

In this section we study incidence stratifications of $\pfl(\F)$, for every field $\F$. In order to do this we embed $\pfl(\F)$ in a projective space
and we need to construct suitable posets.

Recall that $V=\spn_{\mathbb{F}}\left\{e_i:i\in [n]\right\}$.
Let $P\in \pos(n)$ and consider the function $$\phi_P: \pfl(\F)\rightarrow \mathbb{P}\left(\bigotimes\limits_{i=1}^n \bigwedge^{|i^\downarrow_P|}V\right), $$
induced by the assignment $$F \mapsto \left(v^1_1 \wedge \ldots \wedge v^1_{|1^\downarrow|} \right)\otimes \ldots \otimes \left(v^n_1 \wedge \ldots \wedge v^n_{|n^\downarrow|}\right),$$
for all $F \in \pfl(\F)$, where $\left\{v^i_1,\ldots,v^i_{|i^\downarrow|}\right\}$ is any basis of $F_i$, for all $i\in [n]$.
It is easy to see that this function is injective.

Let $Q\in \pos(n)$. There exists a
representation $$\pi_Q:I^*(Q;\F)\rightarrow \aut\left(\bigotimes\limits_{i=1}^n \bigwedge^{|i^\downarrow_P|}V \right)$$ obtained extending the action of $I^*(Q;\F)$
on $V$:
$$A\left(v^1_1 \otimes (v^2_1 \wedge \ldots \wedge v^2_{|2^\downarrow|}) \otimes \ldots \otimes (v^n_1 \wedge \ldots \wedge v^n_{|n^\downarrow|})\right)$$$$=(Av^1_1) \otimes (Av^2_1 \wedge \ldots \wedge Av^2_{|2^\downarrow|}) \otimes \ldots \otimes (Av^n_1 \wedge \ldots \wedge Av^n_{|n^\downarrow|}),$$ for all $A\in I^*(Q;\F)$.

\begin{dfn} \label{def Q alla P}
  Let $Q,P \in \pos(n)$. We define a poset $$Q^P:=Q \times Q_<^{|2^\downarrow_P|} \times \ldots \times Q_<^{|n^\downarrow_P|}.$$
\end{dfn}
By Proposition \ref{proposizione lex}, it is clear that the lexicographic order on $Q^P$ provide a natural labeling
and then we consider $Q^P\in \pos\left(|Q^P|\right)$.

\begin{thm} \label{teorema sottogruppo QP}
  The group morphism $\pi_Q$ is injective and $\pi_Q(I^*(Q;\F))$ is a subgroup of the incidence group $I^*(Q^P;\F)$.
\end{thm}
\begin{proof}
  Let $A\in I^*(Q;\F)$ be such that $\pi_Q(A)=\id$. Then $Av_1 \in \spn_{\F}\left\{v_1\right\}$ for all $v_1 \in V$, i.e.\ $A$ is the identity matrix.
  The other assertion follows by Theorem \ref{teorema sottogruppo Qk}.
\end{proof}
We can decompose the projective space $\mathbb{P}\left(\bigotimes\limits_{i=1}^n \bigwedge^{|i^\downarrow_P|}V\right)$ according to
the action of the incidence group $I^*(Q^P;\F)$, giving an incidence stratification of $\pfl(\F)$.

\begin{dfn} \label{def celle bandiere} Let $Q^P_I(\F)$ be an orbit of the action of $I^*(Q^P;\F)$ on the projective space $\mathbb{P}\left(\bigotimes\limits_{i=1}^n \bigwedge^{|i^\downarrow|}V\right)$, for any order ideal
  $I \in \mathcal{J}(Q^P)$. The set
   $$[Q]^P_I(\F):=Q^P_I(\F) \cap \phi_P(\pfl(\F))$$ is called $Q$-\emph{Schubert cell} of $\pfl(\F)$, whenever $[Q]^P_I(\F)\neq \varnothing$. A  $Q$-\emph{Schubert variety} in $\pfl(\F)$ is defined by $\overline{[Q]^P_I(\F)}:=\overline{Q^P_I(\F)} \cap \phi_P(\pfl(\F))$.
\end{dfn}
The next result follows directly from Definition \ref{def celle bandiere} and Theorem \ref{pspazio}.
\begin{prop} \label{prop chiusure}
 Let $I\in \mathcal{J}(Q^P)$ and $[Q]^P_I(\F)$ be a $Q$-Schubert cell of $\pfl(\F)$.  We have that
 $$\overline{[Q]^P_I(\F)}=\biguplus\limits_{H \in \mathcal{J}(I)} [Q]^P_H(\F).$$
\end{prop}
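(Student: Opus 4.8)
The plan is to reduce the statement directly to Theorem \ref{pspazio}, which already describes the Zariski closure of every cell $Q^P_I(\F)$ inside the ambient projective space $\mathbb{P}\left(\bigotimes_{i=1}^n\bigwedge^{|i^\downarrow|}V\right)$ as the disjoint union $\bigcup_{H\in\mathcal{J}(I)\setminus\{\varnothing\}}Q^P_H(\F)$ over the order ideals contained in $I$. First I would unwind the definitions: by definition $\overline{[Q]^P_I(\F)}=\overline{Q^P_I(\F)}\cap\phi_P(\pfl(\F))$, and by Theorem \ref{pspazio} (applied to the poset $Q^P\in\pos\bigl(\binom{?}{?}\bigr)$, which is legitimate since $Q^P$ is a Cartesian product of posets of the form $Q^k_<$, each of which lies in $\pos\binom{n}{k}$ by Proposition \ref{proposizione lex}) we have $\overline{Q^P_I(\F)}=\biguplus_{H\in\mathcal{J}(I)\setminus\{\varnothing\}}Q^P_H(\F)$, where here $\overline{Q^P_I(\F)}$ denotes the set defined via the last sentence before Corollary \ref{corollario dimensione pspazio} (which coincides with the genuine Zariski closure over $\C$).

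The second step is to intersect both sides of this identity with $\phi_P(\pfl(\F))$ and distribute the intersection over the disjoint union:
\[
\overline{[Q]^P_I(\F)}=\overline{Q^P_I(\F)}\cap\phi_P(\pfl(\F))=\biguplus_{H\in\mathcal{J}(I)\setminus\{\varnothing\}}\bigl(Q^P_H(\F)\cap\phi_P(\pfl(\F))\bigr).
\]
By definition, $Q^P_H(\F)\cap\phi_P(\pfl(\F))$ equals $[Q]^P_H(\F)$ when this intersection is non-empty, and is empty otherwise; in either case the term contributes exactly $[Q]^P_H(\F)$ (an empty cell contributing nothing), so the right-hand side is $\biguplus_{H\in\mathcal{J}(I)}[Q]^P_H(\F)$, where empty cells are silently omitted. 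Since the cells $Q^P_H(\F)$ partition the projective space, the union on the right remains disjoint after intersecting with $\phi_P(\pfl(\F))$, which justifies writing $\biguplus$. Note that the statement writes $\mathcal{J}(I)$ rather than $\mathcal{J}(I)\setminus\{\varnothing\}$; since the empty ideal gives the empty cell, including or excluding it makes no difference, and I would either match the convention in Theorem \ref{pspazio} or remark on this harmless discrepancy.

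The only genuine subtlety — and the step I would be most careful about — is the compatibility between the formal definition $\overline{Q^P_I(\F)}:=\bigcup_{H\in\mathcal{J}(I)\setminus\{\varnothing\}}Q^P_H(\F)$ used for an arbitrary field $\F$ and the honest Zariski closure, which is only asserted for $\F=\C$ in Theorem \ref{pspazio}. For the proof of Proposition \ref{prop chiusure} as stated, though, no topology is needed: $\overline{[Q]^P_I(\F)}$ and $\overline{Q^P_I(\F)}$ are \emph{defined} by those union formulas, so the argument above is purely set-theoretic bookkeeping using the partition property from Theorem \ref{pspazio}. Thus the proof is short: apply Theorem \ref{pspazio} to $Q^P$, intersect with $\phi_P(\pfl(\F))$, and use that intersection distributes over disjoint unions together with the definition of $[Q]^P_H(\F)$.
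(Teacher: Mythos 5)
Your proof is correct and follows the same route the paper intends: the paper disposes of this proposition by noting it ``follows directly from the definition and Theorem \ref{pspazio}'', which is precisely the set-theoretic bookkeeping you spell out (apply Theorem \ref{pspazio} to $Q^P$, intersect with $\phi_P(\pfl(\F))$, and use the definition of $[Q]^P_H(\F)$ together with disjointness of the cells). Your extra remarks on the $\mathcal{J}(I)$ versus $\mathcal{J}(I)\setminus\{\varnothing\}$ convention and on the formal closure over arbitrary fields are accurate clarifications of what the paper leaves implicit.
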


The following proposition asserts that, in a flag variety,
a $c_n$-Schubert cell is a Schubert cell. In other words, a Schubert cell is the
intersection of $\mathrm{Fl}_n(\F)$ with a $(c_n)^{c_n}$-cell of the projective space $\mathbb{P}\left(\bigotimes\limits_{i=1}^n \bigwedge^{i}V\right)$.
\begin{prop} \label{celle classiche F} Let $\sigma \in S_n$ and
 $C_\sigma(\F)$ be the corresponding Schubert cell of $\mathrm{Fl}_n(\F)$. Then $$C_\sigma(\F)= [c_n]^{c_n}_{I_\sigma}(\F),$$
 where the principal order ideal $I_\sigma$ of $(c_n)^{c_n}$ is defined by $$I_\sigma :=(\left\{\sigma(1)\right\}_<,\left\{\sigma(1),\sigma(2)\right\}_<,\ldots,\left\{\sigma(1),\sigma(2),\ldots,\sigma(n)\right\}_<)^\downarrow$$
 and $\left\{x_1,\ldots,x_h\right\}_< \in [n]^h_<$ is the tuple obtained ordering $x_1,\ldots,x_h$.
\end{prop}
\begin{proof}
  A Schubert cell $C_\sigma(\F)$ in $\phi_{c_n}(\mathrm{Fl}_n(\F))$ is an orbit of the flag $\phi_{c_n}(F_\sigma)$  under the action of $\pi_Q(I^*(c_n;\F))$. By Theorem \ref{teorema sottogruppo QP}, the group $\pi_Q(I^*(c_n;\F))$ is a subgroup of $I^*((c_n)^{c_n};\F)$ and we conclude as in the proof of Proposition \ref{celle classiche GR}.
\end{proof}
We define the set of $Q$-Schubert cells of $\pfl(\F)$ as $$Q^P_B(\F):=\left\{I\in \mathcal{J}(Q^P):[Q]^P_I(\F)\neq \varnothing\right\}.$$
\begin{dfn} \label{QP Bruhat order}
  Let $P,Q\in \pos(n)$. We call $(Q^P_B(\F),\subseteq)$ the $Q$-\emph{Bruhat poset} of $\pfl(\F)$.
\end{dfn}
By Propositions \ref{prop chiusure} and \ref{celle classiche F}, the $c_n$-Bruhat poset of $\mathrm{Fl}_n(\F)$ is isomorphic to $S_n$ with the Bruhat order.

In order to characterize the $Q$-Schubert cells of $\pfl(\F)$ we need to introduce a Gale order on the underlying set of $Q^P$.

\subsection{$P$-flags and the Maximality Property}

Let $P\in \pos(n)$ and define the set $$[n]^P:=\prod\limits_{i=1}^n[n]_<^{|i^\downarrow|}.$$ For example, we have that $[n]^{t_n}\simeq [n]^n$.
The symmetric group $S_n$ acts on $[n]^P$ by setting
$$\sigma\left(\left(i_{1,1}\right),\left(i_{2,1},\ldots,i_{2,|2^\downarrow|}\right),\ldots,\left(i_{n,1},\ldots,i_{n,|n^\downarrow|}\right)\right):=$$
$$\left(\left\{\sigma(i_{1,1})\right\}_<,\left\{\sigma(i_{2,1}),\ldots,\sigma(i_{2,|2^\downarrow|})\right\}_<,\ldots,\left\{\sigma(i_{n,1}),\ldots,\sigma(i_{n,|n^\downarrow|})\right\}_<\right),$$
for all $\sigma \in S_n$.

We introduce the following useful order on $[n]^P$.

\begin{dfn}
  The \emph{Gale ordering} $\leqslant^\sigma_P$  on $[n]^P$ is defined by letting
  $$a \leqslant^{\sigma}_P b \Longleftrightarrow \sigma a \leqslant_{(c_n)^P} \sigma b,$$ for all $a,b\in [n]^P$.
\end{dfn}
In particular, $([n]^P,\leqslant^e_P)=(c_n)^P$ and the Gale ordering  $\leqslant^e_{t_n}$ on $[n]^{t_n}$ is $[n]^n$ ordered componentwise.

\begin{oss} \label{oss Q^P Gale}
  As in the proof of Proposition \ref{proposizione lex},
  we have that $Q^P \hookrightarrow (c_n)^P$, for all $P\in \pos(n)$. In particular, $Q^P \hookrightarrow ([n]^P,\leqslant^e_P)$.
\end{oss}

The following definitions are crucial for the study of incidence stratifications of a $P$-flag space, see Section \ref{sezione Q-strat P-flag}.

\begin{dfn} \label{def max prop bandiere}
  A subset $\mathcal{F}\subseteq [n]^P$ has the \emph{Maximality Property} if the poset $\left(\mathcal{F},\leqslant^\sigma_P\right)$ has maximum
   for all $\sigma\in S_n$.
\end{dfn}

\begin{oss}
  By Remark \ref{osserv massimi minimi}, if $\mathcal{F}\subseteq [n]^P$ has the Maximality Property, then the poset $\left(\mathcal{F},\leqslant^\sigma_P\right)$ has minimum
   for all $\sigma\in S_n$.
\end{oss}

\begin{oss}
  It should be clear that, by definition, given $\mathcal{F}\subseteq [n]^P$ with the Maximality Property, the set $M_i:=\left\{F_i:F \in \mathcal{F}\right\}$ is a matroid of rank $|i^\downarrow|$, for all $i\in [n]$.
\end{oss}


Recall from Section \ref{subsection matroids} that $M(W)$ stands for the matroid represented by the vector space $W$.
\begin{dfn} \label{def flag repr}
  We say that $\mathcal{G} \subseteq [n]^P$ is represented by a $P$-flag $F\in \pfl(\F)$ if
  $\mathcal{G}=M(F_1)\times \ldots \times M(F_n)$.
\end{dfn}
As usual we identify $\biguplus \limits_{k=0}^n[n]^k_<$ with the power set $\mathcal{P}([n])$. We define the following subset of $[n]^P$:
$$[n]^P_{\subseteq}:=\left\{a\in [n]^P:i<_Pj \Rightarrow a_i \subsetneq a_j,\,\forall \,i,j\in [n]\right\}.$$

\begin{thm} \label{teorema P-matroidi rappresentabili}
  Let $\mathcal{G} \subseteq [n]^P$ be represented by a $P$-flag $F\in \pfl(\F)$.
  Then $\mathcal{G}$ has the Maximality Property and
  its $\leqslant_P^{\sigma}$-maximum lies in $[n]^P_{\subseteq}$,  for all $\sigma \in S_n$.
\end{thm}
\begin{proof}
By Definition \ref{def flag repr}, $\mathcal{G}_h=M(F_h)$ is a matroid, for every $h\in [n]$.
Let $\sigma \in S_n$ and $m_h^\sigma \in \mathcal{G}_h$ be the maximum of the poset $(\mathcal{G}_h,\leqslant^\sigma)$, for all $h\in [n]$. It is clear that $(m_1^{\sigma},\ldots,m_n^{\sigma})$ is
the maximum of $(\mathcal{G},\leqslant^\sigma_P)$.

Let $i<_P j$. Then $|i^\downarrow|<|j^\downarrow|$ and $(F_i,F_j)$ is a partial flag; by \cite[Theorem~1.7.3]{coxeter matroids}, the matroids $M(F_i)$ and  $M(F_j)$ are concordant (see \cite[Section~1.7.3]{coxeter matroids}). By \cite[Corollary~1.7.2]{coxeter matroids}, the pair $\left(m_i^\sigma,m_j^\sigma\right)$ satisfies $m_i^\sigma \subsetneq m_j^\sigma$, for all $\sigma \in S_n$.
\end{proof}

%
%
The next result provides a fundamental tool to describe the $t_n$-stratification of a $P$-flag space, see Corollary \ref{cor P-flag matroidi}.
\begin{prop} \label{prop intersezione}
  Let $\mathcal{F},\mathcal{G} \subseteq [n]^P$ represented by $P$-flags.
  Then $$\mathcal{F} \cap [n]^P_\subseteq = \mathcal{G} \cap [n]^P_\subseteq \Longrightarrow \mathcal{F}=\mathcal{G}.$$
\end{prop}
\begin{proof}
  By contradiction, assume $ \mathcal{F}\neq \mathcal{G}$ and let $m \in \mathcal{F}_i \setminus \mathcal{G}_i$, for some $i\in [n]$.
  Then there exists $\sigma \in S_n$ such that $m=\max\left(\mathcal{F}_i,\leqslant^\sigma\right)$.
  By Theorem \ref{teorema P-matroidi rappresentabili}  there exists $a:=\max\left(\mathcal{F},\leqslant^\sigma_P\right) \in [n]^P_\subseteq$ and,
  by hypothesis, $a\in \mathcal{G}$. Hence $m=a_i \in  \mathcal{G}_i$, a contradiction.
\end{proof}


\subsection{$Q$-stratification of $P$-flag spaces} \label{sezione Q-strat P-flag}

Now we are able to provide a characterization of $Q$-Schubert cells in the space $\pfl(\F)$.

\begin{thm} \label{teorema P-matroidi bandiera}
  Let  $P,Q\in \pos(n)$ and $I\in \mathcal{J}(Q^P)$. Then $[Q]^P_I(\F)\neq \varnothing$ if and only if
 there exists $I' \subseteq I$ such that $\max(I) \cup I'$ is represented by some $F\in \pfl(\F)$.
\end{thm}
\begin{proof}
Let $I'\subseteq I$ be any subset. The result follows by observing that $\max(I) \cup I'=M(F_1)\times \ldots \times M(F_n)$  for some $F\in \pfl(\F)$ if and only if there exists $A\in I^*(Q^P;\F)$ such that
 $$A\left(\sum \limits_{i\in \max(I)}e_{i_1} \otimes \ldots \otimes e_{i_n}\right)=\sum \limits_{i\in \max(I) \cup I'}a_{i}e_{i_1} \otimes \ldots \otimes e_{i_n}=\phi_P(F),$$ where $a_i \in \F\setminus \left\{0\right\}$ for all $i\in \max(I)$
 and we have defined $$e_x:=e_{x_1}\wedge \ldots \wedge e_{x_k}$$ for all $x\in [n]^k_<$.
\end{proof}

%
%
%

%
%

As a consequence of Theorem \ref{teorema P-matroidi bandiera} we recover the Bruhat order of $S_n$.
\begin{prop} \label{corollario iniezione2}
 We have that $\left(S_n,\leqslant\right) \simeq \left([n]^{c_n}_\subseteq,\leqslant_{(c_n)^{c_n}}\right) \simeq \left((c_n)^{c_n}_B(\F),\subseteq\right)$.
\end{prop}
\begin{proof}
The first poset isomorphism is clear by definition.
Let $I_1,I_2\in \mathcal{J}((c_n)^{c_n})$ such that $I_1 \subseteq I_2$ and $\max(I_1)\cup I'_1$, $\max(I_2)\cup I'_2$ are represented by flags, for some subsets $I_1'\subseteq I_1$, $I_2'\subseteq I_2$. Since the $c_n$-Gale order $\leqslant^e_{c_n}$ on $[n]^{c_n}$ is $(c_n)^{c_n}$, by the Maximality Property, $|\max[\max(I_1)\cup I_1']|=|\max(I_1)|=1$ and $|\max[\max(I_2)\cup I_2']|=|\max(I_2)|=1$. By Theorem \ref{teorema P-matroidi rappresentabili} we have that $\max(I_1),\max(I_2)\in [n]^{c_n}_\subseteq$.
Moreover, if $\max(I) \in [n]_\subseteq^{c_n}$ then $\max(I)$ is clearly represented by a flag. Hence, by Theorem \ref{teorema P-matroidi bandiera}, $I \in (c_n)^{c_n}_B(\F)$.
\end{proof}

The following example shows the stratification of $\mathrm{Fl}_3(\F)$ induced by the action on the projective space $\mathbb{P}\left[V\otimes \left(V\wedge V\right)\right]$ of the group $I^*(Q \times Q^2_<;\F)$, where $Q\in \pos(3)$ is one of the posets of Example \ref{esempio poset V}. In this case the factor $V\wedge V \wedge V$ is redundant.

\begin{ex}
  Let $P=c_3$ and $Q\in \pos(3)$ be the poset whose cover relations are $1 \vartriangleleft_Q 3$ and $2 \vartriangleleft_Q 3$.
  Then the poset $Q\times Q^2_<$ has the following Hasse {diagram}\footnote{We omit parentheses when writing the elements of $[n]_<$ and $[n]^2_<$.}:
  \begin{center}
 \begin{tikzpicture}[scale=1.2]
  \node (a1) at (2,2) {$(3,13)$};
  \node (a2) at (6,2) {$(3,23)$};
  \node (b1) at (0,1) {$(1,13)$};
  \node (b2) at (2,1) {$(1,23)$};
  \node (b3) at (4,1) {$(3,12)$};
  \node (b4) at (6,1) {$(2,13)$};
  \node (b5) at (8,1) {$(2,23)$};
  \node (c1) at (2,0) {$(1,12)$};
  \node (c2) at (6,0) {$(2,12)$};
  \draw (a1) -- (b1) -- (a1) -- (b3) -- (a1) -- (b4) (a2)--(b2)--(a2)--(b3)--(a2)--(b5) (b1)--(c1) (b2)--(c1) (b3)--(c1) (b3)--(c2) (b4)--(c2) (b5)--(c2);
\end{tikzpicture}
  \end{center}

By Remark \ref{oss Q^P Gale} and Theorem \ref{teorema P-matroidi rappresentabili}, the principal order ideals of $Q\times Q^2_<$ which satisfy the condition of Theorem \ref{teorema P-matroidi bandiera}
are the ones with maximum in the set $$\left\{(1,12),(2,12),(1,13),(2,23),(3,13),(3,23)\right\},$$ which corresponds to the symmetric group $S_3$. We consider $S_3$ with its standard Coxeter presentation with generators $\left\{s,t\right\}$. Then $s=213=(2,12)$, $t=132=(1,13)$, $st=231=(2,23)$, $ts=312=(3,13)$ and $sts=321=(3,23)$.

Using Theorem \ref{teorema P-matroidi rappresentabili}, the
non-principal order ideals to be considered are $\left\{(1,12),(2,12)\right\}$, which is represented by the flag $$\left(\spn_{\F}\left\{e_1+e_2\right\}, \spn_{\F}\left\{e_1\wedge e_2\right\}\right),$$ and
the order ideal $I=\left\{(1,13),(1,23),(2,13),(2,23)\right\}^\downarrow$. We have that $\max(I)$ is represented by the flag
   $$\left(\spn_{\F}\left\{e_1+e_2\right\},\spn_{\F}\left\{(e_1+e_2)\wedge e_3\right\}\right).$$
Therefore the poset $(Q^{c_n}_B,\subseteq)$ has the following Hasse diagram:
 \begin{center}
 \begin{tikzpicture}[scale=1]
  \node (a1) at (2,3) {$[n]^{c_n}$};
  \node (b1) at (0,2) {$ts^\downarrow $};
  \node (b2) at (2,2) {$I$};
  \node (b3) at (4,2) {$sts^\downarrow $};
 \node (c1) at (0,1) {$\left\{e,t\right\}$};
  \node (c2) at (2,1) {$\left\{e,s\right\}$};
  \node (c3) at (4,1) {$\left\{s,st\right\}$};
  \node (d1) at (1,0) {$\left\{e\right\}$};
  \node (d2) at (3,0) {$\left\{s\right\}$};
  \draw (a1) -- (b1) (a1) -- (b2) (a1)--(b3) (b1)--(c1) (b1)--(c2) (b2)--(c1) (b2)--(c2) (b2)--(c3) (b3)--(c2) (b3)--(c3) (c1)--(d1) (c2)--(d1)  (c2)--(d2) (c3)--(d2);
\end{tikzpicture}
  \end{center}
\end{ex}

Notice that the action of $S_n$ on $[n]^P$ restricts to an action on $[n]^P_{\subseteq}$. We are ready to introduce the notion of $P$-flag matroid which extends the one of flag matroid, see Definition \ref{def flag matr}.
\begin{dfn} \label{definizione P flag matroid}
  A subset $\mathcal{F}\subseteq [n]_\subseteq^P$ is a $P$-\emph{flag matroid} if it has the Maximality Property.
\end{dfn}
\noindent The set $[n]_\subseteq^P$ is a $P$-flag matroid, which we call \emph{uniform $P$-flag matroid}.
Notice that $c_n$-flag matroids coincide with flag matroids in $S_n$.

\begin{ex} \label{esempio tn matroidi}
 The uniform $t_2$-matroid is $[2]^2=\left\{(1,1),(1,2),(2,1),(2,2)\right\}$. We list all the $t_2$-matroids $\mathcal{F}\subsetneq [2]^2$:
  \begin{enumerate}
    \item $|\mathcal{F}|=1$: $\left\{(1,1)\right\}$,  $\left\{(1,2)\right\}$, $\left\{(2,1)\right\}$, $\left\{(2,2)\right\}$.
    \item $|\mathcal{F}|=2$: $\left\{(1,1),(1,2)\right\}$, $\left\{(1,1),(2,1)\right\}$, $\left\{(1,1),(2,2)\right\}$, $\left\{(1,2),(2,2)\right\}$,

    $\left\{(2,1),(2,2)\right\}$.
    \item $|\mathcal{F}|=3$: $\left\{(1,1),(1,2),(2,2)\right\}$, $\left\{(1,1),(2,1),(2,2)\right\}$.
  \end{enumerate}
 For instance, the set $\left\{(1,2),(2,1)\right\}$ is not a $t_2$-matroid.
\end{ex}

\begin{dfn} \label{def P-flag matroid rep}
  A $P$-flag matroid $\mathcal{F}$ is called \emph{representable} over $\F$, if there exists $\mathcal{G} \subseteq [n]^P$
  represented by $F\in \pfl(\F)$ such that $\mathcal{F}=\mathcal{G}\cap [n]^P_\subseteq$.
\end{dfn}

\begin{ex}
  Since $[n]^{t_n}_\subseteq =[n]^{t_n}$, then $((t_n)^{t_n}_B(\F),\subseteq)$ is the poset of $\F$-representable $t_n$-flag matroids. The $\F$-representable $t_2$-flag matroids are $\left\{(1,2)\right\}$, $\left\{(2,1)\right\}$, $\left\{(1,1),(1,2)\right\}$, $\left\{(1,1),(2,1)\right\}$, $\left\{(1,2),(2,2)\right\}$, $\left\{(2,1),(2,2)\right\}$ and
  the uniform one. The Hasse diagram of $(t_2)^{t_2}_B$
  is
 \begin{center}
 \begin{tikzpicture}[scale=1]
  \node (a1) at (2,2) {$\bullet$};
  \node (b1) at (0,1) {$\bullet$};
  \node (b2) at (1,1) {$\bullet$};
  \node (b3) at (3,1) {$\bullet$};
  \node (b4) at (4,1) {$\bullet$};
  \node (c1) at (1,0) {$\bullet$};
  \node (c2) at (3,0) {$\bullet$};
  \draw (a1) -- (b1) (a1) -- (b2) (a1) -- (b3)(a1) -- (b4)(b1)--(c1) (b2)--(c1) (b3)--(c2) (b4)--(c2);
\end{tikzpicture}
  \end{center}
\end{ex}

The following results extend the flag matroid stratification of a flag variety.

\begin{prop} \label{prop P-flag matroids}
  Let  $P\in \pos(n)$ and $I,J\in \mathcal{J}((t_n)^P)$ such that $[t_n]^P_I(\F)\neq \varnothing$ and $[t_n]^P_J(\F)\neq \varnothing$. Then
  $$I \cap [n]^P_\subseteq =J \cap [n]^P_\subseteq \Longrightarrow [t_n]^P_I(\F)=[t_n]^P_J(\F).$$
\end{prop}
\begin{proof}
 By Theorem \ref{teorema P-matroidi bandiera},  we have that $I=\max(I)$ and $J=\max(J)$ are represented by some $P$-flags $F$ and $G$, respectively.
 The result follows by Proposition \ref{prop intersezione}.
\end{proof}

\begin{cor} \label{cor P-flag matroidi}
The set $(t_n)_B^P(\F)$ is in bijection with the set of $\F$-representable $P$-flag matroids.
\end{cor}
\begin{proof}
 Let $I \in \mathcal{J}\left((t_n)^P\right)$.  By Theorem \ref{teorema P-matroidi bandiera}, $I\in (t_n)_B^P(\F)$ if and only if it is represented by a $P$-flag.
  By Theorem \ref{teorema P-matroidi rappresentabili}, $I$ has the Maximality Property. Then $I \cap [n]^P_\subseteq$ is a $P$-flag matroid and, by Definition \ref{def P-flag matroid rep}, it is representable over $\F$. Hence the result follows by Proposition \ref{prop P-flag matroids}.
\end{proof}

We conclude with the following conjecture.
\begin{conge} \label{congettura 2}
  Let $n>0$ and $Q,P\in \pos(n)$. Then the poset $(Q^P_B(\mathbb{C}),\subseteq)$ is graded.
\end{conge}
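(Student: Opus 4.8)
\medskip
\noindent\textbf{Towards a proof.}
The plan is to exhibit $\rho(I):=\dim_{\mathbb C}[Q]^P_I(\mathbb C)$ as a rank function, exactly as in Conjecture~\ref{congettura 1}. First I would set up the geometry. Since $\pfl(\mathbb C)$ is homogeneous, $\phi_P(\pfl(\mathbb C))$ is a single $\gl(n,\mathbb C)$-orbit in $\mathbb P\bigl(\bigotimes_{i=1}^n\bigwedge^{|i^\downarrow_P|}V\bigr)$, hence a smooth irreducible locally closed subvariety; moreover $\phi_P$ intertwines the $\gl(n,\mathbb C)$-action with $\pi_Q$, so $I^*(Q;\mathbb C)$ preserves it. Recalling that $\overline{[Q]^P_I(\mathbb C)}=\biguplus_{H\in\mathcal J(I)}[Q]^P_H(\mathbb C)$ (disjoint union over the nonempty cells), $I\mapsto\overline{[Q]^P_I(\mathbb C)}$ identifies $(Q^P_B(\mathbb C),\subseteq)$ with the poset of $Q$-Schubert varieties of $\pfl(\mathbb C)$ ordered by inclusion, and each cell is open and dense in its own closure. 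The first real task is to prove that every nonempty $[Q]^P_I(\mathbb C)=Q^P_I(\mathbb C)\cap\phi_P(\pfl(\mathbb C))$ is irreducible (equivalently, equidimensional of dimension $\rho(I)$): both $Q^P_I(\mathbb C)$, an $I^*(Q^P;\mathbb C)$-orbit by Theorem~\ref{pspazio} applied to $Q^P$, and $\phi_P(\pfl(\mathbb C))$ are irreducible and $I^*(Q;\mathbb C)$-stable, but their intersection is a union of $I^*(Q;\mathbb C)$-orbits which by Proposition~\ref{prop orbite} is typically infinite, so this needs a genuine argument using Theorem~\ref{teorema sottogruppo QP} and the explicit parametrisation of $\phi_P(\pfl(\mathbb C))$. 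Granting it, $H\subsetneq I$ in $Q^P_B(\mathbb C)$ forces $\overline{[Q]^P_H(\mathbb C)}\subsetneq\overline{[Q]^P_I(\mathbb C)}$ and hence $\rho(H)<\rho(I)$, so $\rho$ is strictly order-preserving.

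With this reduction in hand, the conjecture becomes the statement that for every $I\in Q^P_B(\mathbb C)$ the boundary $\partial_I:=\overline{[Q]^P_I(\mathbb C)}\setminus[Q]^P_I(\mathbb C)$ is \emph{pure of codimension one}. Indeed $\partial_I$ is a closed union of cells whose maximal members are precisely the $[Q]^P_H(\mathbb C)$ with $H\lessdot I$ in $Q^P_B(\mathbb C)$, so "$\rho(H)=\rho(I)-1$ for all such $H$" is exactly the assertion that $\rho$ is a rank function; and this purity is substantive, since a priori a locally closed variety can meet the boundary of its closure in codimension $\geq 2$. I would attack purity along two lines. \textbf{(a) Degeneration.} Given $x\in[Q]^P_I(\mathbb C)$ with $I$ non-minimal, pick a one-parameter subgroup $\lambda\colon\mathbb C^*\to I^*(Q;\mathbb C)$ in sufficiently general position inside the diagonal torus of $\gl(n,\mathbb C)$; then $\overline{\lambda(\mathbb C^*)\cdot x}$ is a $\mathbb P^1$ whose two $\mathbb C^*$-fixed points are coordinate $P$-flags $F^P_\tau$ lying in strictly smaller cells. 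A Bruhat-type straightening of the associated $P$-flag matroids (via Theorem~\ref{teorema P-matroidi rappresentabili} and Proposition~\ref{prop criterio flag}) should let one choose $\lambda$ so that one of these fixed points lies in a cell $[Q]^P_H(\mathbb C)$ with $\rho(H)=\rho(I)-1$; combined with upper semicontinuity of fibre dimension this produces a codimension-one stratum in $\partial_I$, and a dimension count must then confirm that every irreducible component of $\partial_I$ arises this way. \textbf{(b) Reduction to Conjecture~\ref{congettura 1}.} Using $Q^P=Q\times Q_<^{|2^\downarrow_P|}\times\dots\times Q_<^{|n^\downarrow_P|}$ and the projections $\pi^P_i$, one realises $\phi_P(\pfl(\mathbb C))$ inside a product of Grassmannians cut out by the incidences $V_i\subseteq V_j$ ($i<_Pj$), each cell $[Q]^P_I(\mathbb C)$ fibring over a locus in a product of Grassmannian $Q$-cells, and an induction on $(n,|T_P|)$ reduces matters to the Grassmannian case (known for $Q=c_n$ and for all rank-one factors). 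The obstruction to (b) is that an order ideal of a product poset does not factor as a product of order ideals, so these fibres have to be analysed directly and can drop in dimension because of the flag incidences.

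The main obstacle, common to both routes, is that $Q^P_B(\mathbb C)$ is the subposet of $\mathcal J(Q^P)$ singled out by representability of $P$-flag matroids over $\mathbb C$ (Theorem~\ref{teorema P-matroidi bandiera}), and representability is erratic: strata may be empty, reducible, or of unexpected dimension (cf.\ the remark following Proposition~\ref{celle classiche GR}), so purity of $\partial_I$ encodes a nontrivial fact about representable (flag) matroids over $\mathbb C$. A sensible first milestone is the torus case $Q=t_n$, where the conjecture asserts that the weak order on $\mathbb C$-representable $P$-flag matroids is graded by the dimension of the corresponding Gelfand--MacPherson (thin Schubert) stratum; here tools from the theory of matroid strata (moment maps, matroid subdivisions) are available, and one would then try to bootstrap to general $Q$ via $I^*(t_n;\mathbb C)\subseteq I^*(Q;\mathbb C)$ and the induced surjection on strata. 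The hard part will be proving that degenerating a representable (flag) matroid over $\mathbb C$ within its stratum closure always yields, in codimension one, another representable (flag) matroid — already for $Q=t_n$.
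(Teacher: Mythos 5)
The statement you are addressing is not proved in the paper at all: it is stated there as an open conjecture, and the authors only observe that it holds in two special cases, namely $Q=P=c_n$ (where $(Q^P_B(\mathbb{C}),\subseteq)$ is the Bruhat order on $S_n$, graded by inversion number) and $Q=c_n$, $P=t_n$ (Corollary \ref{corollario parking}, where the $c_n$-cells of $\mathrm{Fl}_{t_n}(\mathbb{C})$ are indexed by dual parking functions and the rank function is $\rho(a)=\sum_{i=1}^n(a_i-i)$). So there is no proof in the paper to compare with, and your text — as you yourself signal by writing ``towards a proof'' — is a research programme, not a proof: it would not close the conjecture even if every sketched step were granted, because the two central steps are exactly the open content and are left unestablished.

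Concretely, the gaps are these. First, your proposed rank function $\rho(I)=\dim_{\mathbb{C}}[Q]^P_I(\mathbb{C})$ is only well defined if every nonempty cell is equidimensional; but already for $Q=t_n$ these cells are (unions of) thin Schubert/GGMS-type strata, which by Mn\"ev-type universality phenomena can be reducible, non-equidimensional and arbitrarily singular, so the geometric rank function may simply not exist, and the conjecture (which asserts only gradedness, with no prescribed rank function, unlike Conjecture \ref{congettura 1}) may require a purely combinatorial rank function instead. Second, the purity-in-codimension-one claim for $\partial_I$ is the real issue — it is precisely the assertion that a representable $P$-flag matroid can always be degenerated, inside the closure of its stratum, to another representable one whose stratum has codimension exactly one — and neither of your two routes delivers it: in route (a), the $\mathbb{C}^*$-fixed points of a torus degeneration are coordinate $P$-flags $F^P_\tau$, which lie in \emph{principal-ideal} (typically very low-dimensional) cells, so exhibiting such a fixed point gives no codimension-one stratum without a genuinely new mechanism; in route (b), the reduction is to Conjecture \ref{congettura 1}, which the paper also leaves open except for $Q=c_n$ and $k=1$, so nothing is gained. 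Your closing paragraph correctly identifies these obstacles, but identifying them does not overcome them; as it stands the proposal establishes the conjecture in no case beyond those already noted in the paper.
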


By Proposition \ref{corollario iniezione2},  when $Q=P=c_n$ Conjecture \ref{congettura 2} holds, since the Bruhat order on $S_n$ is graded.
Also for $P=t_n$ and $Q=c_n$ the poset is graded, see Corollary \ref{corollario parking}.

\subsection{The $t_n$-flag space and its parking function stratification}

In this section we provide an incidence stratification of a $t_n$-flag space by \emph{parking functions}. We refer to \cite[Exercise~5.49]{StaEC2}, \cite{Van Willigenburg} and
\cite{Yan parking} for further details and references on parking functions.

\begin{dfn}
 A \emph{parking function} over $n$ is an element $a\in [n]^n$ such that $(a_1,\ldots,a_n) \leqslant_{c_n^n} (\sigma(1),\ldots, \sigma(n))$,
  for some permutation $\sigma \in S_n$.
\end{dfn}
For example $(4,1,1,1,2,6,4)$ is a parking function over $7$ whereas the element $(6,6,6,1,2,3,4)$ is not a parking function.
\begin{dfn}
 Let $a\in [n]^n$. If $a \geqslant_{c_n^n} (\sigma(1),\ldots, \sigma(n))$ for some $\sigma \in S_n$, we say that $a$ is a \emph{dual parking function} over $n$.
\end{dfn} For example $(6,3,5,1,2,7,7)$ is a dual parking function over $7$ whereas the element $(1,2,2,2,2,4,3)$ is not a dual parking function. Notice that the self-dual parking functions are the permutations.

In the following theorem we describe the $c_n$-stratification of the space $\mathrm{Fl}_{t_n}(\F)$.

\begin{thm} \label{proposizione parking}
 Let $I$ be an order ideal of $(c_n)^{t_n}$. Then $[c_n]^{t_n}_I(\F)\neq \varnothing$ if and only if $|\max(I)|=1$ and $\max(I)$ is a dual parking function.
\end{thm}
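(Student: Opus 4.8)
The plan is to transport the condition ``$[c_n]^{t_n}_I(\F)\neq\varnothing$'' through the chain of characterizations already established and then identify the resulting matroid-theoretic statement with the notion of a dual parking function. First I would invoke Theorem \ref{teorema P-matroidi bandiera} with $P=t_n$, $Q=c_n$: the cell is nonempty iff $\max(I)\cup I'$ is a $t_n$-flag matroid represented by a $t_n$-flag over $\F$, for some $I'\subseteq I$. Next, since $[n]^{t_n}\simeq [n]^n$ with Gale order $\leqslant^e_{t_n}=\prod_{k=1}^n c_n$ (the remark after Definition \ref{definizione P flag matroid}), the Gale order $\leqslant^\sigma_{t_n}$ is componentwise: $a\leqslant^\sigma_{t_n}b$ iff $\sigma(a_k)\leqslant b_k$ for all $k$ — equivalently, after reindexing, iff each coordinate, regarded in the $c_n$-Gale order $\leqslant^\sigma$ on $[n]^1_<=c_n$, is dominated. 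The Maximality Property for a $c_n$-chain is trivially satisfied (a chain always has a maximum), so being a $t_n$-flag matroid is governed only by the flag (i.e.\ ``Maximality of products of chains''); this is where the argument in the proof of Proposition \ref{corollario iniezione2} applies verbatim, coordinatewise: for $\{(i_1,\dots,i_k)\}$ a chain the maximum of $(M_k,\leqslant^\sigma)$ is a singleton, forcing $|\max(I)|=1$.

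So the first half reduces to: $[c_n]^{t_n}_I(\F)\neq\varnothing$ forces $|\max(I)|=1$, say $\max(I)=\{(a_1,\dots,a_n)\}$, and then $I=(a_1,\dots,a_n)^\downarrow$ is the principal ideal, whose nonemptiness (by Corollary \ref{corollario ideale principale}, since a singleton is always representable) is automatic. The real content is therefore the second half: characterizing which tuples $(a_1,\dots,a_n)\in[n]^n$ arise as $\max(I)$ for a genuine $t_n$-flag $(V_1,\dots,V_n)$ over $\F$ — i.e.\ which tuples are ``representable'' in this strong sense — and showing this is exactly the set of dual parking functions. Here I would argue as follows. Given $(V_1,\dots,V_n)\in\mathrm{Fl}_{t_n}(\F)$, each $V_i$ is a line, say spanned by $v_i=\sum_j c_{ij}e_j$, and the $Q$-Schubert cell of $\phi_{t_n}(V_1,\dots,V_n)$ in $\mathbb{P}(\bigotimes c_n)$ is the orbit under $\pi(I^*(c_n;\F))$ (upper-triangular matrices); by Theorem \ref{pspazio} applied to each tensor factor $c_n$, the orbit of the line $[v_i]$ under upper-triangular matrices is $Q_{I_i}$ where $I_i=(\max_j\{j: c_{ij}\neq 0\})^\downarrow$. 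Hence $\max(I)=(m_1,\dots,m_n)$ with $m_i=\max\{j:c_{ij}\neq 0\}$, the ``pivot going the other way'' for the row-reduced-from-the-bottom form of the matrix $(c_{ij})$. Since the matrix is invertible (the $v_i$ are independent), row reduction from the bottom shows $(m_1,\dots,m_n)$ is a rearrangement dominating some permutation $\sigma\in S_n$ coordinatewise from above — precisely the dual parking function condition $(m_1,\dots,m_n)\geqslant_{c_n}(\sigma(1),\dots,\sigma(n))$. Conversely, given a dual parking function, one builds an explicit invertible matrix with the prescribed bottom-pivots (e.g.\ from the witnessing $\sigma$, place $1$'s at positions $(i,\sigma(i))$ and fill lower-index entries freely), giving a $t_n$-flag realizing $I$.

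The main obstacle I expect is the bookkeeping in this last step: making precise the claim that the ``bottom pivots'' of an invertible matrix, collected as a tuple, are exactly the dual parking functions, and that they are realizable by a flag. The cleanest route is a Gaussian-elimination argument performed with upper-triangular operations acting on columns from the top (equivalently row operations from the bottom): one shows by induction on $n$ that after such reduction one can bring $(c_{ij})$ to a form where row $i$ has its last nonzero entry in column $m_i$ and these are ``staircase-compatible'' with a permutation, while never decreasing any $m_i$; extracting the resulting permutation $\sigma$ with $m_i\geq\sigma(i)$ gives the dual parking function, and reading the construction backwards gives realizability. One should also double-check the (easy but necessary) point that the Gale-order maximum condition defining a $t_n$-flag matroid is automatically met for a principal ideal, so that the forward direction genuinely collapses to $|\max(I)|=1$; this follows because a principal order ideal of $\prod c_n$ is itself a sublattice isomorphic to $\prod_i (m_i^\downarrow)$, each factor a chain, whose coordinatewise maxima under any $\leqslant^\sigma$ are unique.
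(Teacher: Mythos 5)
Your plan follows essentially the same route as the paper: reduce to principal ideals because a $t_n$-flag matroid must have a unique $\leqslant^e$-maximum, note that a $t_n$-flag is encoded by an invertible matrix with columns $v_1,\dots,v_n$, identify the cell containing $\phi_{t_n}(V_1,\dots,V_n)$ by the tuple of top support indices $a_i=\max\{j:(v_i)_j\neq 0\}$ (the support of a pure tensor is a product set, so its unique maximal element is $(a_1,\dots,a_n)$ --- note the cell is the $I^*((c_n)^{t_n};\F)$-orbit, not the $\pi(I^*(c_n;\F))$-orbit of the point, though your conclusion is unaffected), and then match invertibility with domination of a permutation. Two details deserve tightening. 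For the forward direction you do not need any Gaussian elimination: invertibility of $M(v)$ means some term $(v_1)_{\sigma(1)}\cdots(v_n)_{\sigma(n)}$ of the determinant is nonzero, so $\sigma(i)\leqslant a_i$ for all $i$, which is already the dual parking function condition. For the converse, your construction ``place $1$'s at positions $(i,\sigma(i))$ and fill lower-index entries freely'' does not, as stated, put the point in the cell indexed by $I$: nothing forces the top nonzero entry of column $i$ to sit exactly in row $a_i$, so the support could generate a smaller (or unintended) ideal, and filling entries arbitrarily does not by itself keep track of both invertibility and cell membership. The paper resolves both at once by taking
\begin{equation*}
v_i \;=\; \sum_{\sigma(i)\leqslant j\leqslant a_i} e_j ,
\end{equation*}
so each column has lowest nonzero entry in row $\sigma(i)$ (after permuting columns the matrix is triangular with unit diagonal, hence invertible) and highest nonzero entry in row $a_i$ (so the support generates exactly $I$). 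With these two repairs your argument coincides with the paper's proof.
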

\begin{proof}
Notice that subsets of $[n]^{t_n}$ represented by $t_n$-flags coincide with representable $t_n$-flag matroids.
 By Remark \ref{oss Q^P Gale}, only principal order ideals have to be considered in Theorem \ref{teorema P-matroidi bandiera}, for the other ones have more than one maximal element.

  A representable $t_n$-flag matroid over $\F$ is represented by an element $v:=v_1 \otimes \ldots \otimes v_n \in \bigotimes\limits_{i=1}^n \F^n$,
  such that $v_1,\ldots,v_n$ are linearly independent or, equivalently, by a matrix $M(v) \in \gl(n,\F)$ with columns $v_1,\ldots,v_n$.
  Therefore $$v=\sum \limits_{i \in [n]^n}\left((v_1)_{i_1}\cdots (v_n)_{i_n} \right)e_{i_1} \otimes \ldots \otimes e_{i_n},$$
  where $(v_i)_j\in \F$ is the $j$-th component of the vector $v_i$, for all $i,j\in [n]$. It is clear that, since $M(v)\in  \gl(n,\F)$, there exists $\sigma \in S_n$ such that $(v_1)_{\sigma(1)}\cdots (v_n)_{\sigma(n)} \neq 0$.

  If $[c_n]^{t_n}_I(\F)\neq \varnothing$ then, by our previous considerations and Theorem \ref{teorema P-matroidi bandiera}, there exists $\sigma \in S_n$ such that
 $(\sigma(1),\ldots,\sigma(n)) \in I$. This implies $\max(I)\geqslant_{c_n^n} (\sigma(1),\ldots,\sigma(n))$; so $\max(I)$ is a dual parking function.

 On the other hand, if $\max(I)=a$ is a dual parking function then $a\geqslant_{c_n^n} (\sigma(1),\ldots,\sigma(n))$ for some $\sigma \in S_n$
 and the vector
 \begin{eqnarray*}
   v &:=& \sum\limits_{(\sigma(1),\ldots,\sigma(n)) \leqslant_{c_n^n} b \leqslant_{c_n^n} a} e_{b_1}\otimes \ldots \otimes e_{b_n} \\
    &=& \left(\sum \limits_{\sigma(1) \leqslant i_1 \leqslant a_1} e_{i_1}\right) \otimes \ldots \otimes \left(\sum \limits_{\sigma(n) \leqslant i_n \leqslant a_n} e_{i_n}\right)
 \end{eqnarray*}
 represents over $\F$ a $t_n$-flag matroid, since the matrix $M(v)$ is equivalent to an invertible upper triangular matrix. Then the condition of Theorem \ref{teorema P-matroidi bandiera} is satisfied.
\end{proof}

\begin{oss}
  The $t_n$-flag space has been stratified by permutations in \cite{A flag variety for the Delta Conjecture} by gluing the orbits of the left action of the group of lower triangular matrices (more in general the varieties $X_{n,k}$ studied there have been stratified by \emph{Fubini words}, which reduce to permutations when $k=n$).
\end{oss}

\begin{oss}
 An incidence stratification of $\mathrm{Fl}_{t_n}\left(\F\right)$ made of parking function can be obtained by considering the action of the group of lower triangular matrices.
\end{oss}

\begin{oss}
  By Theorem \ref{proposizione parking}, an analog of Corollary \ref{corollario ideale principale} for $Q$-Schubert cells of $\pfl(\F)$ does not hold.
\end{oss}

\begin{cor} \label{corollario parking}
  The poset $((c_n)^{t_n}_B(\F),\subseteq)$ has cardinality $(n+1)^{n-1}$ and it is graded, with rank function $\rho(a):=\sum \limits_{i=1}^n(a_i-i)$, for any dual parking function $a\in [n]^n$.
\end{cor}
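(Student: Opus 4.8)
The plan is to translate the statement into a purely combinatorial one about parking functions and then check both assertions on that model, so that the choice of $\F$ drops out immediately.

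The first step is to identify the poset. By Theorem \ref{proposizione parking}, an order ideal $I$ of $[n]^{t_n}=\prod_{i=1}^n c_n$ belongs to $(c_n)^{t_n}_B(\F)$ if and only if $I$ is principal and $\max(I)$ is a dual parking function over $n$. Since $J\mapsto \max(J)$ is a poset isomorphism from the principal order ideals of $\prod_{i=1}^n c_n$ (ordered by inclusion) onto $\bigl(\prod_{i=1}^n c_n,\leqslant_{c_n}\bigr)$ itself, restricting it gives a poset isomorphism between $(Q^P_B(\F),\subseteq)$ and the set of dual parking functions over $n$ equipped with the componentwise order $\leqslant_{c_n}$. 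From now on everything is about this last poset, which I will call $D_n$.

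For the cardinality, I would use the involution $a=(a_1,\dots,a_n)\mapsto (n+1-a_1,\dots,n+1-a_n)$ of $[n]^n$. One has $a\geqslant_{c_n}(\sigma(1),\dots,\sigma(n))$ if and only if $(n+1-a_i)_i\leqslant_{c_n}(n+1-\sigma(i))_i$, and $i\mapsto n+1-\sigma(i)$ is again a permutation of $[n]$; hence this map carries $D_n$ bijectively onto the set of parking functions over $n$. Therefore $|Q^P_B(\F)|=|D_n|$ is the number of parking functions over $n$, which equals $(n+1)^{n-1}$ (see \cite[Exercise 5.49]{StaEC2}). For the gradedness, I first note that $\rho(a)=\sum_i(a_i-i)$ is a nonnegative integer on $D_n$: if $a\geqslant_{c_n}\sigma$ then $\rho(a)=\sum_i(a_i-\sigma(i))\geqslant 0$, because $\sum_i\sigma(i)=\sum_i i$. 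The decisive observation is that the property of being a dual parking function is monotone for $\leqslant_{c_n}$: if $b\in D_n$ and $c\geqslant_{c_n} b$ then $c\in D_n$. This forces the covering relations of $D_n$ to be exactly the pairs $(a,a')$ where $a'$ is obtained from $a$ by increasing one entry by one: if $a<_{c_n} a'$ in $D_n$ and $a'$ is not of this form, pick $k$ with $a'_k>a_k$ and let $c$ be $a'$ with its $k$-th entry decreased by one; then $a\leqslant_{c_n} c\leqslant_{c_n} a'$ with $c\notin\{a,a'\}$ and $c\in D_n$ by monotonicity, so $a'$ does not cover $a$. Hence each cover raises $\rho$ by exactly $1$. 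Finally, the minimal elements of $D_n$ are precisely the permutations of $[n]$ (if $a\in D_n$ is not a permutation, a witnessing permutation $\sigma$ satisfies $\sigma<_{c_n}a$, so $a$ is not minimal; conversely a permutation $\sigma$ is minimal, since any $b<_{c_n}\sigma$ in $D_n$ would dominate some permutation $\tau$, giving $\sum\tau(i)<\sum\sigma(i)$, which is impossible), and every such $\sigma$ has $\rho(\sigma)=0$. Thus $\rho$ is a rank function and $(Q^P_B(\F),\subseteq)$ is graded with rank function $\rho$; moreover it has maximum $(n,\dots,n)$, of rank $\binom{n}{2}$.

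I expect no genuinely hard step here. The only point requiring more than a one-line remark is the exact description of the covering relations, and the key input making it routine is the monotonicity of the dual-parking-function condition under $\leqslant_{c_n}$, which automatically places the auxiliary element $c$ back in $D_n$.
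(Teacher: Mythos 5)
Your proposal is correct, and it follows the route the paper itself intends: the paper states Corollary \ref{corollario parking} without proof as an immediate consequence of Theorem \ref{proposizione parking}, identifying $(Q^P_B(\F),\subseteq)$ with the dual parking functions under the componentwise order, counting them via the complementation bijection with parking functions ($(n+1)^{n-1}$, as in \cite[Exercise 5.49]{StaEC2}), and checking gradedness of that order. Your write-up simply supplies the routine details (upward closure of the dual-parking-function condition, covers being single-coordinate increments, permutations as the rank-zero minimal elements), all of which are accurate.
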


\section{Acknowledgements}

The first author was partially supported by Swiss National Science Foundation Professorship grant PP00P2\_179110/1 of Prof. Emanuele Delucchi.

He is grateful to the town of Zagarolo, where this paper  started and finished, for the hospitality received there.

\end{document}